\documentclass[a4paper]{amsart}
\usepackage{amssymb, amsmath, amsthm}
\usepackage{stmaryrd, esint}
\usepackage{graphicx}
\usepackage{wrapfig}
\DeclareGraphicsExtensions{.png}

\newtheorem{theorem}{Theorem}[section]
\newtheorem{lemma}[theorem]{Lemma}

\newtheorem{corollary}[theorem]{Corollary}

\theoremstyle{definition}
\newtheorem{definition}{Definition}[section]
\newtheorem{example}[definition]{Example}

\theoremstyle{remark}
\newtheorem{remark}[definition]{Remark}

\numberwithin{equation}{section}

\newcommand{\abs}[1]{\lvert#1\rvert}
\newcommand{\Abs}[1]{\left\lvert#1\right\rvert}
\newcommand{\norm}[1]{\left\lVert#1\right\rVert}
\newcommand{\tr}[1]{\bigr\vert_{\scriptscriptstyle{#1}}}
\newcommand{\A}{\mathcal{A}}

\newcommand{\F}{\mathcal{F}}
\newcommand{\G}{\mathcal{G}}
\newcommand{\h}{\mathcal{H}}

\newcommand{\R}{\mathbb{R}}
\newcommand{\Sp}{\mathcal{S}}
\newcommand{\C}{\mathbb{C}}
\newcommand{\MN}{\mathcal{N}}
\newcommand{\N}{\mathbb{N}}
\newcommand{\Z}{\mathbb{Z}}

\DeclareMathOperator{\reg}{reg}
\DeclareMathOperator{\sing}{sing}

\DeclareMathOperator{\Div}{div}
\DeclareMathOperator{\dist}{dist}
\title[Partial H\"older continuity]{Partial H\"older continuity for $Q$-valued energy minimizing maps}

\author[J.Hirsch]{Jonas Hirsch}

\begin{document}

\begin{abstract}
We consider multivalued maps between $\Omega \subset \R^N$ open ($N \ge 2$) and a smooth, compact Riemannian manifold $\MN$ locally minimizing the Dirichlet energy. An interior partial H\"older regularity result in the spirit of R.~Schoen and K.~Uhlenbeck is presented. Consequently a minimizer is H\"older continuous outside a set of Hausdorff dimension at most $N-3$. \\
F.~Almgren's original theory includes a global interior H\"older continuity result if the minimizers are valued into some $\R^m$. It cannot hold in general if the target is changed into a Riemannian manifold, since it already fails for "classical" single valued harmonic maps.
\end{abstract}

\maketitle

\section*{Introduction}
\label{sec_H:introduction}
Multivalued maps with focus on Dirichlet integral minimizing maps have been introduced by F.~Almgren in his fundamental work \cite{Almgren}. C.~De Lellis and E.~Spadaro gave a modern revision of it in \cite{Lellis}. 
He introduced them as $Q$-valued functions. $Q \in \N$, fixed, indicates the number of values the function takes, counting multiplicity. We will refer to them from now on as $Q$-valued functions. Their purpose had been the development of a proof of a regularity result on area minimizing rectifiable currents. The author recommends \cite{Lellis note} for a motivation of their definition, an overview of Almgrens program. Furthermore it compares different modern approachs to $Q$-valued functions inspired for instance by a metric analysis and surveys some recent contributions. A complete modern revision of Almgrens original theory and results can be found in \cite{Lellis}.\\
Almgrens original Theory focuses on euclidean ambient spaces. Some of his results had been extended: \cite{Lellis select}, \cite{Goblet09}, \cite{Zhu06Fre}, \cite{dePauw} consider maps into more general ambient spaces like, metric, Banach and Hilbert spaces; \cite{Mattila83}, \cite{Lellis09} analyse a wider range of energy functionals. \cite{Zhu06An}, \cite{Zhu06flow} study different geometric objects in the $Q$-valued setting, such as geometric flows.\\
As mentioned in the abstract the H\"older continuity in the interior for minimizers is an outcome of Almgren's original work. Furthermore a minimizer $u \in W^{1,2}(\Omega, \A_Q(\R^m))$ is the "superposition" of "classical", single valued harmonic functions outside a singular set $\sing(u)$, with Hausdorff dimension not exceeding $N-2$ in the following sense:\\
$y \notin \sing(u)$, $\exists U_y \subset \Omega$ open neighborhood of $y$, $u_i: U_y \to \R^m (i=1, \dotsc, Q)$ harmonic with $u(x)=\sum_{i=1}^Q \llbracket u_i(x) \rrbracket \forall x \in U_y$. Moreover $u_i(x) = u_j(x)$ or else $u_i(x)\neq u_j(x)$ for all $x \in U_y$ and $i,j \in  \{1, \dotsc, Q \}$.\\
Almgrens regularity result has beend exteded as well in several directions. \cite{Lellis} sharpens the estimate on the singular set for planar Dirichlet minimizers, \cite{Mattila83} proves H\"older continuity for planar maps minimizing quadratic semicontinuous functionals and \cite{GobletZhu08} proof the H\"older continuity for Dirichlet almost minimizers.\\

The aim of this note is to extend the theory of harmonic maps from the single valued to the multivalued equivalent i.e. $Q$-valued maps into a smooth, compact Riemannian manifold locally minimizing the the Dirichlet integral. The interior H\"older regularity for single valued minimizing harmonic maps has been known since the work of R.~Schoen and K.~Uhlenbeck, \cite{Schoen}. In this note we give an interior partial H\"older-regular result for multivalued maps minimizing locally the Dirichlet energy. Our strategy is inspired by the methods of S.~Luckhaus, see \cite{Luckhaus}.  We are able to show:
\begin{theorem}\label{theo:0.1}
There is a constant $\alpha=\alpha(N,Q)>0$ with the property that, if $\Omega \subset \R^N (N\ge 2)$ is open, $\MN \subset \R^m$ is a smooth compact $n$-dimensional Riemannian sub-manifold and $u \in W_{loc.}^{1,2}(\Omega, \A_Q(\MN))$ is locally minimizing the Dirichlet energy, then there exists $\Omega'\subset \Omega$ open, such that $u \in C^{0,\alpha}(\Omega')$ and $\Omega\setminus \Omega'$ has at most Hausdorff dimension $N-3$.
\end{theorem}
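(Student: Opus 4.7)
The plan is to run the Schoen--Uhlenbeck/Luckhaus $\varepsilon$-regularity scheme, adapted to the $\A_Q(\MN)$-valued setting, and combine it with a Federer-type dimension reduction based on the energy monotonicity. The first ingredient is the monotonicity formula: using outer/inner variations along radial vector fields, which is licit because $u$ is a local minimizer, one shows that the normalized Dirichlet density $\Theta_u(x,r) := r^{2-N}\int_{B_r(x)}|Du|^2$ is non-decreasing in $r$ for every $x\in\Omega$. Define $\Omega'$ to be the set of $x$ at which $\lim_{r\to 0^+}\Theta_u(x,r)<\varepsilon_0^2$, for a threshold $\varepsilon_0=\varepsilon_0(N,Q,\MN)$ determined below. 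Upper semicontinuity of $\Theta_u(\cdot,r)$ shows that $\Omega'$ is open.

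The core step is the $\varepsilon$-regularity theorem: if $\Theta_u(x_0,r)\le \varepsilon_0^2$, then $u\in C^{0,\alpha}(B_{r/2}(x_0))$ with a uniform exponent $\alpha=\alpha(N,Q)$. I would prove this via an energy decay $\Theta_u(x_0,\theta r)\le \tfrac12\Theta_u(x_0,r)$ for some fixed $\theta\in(0,1)$. On a Fubini-good sphere $\partial B_\rho(x_0)$, Poincar\'e on $S^{N-1}$ forces the trace $u|_{\partial B_\rho}$ to lie in $L^2$-distance $O(\varepsilon_0^{1/2})$ of a constant $T\in\A_Q(\MN)$. I would then build a comparison map $v$ on $B_\rho(x_0)$ in two pieces: in a concentric ball $B_{(1-\sigma)\rho}(x_0)$, take the $\A_Q(\R^m)$-valued Dirichlet minimizer $w$ with trace $u|_{\partial B_\rho}$, which by Almgren's interior H\"older theory for $\R^m$-valued $Q$-minimizers satisfies a Campanato-type energy decay; in the thin annulus $B_\rho\setminus B_{(1-\sigma)\rho}$ Luckhaus-interpolate between $w$ and $u$, then apply a nearest-point retraction (lifted sheet-by-sheet) onto $\MN$ to obtain an admissible $\A_Q(\MN)$-valued competitor. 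Minimality of $u$ tested against $v$, optimized in the annulus thickness $\sigma$, then yields the desired decay, which by Campanato's characterization adapted to $\G$-valued maps gives the H\"older modulus on $B_{r/2}(x_0)$.

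The main obstacle is the $Q$-valued Luckhaus lemma for $\A_Q(\MN)$-valued traces: given $u_1,u_2\in W^{1,2}(S^{N-1},\A_Q(\R^m))$ both near $\A_Q(\MN)$, construct an interpolation on $S^{N-1}\times[0,\sigma]$ whose Dirichlet energy is bounded by $\sigma^{-1}\int\G(u_1,u_2)^2+\sigma\int(|Du_1|^2+|Du_2|^2)$ and whose values remain in a tubular neighborhood where the projection onto $\A_Q(\MN)$ is Lipschitz. The non-manifold structure of $\A_Q(\MN)$ along its diagonals prevents a globally smooth nearest-point retraction; the natural way around this, which I expect the author to pursue, is to perform the interpolation after Almgren's bi-Lipschitz embedding $\boldsymbol\xi\colon\A_Q(\R^m)\hookrightarrow\R^M$, where convex linear interpolation is available, and then retract back via the Lipschitz extension $\boldsymbol\rho$. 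Closing the retraction error requires $\varepsilon_0$ small enough that the image of the annulus under $\boldsymbol\xi$ stays inside the tubular neighborhood of $\boldsymbol\xi(\A_Q(\MN))$ on which $\boldsymbol\rho$ is well controlled; this is the key new estimate and the place where the manifold target $\MN$ (as opposed to $\R^m$) genuinely enters.

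Once the $\varepsilon$-regularity is established, the Hausdorff bound $\dim_{\h}(\Omega\setminus\Omega')\le N-3$ follows by the standard Federer dimension-reduction argument. A sequence of blow-ups $u_{x_k,r_k}(y):=u(x_k+r_ky)$ at a point of $\Omega\setminus\Omega'$ subconverges in $W^{1,2}_{\mathrm{loc}}$ to a $0$-homogeneous Dirichlet minimizer valued into $\A_Q(\R^n)$, where $\R^n\cong T_{p}\MN$ is the tangent space at a chosen sheet value; Almgren's planar regularity then rules out non-trivial $0$-homogeneous minimizers on $\R^2$, and iterated tangent-map reduction in the spirit of Schoen--Uhlenbeck yields $\dim_\h(\Omega\setminus\Omega')\le N-3$, completing the proof.
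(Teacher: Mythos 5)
Your overall architecture (monotonicity, an $\epsilon$-regularity step with a Luckhaus-type interpolation controlled in $L^\infty$, then Federer dimension reduction) matches the paper, but the core decay step as you describe it has two genuine gaps. First, testing minimality of $u$ against a competitor built from the $\A_Q(\R^m)$-valued Dirichlet minimizer $w$ with trace $u\tr{\partial B_\rho}$ only yields $E(u,B_\rho)\le (1+o(1))E(w)+\text{annulus error}\le (1+o(1))E(u,B_\rho)+\dots$, i.e.\ nothing: to convert the Campanato decay of $w$ into decay for $u$ at scale $\theta\rho$ you must control $E(u,B_{\theta\rho})$ by $E(w,B_{\theta\rho})$ plus errors, and the single-valued route to this (the Pythagoras identity $\int|D(u-w)|^2=\int|Du|^2-\int|Dw|^2$ for harmonic $w$) is unavailable since $\A_Q$ has no linear structure. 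Second, the sheet-wise nearest-point projection $\Pi\circ w$ need not be defined: small energy gives only $L^2$-closeness of the trace to a mean in $\A_Q(\MN)$, and with merely $W^{1,2}$ boundary data on $\Sp^{N-1}$ ($N\ge 3$) the minimizer $w$ is neither continuous up to the boundary nor subject to a pointwise maximum principle, so near $\partial B_{(1-\sigma)\rho}$ its sheets can leave the tubular neighbourhood $\MN_d$; likewise there is no tubular-neighbourhood retraction onto $\boldsymbol{\xi}(\A_Q(\MN))$, since that set is not a $C^2$ manifold — only the sheet-wise $\Pi$ is available, and it needs a pointwise distance bound. The paper resolves both issues at once by proving the decay by contradiction: rescaling by $\sigma_k^2=E(u(k),B_1)\to 0$, a concentration-compactness lemma splits $\sigma_k^{-1}u(k)$ into clusters converging to Dirichlet minimizers $a_l$ into $\A_{Q_l}(\R^m)$ with \emph{strong} energy convergence (this replaces Pythagoras), and competitors for the $a_l$ are truncated at scale $\sigma_k^{-1/2}$ so that after scaling back they lie within distance $\sqrt{L\sigma_k}$ of $\MN$, making the sheet-wise projection legitimate with energy distortion $1+O(\sqrt{\sigma_k})$; Almgren's interior decay for the limits $a_l$ then gives the contradiction.

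A further error is in your last paragraph: at a point of $\Omega\setminus\Omega'$ the density satisfies $\Theta_u(y)\ge \epsilon_0>0$, so the blow-ups $u_{y,r_k}$ do \emph{not} linearize into a tangent space; by the compactness lemma they converge to a degree-zero homogeneous minimizer into $\A_Q(\MN)$, not into $\A_Q(\R^n)\cong\A_Q(T_p\MN)$. The dimension bound is then obtained as in the paper: $\h^{N-2}(\sing_H u)=0$ because $\sing_H u\subset\{\Theta_u\ge\epsilon_0\}$, a tangent map $\varphi$ with $\dim S(\varphi)\ge N-2$ would force $\h^{N-2}(\sing_H\varphi)=\infty$, hence $\sing_H u=S_{N-3}$, and Almgren's/Federer's reduction gives $\dim_\h(\sing_H u)\le N-3$. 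With these corrections your scheme would essentially become the paper's proof; as written, the decay mechanism and the admissibility of the projected competitor are not established.
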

Theorem \ref{theo:0.1} has been proved in \cite{Zhu06reg} for the special case $\MN=\Sp^n$; we use here a different approach to obtain the suitable energy decay, which allows us to cover the case of a general target manifold.\\
For single valued harmonic maps one has the following sharper result: if $\Omega, \MN$ as above and $v \in W^{1,2}_{loc.}(\Omega, \MN)$ is locally Dirichlet minimizing, then $\exists \Omega'\subset \Omega$ with $dim_\h(\Omega\setminus\Omega') \le N-3$  and $v \in C^\infty(\Omega')$. The main difference is that the $C^\infty$ regularity for single valued maps is replaced by H\"older-regularity in the multivalued setting. Furthermore we want to mention that in the single valued case the result above can be sharpened when the target manifold satisfies some special structural assumptions. \\
A pressing open question in the $Q$-valued case is to give a more detailed description of the singular set in the interior of the H\"older regular set $\Omega'$ of theorem \ref{theo:0.1}: How small is the set $\sing(u) \cap \Omega'$ s.t. $u$ can be written as a "superposition" of "classical", single valued harmonic maps. One should compare it to the corresponding result of a minimizers $u$ mapping into $\A_Q(\R^m)$ mentioned above. Another possible extension is to consider maps minimizing the $p$-Dirichlet integral in the spirit of S.~Luckhaus \cite{Luckhaus}.\\

This article is organized as follows: after fixing some notation and definitions in section \ref{sec:definition_of_energy_minimizing_maps}, we extend the "classical" variational equations and monotonicity formula to the multivalued setting in section \ref{sec:the_variational_equations_and_monotonicity_formulas}. Section \ref{sec:some_classical_results_extended_to_q_valued_functions} collects some tools to derive a compactness result for minimizers in section \ref{sec:compactness_of_energy_minimizing_maps} and the interior partial H\"older-continuity result in section \ref{sec:e-Hoelder regularity lemma}. Section \ref{sec:properties_of_the_singular_set_sing_h_u_} uses the obtained to conclude the estimate on the size of the H\"older-singular-set following classical lines. The appendix contains in section \ref{sec_I:Q-valued functions} a recollection of the most basic definitions and results concerning $Q$-valued functions taking values in $\R^n$. The results are presented omitting the actual proofs. Section \ref{sec:concentration_compactness_for_q_valued_functions} presents a concentration compactness result. It is along the same lines and indeed inspired  by C.~De Lellis and E.~Spadaro's version \cite[Lemma 3.2]{Lellis Lp}. Finally section \ref{sec:ALuckhaus}, contains an intrinsic proof to the "classical" Luckhaus' lemma concerning the extension of a map Sobolev map defined on the boundary of an annulus $\partial (B_1\setminus B_{1-\lambda})$ into the interior. The concentration compactness result and the Luckhaus' lemma provide the essential tools for the proof of theorem \ref{theo:0.1}.
\section*{Acknowledgments}
A big thank you to the many people who shared their insights and recommendations with me throughout this project. you to everyone who assisted me with conversations, new ideas, and writing their own insights in various books and articles. In particular I want to express my sincere thanks to Camillo De Lellis introducing me to F. Almgren’s $Q$-valued functions and suggesting this project to me.

\tableofcontents

\section{Definition of energy minimizing maps} 
\label{sec:definition_of_energy_minimizing_maps}
Suppose $\Omega \subset \R^N$ open, $N\ge 2$ and $\MN$ is a smooth compact $n-$dimensional Riemannian manifold isometrically embedded in some $\R^m$.

We assume for now that the reader is familiar with most general definitions and results in the theory of $Q$-valued functions. mainly the notation and terminology introduced by C.~De Lellis and E.~Spadaro in \cite{Lellis}. It differs slightly from Almgren's original one. The most basic definitions and results needed are recalled in the appendix, \ref{sec_I:Q-valued functions}, omitting the proofs.\\
For now we recall that $\left(\A_Q(\R^m), \G \right)$ denotes the metric space of unordered $Q$-tuples of points in $\R^m$. For a domain $\Omega \subset \R^N$ the Sobolev space of $Q$-valued functions is $W^{1,2}(\Omega, \A_Q(\R^m))$. The related Sobolev "semi-norm" or Dirichlet energy for $u \in W^{1,2}(\Omega, \A_Q(\R^m))$ is $\int_{\Omega} \abs{Du}^2$.\\

Following this notation $\A_Q(\MN)$ denotes corresponding to $\A_Q(\R^n)$ classical metric space of unordered $Q-$tuples taking values in $\MN$ instead of the whole $\R^n$. 

\begin{definition}\label{def:1.1}
\begin{itemize}
	\item[(i)] $W^{1,2}_{loc}(\Omega, \A_Q(\MN))$ is the set of $u \in W^{1,2}_{loc}(\Omega, \A_Q(\R^m))$ s.t. $u(x)\in \A_Q(\MN)$ for a.e. $x \in \Omega$. Since $\MN$ is assumed to be compact we have $W^{1,2}_{loc}(\Omega, \A_Q(\MN)) \subset L^\infty(\Omega, \A_Q(\R^m))$.
	\item[(ii)] For any $\overline{B_R(y)} \subset \Omega$ we define the energy $E(u, B_R(y))$ as
	\begin{equation}\label{eq:1.1}
		E(u,B_R(y))= R^{2-N} \int_{B_R(y)} \abs{Du}^2.
	\end{equation}
	\item[(iii)] We call $u \in W_{loc}^{1,2}(\Omega, \A_Q(\MN))$ a \emph{local minimizer}, or just a \emph{minimizer},  if for any $\overline{B_R(y)}\subset \Omega$ and $v \in W_{loc}^{1,2}(\Omega, \A_Q(\MN))$ satisfying $u = v$ in a neighbourhood of $\partial B_R(y)$ we have
	\begin{equation*}
		E(u,B_R(y))\le E(v,B_R(y)).
	\end{equation*}
\end{itemize}
\end{definition}

The already mentioned interior H\"older continuity result, (c.p. with \cite[Theorem 0.9]{Lellis}): for local minimizers $u \in W_{loc}^{1,2}(\Omega, \A_Q(\R^n))$ taking values in some $\R^m$ is:

\begin{theorem}[interior H\"older continuity]\label{theo_I:1.103}
There is a constant $\alpha_0=\alpha_0(N,Q)>0$ with the property that if $u \in W^{1,2}(\Omega, \A_Q(\R^n))$ is (locally) Dirichlet minimizing, then $u\in C^{0,\alpha_0}(K, \A_Q(\R^n))$ for any $K\subset \Omega\subset \R^N$ compact. Indeed, $\abs{Du}$ is an element of the Morrey space $L^{2,N-2-2\alpha_0}$ with the estimate
\begin{equation}\label{eq_I:1.103}
	r^{2-N-2\alpha_0} \int_{B_r(x)} \abs{Du}^2 \le R^{2-N-2\alpha_0} \int_{B_R(x)} \abs{Du}^2 \text{ for } r\le R, B_R(x) \subset \Omega.
\end{equation}
For two-dimensional domains $\alpha_0(2,Q)=\frac{1}{Q}$ is explicit and optimal.
\end{theorem}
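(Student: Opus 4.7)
The plan is to establish the Morrey-type energy decay \eqref{eq_I:1.103} directly; H\"older continuity with exponent $\alpha_0$ then follows from a Campanato-type characterization adapted to the metric target $(\A_Q(\R^n),\G)$, since the decay controls the $L^2$-mean oscillation of $u$ on every ball and telescoping produces a pointwise H\"older bound. The decay itself I would obtain in two stages: first a monotonicity formula, then a quantitative strict-decay improvement.

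For monotonicity I would use the $0$-homogeneous extension as competitor. For $B_R(y)\subset\Omega$ and a.e.\ $r\in(0,R)$, a slicing argument shows $u|_{\partial B_r(y)}\in W^{1,2}(\partial B_r,\A_Q(\R^n))$, so that $v(x):=u\bigl(y+r(x-y)/|x-y|\bigr)$ is an admissible competitor on $B_r(y)$. For $N\ge 3$ a polar-coordinate computation gives
\[ \int_{B_r(y)}|Dv|^2 \;=\; \frac{r}{N-2}\int_{\partial B_r(y)}|D_\tau u|^2 \;\le\; \frac{r}{N-2}\int_{\partial B_r(y)}|Du|^2, \]
and minimality combined with the coarea formula yields $\frac{d}{dr}\bigl(r^{2-N}\int_{B_r}|Du|^2\bigr)\ge 0$, i.e.\ $r\mapsto E(u,B_r(y))$ is non-decreasing. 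For $N=2$ the homogeneous extension has logarithmically divergent energy, so one replaces it by a linear interpolation between the trace and a constant $Q$-point on a small inner ball.

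To upgrade monotonicity to strict decay, I claim there exist $\theta\in(0,1)$ and $\delta>0$, depending only on $N$ and $Q$, with $E(u,B_\theta(y))\le(1-\delta)\,E(u,B_1(y))$ for every minimizer. Arguing by contradiction, a failing sequence of minimizers $u_k$ with $E(u_k,B_1)=1$ and $E(u_k,B_{2^{-k}})\to 1$ would, via the strong $W^{1,2}_{loc}$-compactness of $\A_Q$-valued Dirichlet minimizers from \cite{Lellis}, produce a limit minimizer $u_\infty$ saturating the monotonicity formula on every shell, hence $0$-homogeneous from the origin. I then need the rigidity statement that any non-constant such $u_\infty$ has a spherical trace with a quantitative spectral gap. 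For $N=2$ this trace is a closed loop in $\A_Q(\R^n)$; its lift to $\R^{Qn}$ via multi-valued unfolding is $2\pi Q$-periodic, so its lowest nonzero Fourier frequency is $\ge 1/Q$, and an explicit ``harmonic'' competitor built from the Fourier series of the unfolded trace (contributing a radial factor $\rho^{1/Q}$ from the lowest mode) gives the sharp exponent $\alpha_0(2,Q)=1/Q$. For $N\ge 3$, slicing by affine $2$-planes and invoking the planar case produces $\alpha_0(N,Q)>0$.

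Dyadic iteration $E(u,B_{\theta^k R})\le(1-\delta)^k E(u,B_R)$ then yields \eqref{eq_I:1.103} with $\alpha_0=-\log(1-\delta)/(2\log(1/\theta))$. The main obstacle is the rigidity/compactness step in the metric target: strong $W^{1,2}$-convergence of minimizers to a minimizer cannot be produced by convex interpolation of competitors, since $\A_Q(\R^n)$ is not a linear space, so one must appeal to the specific compactness for $\A_Q$-valued Dirichlet minimizers from \cite{Lellis}; and the spherical-harmonic analysis of $Q$-valued maps on $S^{N-1}$ requires careful bookkeeping through multi-valued unfolding to compensate for the absence of linear structure on $\A_Q(\R^n)$.
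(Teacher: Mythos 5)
Note first that the paper does not prove Theorem \ref{theo_I:1.103} at all: it is quoted from Almgren's work as revised by De Lellis--Spadaro \cite{Almgren}, \cite{Lellis}, so your attempt must be measured against those proofs. Your skeleton (monotonicity via the cone competitor, a universal decay factor, dyadic iteration, a Campanato-type conclusion) is a legitimate one, and the planar unfolding idea is indeed the mechanism behind $\alpha_0(2,Q)=\tfrac1Q$. But there is a genuine gap exactly where the theorem is hard, namely $N\ge 3$, which you dispatch with ``slicing by affine $2$-planes and invoking the planar case''. This does not work: the restriction of a Dir-minimizer --- or of the $0$-homogeneous blow-up limit $u_\infty=\varphi(x/\abs{x})$ produced by your compactness argument --- to a $2$-plane is not a planar Dir-minimizer, so the planar theorem cannot be invoked on slices; nor can one extract a contradiction from energy finiteness, because in the Fubini/polar decomposition the planar energies of the slices carry the weight $\abs{x}^{N-2}$, which exactly cancels the logarithmic divergence that a nonconstant $0$-homogeneous planar map would exhibit. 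What is needed at this point is precisely the hard quantitative input of Almgren and De Lellis--Spadaro: a competitor for the trace on $\partial B_r$ with energy at most $C(N,Q)\,r\int_{\partial B_r}\abs{D_\tau u}^2$ and $C(N,Q)$ strictly below $\tfrac1{N-2}$ (equivalently, the nonexistence of nonconstant $0$-homogeneous $Q$-valued minimizers into $\R^n$). Your proposal contains no construction for this, so the contradiction argument never closes for $N\ge3$, and with it the whole theorem.

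Two further points need repair even where the outline is sound. First, the compactness with convergence of energies you invoke from \cite{Lellis} must be the version proved via the interpolation/Luckhaus-type lemma (the analogue of Lemma \ref{lem:4.1} via Lemma \ref{lem:3.101} in this paper), not anything downstream of the H\"older theorem, otherwise the argument is circular; this is fixable but has to be said. Second, in two dimensions the Fourier/unfolding competitor you describe, inserted into the natural comparison $\int_{B_r}\abs{Du}^2\le Q\,r\int_{\partial B_r}\abs{D_\tau u}^2\le Q\,r\,\frac{d}{dr}\int_{B_r}\abs{Du}^2$, only yields the exponent $\tfrac1{2Q}$; to reach the sharp $\tfrac1Q$ you must additionally use the inner-variation (equipartition) identity, i.e.\ \eqref{eq:2.105} with $N=2$, which gives $\int_{\partial B_r}\abs{\partial_r u}^2=\int_{\partial B_r}\abs{D_\tau u}^2$ and hence the constant $\tfrac{Q}{2}$ in the differential inequality; and the optimality statement still requires the example $x\mapsto\sum_{z^Q=x}\llbracket z\rrbracket$, which you do not mention.
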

As mentioned it had been proven first by Almgren in \cite{Almgren} and nicely reviewed by C. De Lellis and E. Spadaro in \cite{Lellis}.\\

We want to study the regularity of energy minimizing maps taking values in the smooth compact $n-$dimensional Riemannian manifold $\MN$. For this purpose we define the regular and singular set.
\begin{definition}\label{def:1.2}
A $Q$-valued map $u\in W^{1,2}_{loc}(\Omega, \A_Q(\R^m))$ is called regular at a point $y \in \Omega$ if there exists a neighborhood $U$ of $y$ and $Q$ smooth maps $u_i: U \to \R^m$ s.t.
\begin{equation*}
	u(x)= \sum_{i=1}^Q \llbracket u_i(x) \rrbracket \text{ for a.e. } x \in U
\end{equation*}
and either $u_i(x)\neq u_j(x)$ for all $x\in U$ or $u_i \equiv u_j$.\\
We define the open set
\begin{equation}\label{eq:1.2}
	\reg u =\left\{ y \in \Omega \,:\, y \text{ is a regular point of $u$ } \right\}. 
\end{equation}
The singular set of $u$ is the relative closed set $\sing u = \Omega \setminus \reg u$.
\end{definition}

\begin{remark}\label{rem:1.3}
If $y \in \reg u$ and $u(x)= \sum_{i=1}^Q \llbracket u_i(x) \rrbracket$ on a neighborhood $U$ each $u_i: U \to \MN$ has to be a smooth energy minimizing i.e. a harmonic map in the classical sense.  
\end{remark}

For our purpose it is helpful to define a certain subset of the singular set.

\begin{definition}\label{def:1.4}
If $u\in W^{1,2}_{loc}(\Omega, \A_Q(\R^m))$ then the H\"older regular set of $u$ is
\begin{equation}\label{eq:1.3}
	\reg_H u =\{ y \in \Omega \,:\, u \text{ is H\"older continuous in a neighborhood of } y \},
\end{equation}
and the H\"older singular set is $\sing_H u =\Omega \setminus \reg_H u$.
\end{definition}

Just by definition we have
\[
	\reg u \subset \reg_H u \text{ and } \sing_H u \subset \sing u.
\]
\begin{remark}\label{rem:1.5}
	If $\MN=\R^n$ then $\sing_H u =\emptyset$ or $\reg_H u = \Omega$ for any minimizing $u \in W^{1,2}(\Omega, \A_Q(\R^n))$ as a consequence of the internal H\"older regularity result on $\R^n$, e.g. \cite[Theorem 0.9]{Lellis}. $\sing_H u$ is not empty in general, since $\sing_H u$ is already known to be non-empty in certain cases of classical single-valued energy minimizing maps. 
\end{remark}


\section{The variational equations and monotonicity formulas} 
\label{sec:the_variational_equations_and_monotonicity_formulas}

Suppose $u \in W^{1,2}_{loc}(\Omega, \A_Q(\MN))$ is a energy minimizing map and $\overline{B_R(y)} \subset \Omega$. Suppose $\{u_t\}_{t \in ]-\delta, \delta[}$ is a $C^1$ family of maps in $W^{1,2}(B_R(y),\A_Q(\MN))$ s.t. $u_s=u$ in a neighborhood of $\partial B_R(y)$ for all $t$ and $u_0=u$ then due to minimality of $u$ we must have
\begin{equation}\label{eq:2.101}
	\frac{d}{dt}\Bigr\rvert_{t=0} E(u_t,B_R(y))=0.
\end{equation}
There two natural classes of variations, inner and outer once. 
\subsubsection{inner variations} 
\label{ssub:inner_variations}
Let $\Phi_t(x)= x + t X(x) +o(t)$ be the flow generated by an arbitrary vector field $X=(X^1, \dotsc, X^N) \in C^1_c(B_R(y),\R^N)$. Since $\Phi_t(x)=x$ close to $\partial B_R(y)$  $v_t(x)= u\circ \Phi^{-1}_t(x)$ defines a $C^1$-family of competitors to $u$. Standard calculations give 

\begin{align*}
D\Phi^{-1}_t \circ \Phi_t &= \left(D\Phi_t\right)^{-1}= \textbf{1} - t DX + o(t)\\
\det \left(D\Phi_t\right) &= 1 + t \Div(X) + o(t)
\end{align*}
( $\textbf{1}$ denotes the identity map on $\R^N$) so that 
\begin{align*}
\abs{Dv_t}^2\circ \Phi_t &= \sum_{l=1}^Q \abs{Du_l D\Phi_t^{-1}\circ \Phi_t}^2 = \sum_{l=1}^Q \abs{Du_l \left(\textbf{1} - t DX + o(t) \right)}^2\\
&= \sum_{l=1}^Q \abs{Du_l}^2 - 2 t \sum_{l=1}^Q \langle Du_l : Du_l DX \rangle + o(t).
\end{align*}
Integrating we get 
\begin{align*}
\int_{B_R(y)} \abs{Dv_t}^2 &= \int_{B_R(y)} \abs{Dv_t}^2 = \int_{B_R(y)} \abs{Dv_t}^2 \circ \Phi_t \,\abs{\det D\Phi_t} \\
&= \int_{B_R(y)} \abs{Du}^2 + t \int_{B_R(y)} \abs{Du}^2 \Div(X) - 2 \sum_{l=1}^Q \langle Du_l : Du_l DX \rangle + o(t).
\end{align*}
Because of \eqref{eq:2.101} we necessarily have 
\begin{equation}\label{eq:2.102}
	0 = \int_{B_R(y)} \left(\abs{Du}^2 \delta_{ij} - 2 \sum_{l=1}^Q \langle D_iu_l : D_ju_l \rangle\right) D_iX^j.
\end{equation}

Before we consider the second class, the outer variations, it is useful to set up some terminology and recall some facts about the nearest point projection.\\

$\MN_d=\{ x\,:\, \dist(x,\MN)<d\}$ defines a tubular neighbourhood around $\MN$ for any $d>0$. Given $p \in \MN$ and a vector $X\in \R^m$, $X^\top$ denotes the orthogonal projection of $X$ onto $T_p\MN$; hence $X^\bot= X-X^\top$ is the orthogonal projection onto the normal space $(T_p\MN)^\bot$ at $p$. $A_p(X_1^\top,X_2^\top)= (D_{X_1^{\top}}X_2^{\top})^\bot$ is the second fundamental form of $\MN$ at $p$ and any vector fields $X_1,X_2 \in C^(B_\epsilon(p), \R^m)$. 

\begin{remark}\label{rem:2.101}
Since $\MN$ is assumed to be a smooth compact manifold it has a nearest point projection $\Pi$. $\Pi$ is defined on some tubular neighbourhood $\MN_d, (d>0)$. Beside being a smooth map i.e. $\Pi \in C^\infty( \MN_d ; \MN)$ it has the following properties:
\begin{itemize}
	\item[(i)] $\abs{x-\Pi(x)}=\dist(x,\MN)<\abs{x-p}$ for all $x \in \MN_d$ and $p \in \MN\setminus \{\Pi(x)\}$;\\
	\item[(ii)] $D\Pi(p) X= X^\top$ for $p \in N$ and any vector $X\in \R^m$;\\
	\item[(iii)] for $p \in \MN$ and any vectors $X_i \in \R^N (i=1,2,3)$ 
	\begin{align}
		A_p(X_1^\top,X_2^\top)&= D^2\Pi(p)(X_1^\top,X_2^\top)\label{eq:2.001}\\
		X_1 D^2\Pi(p)(X_2,X_3) &=\sum_{\sigma \in \mathcal{P}_3} X_{\sigma(1)}^\bot D^2\Pi(p)( X_{\sigma(2)}^\top, X_{\sigma(3)}^\top)\label{eq:2.002}
	\end{align}
	\item[(iv)] for any $x \in \MN_d$ and any vector  $X \in \R^m$ we have
	\[
		\left(1 - 2 \operatorname{dist}(x,\MN) \norm{A_{\Pi(x)}}\right)\;\abs{D\Pi(x)X}^2 \le \abs{X}^2.
	\]
\end{itemize}	
\end{remark}

Although all of these are classical, we give their proofs expect for showing existence and smoothness of $\Pi$, that can be found for example in \cite[2.12.3 Theorem 1]{Simon}.
\begin{itemize}
	\item[(i)] is the defining property of $\Pi$ as nearest point projection.\\
	\item[(ii)] For $X \in \R^m$ given, we may write $X=X^\top +X^\bot$. Take a curve $\gamma: ]-\delta, \delta [ \to N$ with $\gamma(0)=p$ and $\gamma'(0)=X^\top$. Since $\Pi(\gamma(t))=\gamma(t)$ we have
\[ \abs{\Pi(p+t X^\top)-\gamma(t)} \le \abs{p+t X^\top - \Pi(p + t X^\top)} + \abs{p+ t X^\top - \gamma(t)} \le 2\abs{p + tX^\top - \gamma(t)}\] that is of order $o(t)$ and so $D\Pi_pX^\top = X^\top$. Since $\Pi(p+t X^\bot)=p$ for all $t$ we conclude
	\[
		D\Pi(p)(X)= \frac{d}{dt}\Bigr\rvert_{0} \Pi(p+ t X^\top + t X^\bot) = \frac{d}{dt}\Bigr\rvert_0 \Pi(p+ t X^\top) + \frac{d}{dt}\Bigr\rvert_{0} \Pi(p + t X^\bot) = X^\top;
	\]
	\item[(iii)] Let $X_2 \in C^\infty(\MN_d, \R^m)$ and let $\gamma$ be a curve in $\MN$ with $\gamma(0)=p, \gamma'(0)=X_1^\top$. Differentiating $X_2^\top\circ \gamma(t) = D\Pi(\gamma(t)) X_2(\gamma(t))$ we deduce
	\[
		D_{X_1^\top}(X_2^\top)= (D_{X_1^\top}X_2)^\top + D^2\Pi(p)(X_1^\top,X_2)
	\]
	with the particular choices $X_2=X_2^\top, X_2=X_2^\bot$ we reach
	\begin{align*}
		D^2\Pi(p)(X_1^\top,X_2^\top)&= D_{X_1^\top}(X_2^\top) - (D_{X_1^\top}X_2^\top)^\top = (D_{X_1^\top}X_2^\top)^\bot = A_p(X_1^\top,X_2^\top),\\
		D^2\Pi(p)(X_1^\top,X_2^\bot)&= - (D_{X_1^\top}X_2^\bot)^\top.
	\end{align*}
	Recall that $\langle X_2^\top, X_3^\bot\rangle =0$ implies $0=\langle D_{X_1^\top}X_2^\top, X_3^\bot \rangle + \langle X_2^\top, D_{X_1^\top}X_3^\bot \rangle$.  Additionally one has $D^2\Pi(p)(X_2^\bot, X_3^\bot)=0$ since $p= \Pi(p)= \Pi(p + s X_2^\bot + t X_3^\bot)$ for all $s,t$. A short calculation give the desired conclusion \eqref{eq:2.002}.
	\item[(iv)] Let $\gamma:]-\delta, \delta[ \to \MN_d$ be any $C^2$ curve in the tubular neighborhood of $\MN$ with $\gamma(0)=x$ and $\gamma'(0)=X$ e.g. $\gamma(t)=x+tX$. Define $\tilde{\gamma}(t)= \Pi(\gamma(t))$ the corresponding $C^2$ curve on $\MN$. Hence $\tilde{\gamma}'(0)=D\Pi(x) X$. Set $\nu(t)=\gamma(t)-\tilde{\gamma}(t)$ i.e. $\abs{\nu(t)}=\dist(\gamma(t),\MN)$ and hence $0=\langle \tilde{\gamma}'(t), \nu(t) \rangle$. Differentiating we obtain
	\[
		\langle \tilde{\gamma}'(t), \nu'(t) \rangle = -\langle \tilde{\gamma}''(t), \nu(t) \rangle = -\langle A_{\tilde{\gamma}(t)}(\tilde{\gamma}'(t), \tilde{\gamma}'(t)), \nu(t) \rangle.
	\]
	Furthermore we find
	\begin{align*}
		\abs{X}^2&= \abs{\tilde{\gamma}'(0)}^2 + 2 	\langle \tilde{\gamma}'(0), \nu'(0) \rangle + 	\abs{\nu'(0)}^2\\
		&\ge \abs{\tilde{\gamma}'(0)}^2- 2\langle A_{\tilde{\gamma}(0)}(\tilde{\gamma}'(0), \tilde{\gamma}'(0)), \nu(0) \rangle\\
		&\ge \abs{\tilde{\gamma}'(0)}^2 - 2\dist(x,\MN) \norm{A_{\Pi(x)}} \abs{\tilde{\gamma}'(0)}^2.
	\end{align*}
\end{itemize}

Now we are ready to consider outer variations.

\subsubsection{Outer variations} 
\label{ssub:outer_variations}
Let $Y=(Y^1, \dotsc, Y^n) \in C^1(B_R(y)\times \R^n, \R^n)$ be an arbitrary vector field with $Y(x,z)=0$ for $x$ close to $\partial B_R(y)$. Set $\Psi_t(x,z)=z+t Y(x,z)$. For sufficiently small $t$ we obtain a $C^1$ family of competitors setting
\[
	v_t(x)= \Pi(\Psi_t(x,u(x))).
\]
\[
	\Pi(p+ t Y(x,p))= p + t \int_{0}^1 D_i\Pi(p+st Y(x,p)) Y^i(x,p) \, ds \quad \forall p\in \MN \text{ and small } t
\]
and apply the chain rule (\cite[Proposition 1.12]{Lellis}) for the $1-$jet of $v_t$ to conclude
\begin{align*}
	&J\mathcal{V}_{t,x}(y)=\sum_{l=1}^Q \llbracket u_l(x)+ o(t) +\\
	 &t \left( D\Pi(u_l(x))D_iY(x,u_l(x)) + D^2\Pi(u_l(x))(D_iu(x),Y(x,u_l(x)))\right)  (y^i-x^i)\rrbracket.
\end{align*}
$\langle D_iu_l, D^2\Pi(u_l)(D_iu_l, Y)\rangle = \langle Y, A_{u_l}(D_iu_l,D_iu_l)\rangle$ as seen in remark \ref{rem:2.101} (iii), since $D_iu_l(x) \in T_{u_l(x)}\MN$ for a.e. $x$. \eqref{eq:2.101} necessarily implies that
\begin{equation}\label{eq:2.103}
	0= \int_{B_R(y)} \left(\sum_{i=1}^N \sum_{l=1}^Q  \langle D_iu_l, D_iY(x,u_l)\rangle + \langle A_{u_l}(D_iu_l,D_iu_l), Y(x,u_l)\rangle\right).
\end{equation}

\subsubsection{Monotonicity formulas:} 
\label{ssub:monotonicity_formulas_}
Let $u \in W^{1,2}_{loc}(\Omega, \A_Q(\MN))$ be an energy minimizing map and $\overline{B_R(y)} \subset \Omega$. For a.e. $0<r\le R$ we have
\begin{equation}\label{eq:2.104}
	\int_{B_r(y)} \left(\sum_{i=1}^N \sum_{l=1}^Q  \langle D_iu_l, D_iu_l\rangle + \langle A_{u_l}(D_iu_l,D_iu_l), u_l\rangle\right) = \int_{\partial B_r(y)} \sum_{l=1}^Q \langle u_l, \frac{\partial u_l}{\partial r}\rangle,
\end{equation}
and 
\begin{equation}\label{eq:2.105}
	(2-N) \int_{B_r(y)} \abs{Du}^2 = 2r \int_{\partial B_r(y)} \Abs{\frac{\partial u}{\partial r}}^2 - r \int_{\partial B_r(y)} \abs{Du}^2.
\end{equation}

To conclude these two identities recall the following general fact from analysis: if $a=(a^1, \ldots , a^N) \in L^1(B_R(y), \R^N)$, $f \in L^1(B_R(y), \R)$ satisfies \[\int_{B_R(y)} a^i D_i \varphi = \int_{B_R(y)} f \varphi \quad \forall \varphi \in C^{\infty}_c(B_R(y))\] then
\begin{equation}\label{eq:2.106}
	\int_{B_r(y)} a^iD_i\phi - f\phi = \int_{\partial B_r(y)} \phi \langle a, \nu \rangle \quad \forall \phi \in C^1(\overline{B_R(y)}), \text{ a.e.} 0<r<R.
\end{equation}
(This may be checked approximating the function $\textbf{1}_{B_r(y)}$ with smooth functions.)

To deduce \eqref{eq:2.104} choose the vector field $Y(x,z)=\varphi(x) z, \varphi \in C^{\infty}_c(B_R(y), \R)$ in the outer variation, hence $0= \int_{B_R(y)} a^iD_i\varphi - f \varphi$ with $a^i=\sum_{l=1}^Q \langle D_iu_l, u_l \rangle$ and $-f = \left(\sum_{l=1}^Q \abs{Du_l}^2 + \sum_{j=1}^N \langle A_{u_l}(D_ju_l, D_ju_l), u_l \rangle \right)$. Hence \eqref{eq:2.104} follows from \eqref{eq:2.106} with $\phi=1$.\\
\eqref{eq:2.105} can be checked similarly. Apply \eqref{eq:2.106} for every $j$ separately with the choice $\phi^j(x)=(x^j-y^j)$ ($D_i\phi^j= \delta_{ij}$). Take then sum in $j$ and conclude \eqref{eq:2.105}.\\
\eqref{eq:2.105} can be considered as a differential identity. If one fix some $0<s<R$, then \eqref{eq:2.105} implies that for a.e. $s\le r \le R$
\begin{align*}
	\frac{d}{dr} r^{2-N} \int_{B_r(y)} \abs{Du}^2 &= r^{1-N} (2-N) \int_{B_r(y)} \abs{Du}^2 + r^{2-N} \int_{\partial B_r(y)} \abs{Du}^2 \\
	&=2 r^{2-N} \int_{\partial B_r(y)} \Abs{\frac{\partial u}{\partial r}}^2= 2 \frac{d}{dr} \int_{B_r(y)\setminus B_r(y)} \abs{x-y}^{2-N} \Abs{\frac{\partial u}{\partial r}}^2.
\end{align*}
$r \mapsto \int_{B_r(y)} f$ is an absolutely continuous function for any $f \in L^1$. So we can integrate the differential identity above and conclude the classical \emph{monotonicity formula} for $0<s\le r\le R$:\\
\begin{equation}\label{eq:2.107}
	r^{2-N} \int_{B_r(y)} \abs{Du}^2 - s^{2-N} \int_{B_s(y)} \abs{Du}^2=2 \int_{B_r(y)\setminus B_s(y)} \abs{x-y}^{2-N} \Abs{\frac{\partial u}{\partial r}}^2.
\end{equation}

Notice that, due to the positivity of the right side in \eqref{eq:2.107} $r \mapsto E(u,B_r(y))$, is non decreasing and its limit exists. 

\begin{definition}\label{def:2.102}
	We define the density function $\Theta_u$ of $u$ on $\Omega$ by
	\begin{equation}\label{eq:2.108}
		\Theta_u(y)=\lim_{r \to 0} E(u,B_r(y)).
	\end{equation}
\end{definition}
Just note that \eqref{eq:2.107} reduces in the limit $s \to 0$ to
\begin{equation}\label{eq:2.109}
	E(u,B_r(y)) - \Theta_u(y) = 2 \int_{B_r(y)} \abs{x-y}^{2-N} \Abs{\frac{\partial u}{\partial r}}^2.
\end{equation}

\section{The Luckhaus lemma extended to $Q$-valued functions} 
\label{sec:some_classical_results_extended_to_q_valued_functions}
In this section we recall a result of S.~Luckhaus, \cite[Lemma 1]{Luckhaus} and extend it to $Q$-valued functions. As for single valued maps it is an essential tool in the proof of theorem \ref{theo:0.1}. We state it in a formulation due to R.~Moser in \cite{Moser}.

\begin{lemma}\label{lem:3.101}
There is a constant $C=C(N,m,Q)$ such that: given $0<\lambda<\frac{1}{2}$ and $u,v \in W^{1,2}(\Sp^{N-1}, \A_Q(\R^m))$ with 
\[ \int_{\Sp^{N-1}} \abs{D_\tau u }^2 + \abs{D_\tau v}^2 + \frac{\G(u,v)^2}{\epsilon^2} =K^2 \]
for some $0<\epsilon<\lambda$, then there exists $\varphi \in W^{1,2}(B_1\setminus B_{1-\lambda}, \A_Q{\R^m})$ with the following properties
\begin{align}\label{eq:L001}
&\varphi(x)= \begin{cases} u(x), &\text{ if } \abs{x}=1\\ v(\frac{x}{1-\lambda}), &\text{ if } \abs{x}=1-\lambda \end{cases} \\ \label{eq:L002}
&\int_{B_1\setminus B_{1-\lambda}} \abs{D\varphi}^2 \le C \lambda \left( \int_{\Sp^{N-1}} \abs{D_\tau u }^2 + \abs{D_\tau v}^2 + \frac{\G(u,v)^2}{\lambda^2} \right)\le C\, \lambda K^2\\
&\varphi(x) \in \{ y \in \R^m \colon \dist(y, u(\Sp^{N-1}) \cup v(\Sp^{N-1}) )< a\}  \text{ for some $a>0$ with } \label{eq:L003}\\ \nonumber
 &a^2\le \frac{C}{\lambda^{N-2}} \left(\int_{\Sp^{N-1}} \abs{D_\tau u }^2 + \abs{D_\tau v}^2\right)^{\frac{1}{2}} \left(\int_{\Sp^{N-1}} \G(u,v)^2\right)^{\frac{1}{2}} + \frac{C}{\lambda^{N-1}} \int_{\Sp^{N-1}} \G(u,v)^2
 \\ \nonumber &\le C_\infty \, Q^2\, \lambda^{2-N} \epsilon K^2 \nonumber.
\end{align}
%
\end{lemma}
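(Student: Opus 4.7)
The plan is to adapt Luckhaus' original combinatorial construction (in R.~Moser's formulation) to the $Q$-valued setting. I would flatten the problem using a bi-Lipschitz chart turning $\Sp^{N-1}$ into a finite union of $(N-1)$-cubes and the annulus $B_1\setminus B_{1-\lambda}$ into a slab of cubes of side $\lambda$ carrying $u$ on the top face and $v$ (precomposed with dilation by $1-\lambda$) on the bottom face. On each such cube $C\times[0,\lambda]$ the map $\varphi$ would be built skeleton-by-skeleton: on the vertical $1$-skeleton I set $\varphi$ equal to the $\A_Q$-valued straight-line interpolation between $u$ at the top vertex and $v$ at the bottom vertex, using an optimal matching of sheets realizing $\G(u,v)$ there; then for $k=2,\dotsc,N$ I would extend inductively to the $k$-skeleton by a $0$-homogeneous radial extension from the barycenter of each $k$-face.

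A Fubini/averaging over translates of the grid allows one to select a grid on which, for every $k$ and every $k$-face $F$ of the decomposition, the restricted energies $\int_{F}\abs{D_\tau u}^2+\abs{D_\tau v}^2$ and the $L^2$-deviation $\int_{F}\G(u,v)^2$ are dominated by the global quantities weighted by an appropriate power of $\lambda$. Combined with the standard fact that a $0$-homogeneous radial extension from the center of a $k$-cube of side $\lambda$ has Dirichlet energy bounded by $C\lambda$ times the energy of its boundary datum, this yields \eqref{eq:L002}: the ``vertical'' part contributes $\lambda\int\G(u,v)^2/\lambda^2$, the tangential part contributes $\lambda$ times the tangential energies, and the sum over skeleta telescopes. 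For the range bound \eqref{eq:L003}, every value of $\varphi$ is by construction a convex combination of a value of $u$ or $v$ at some grid vertex with a previously produced value on the boundary of a face; a Morrey-type inequality on a face of size $\lambda$ (using the good-slice energy bound) controls the $L^\infty$-diameter of those boundary values on each face and hence their distance to $u(\Sp^{N-1})\cup v(\Sp^{N-1})$. Bookkeeping the accumulated factors $\lambda^{-1}$ across the skeleta and using $\epsilon<\lambda$ collapses the two summands in \eqref{eq:L003} into $a^2\lesssim \lambda^{2-N}\epsilon K^2$.

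The main obstacle I expect is the $Q$-valued bookkeeping needed to make the ``straight-line interpolation'' rigorous and globally consistent. Interpolation of two $Q$-tuples is only defined once a matching of sheets is fixed, and an optimal matching can jump between neighboring faces where the multiplicity structure of $u$ or $v$ changes (two sheets coalescing on one face but not on the adjacent one), threatening the $\A_Q(\R^m)$-valuedness of $\varphi$ across face boundaries. Two routes present themselves: either compose with Almgren's bi-Lipschitz embedding $\xi:\A_Q(\R^m)\to\R^M$ and its Lipschitz retraction $\rho$, apply the classical single-valued Luckhaus lemma to $\xi\circ u$ and $\xi\circ v$, and set $\varphi=\xi^{-1}\circ\rho\circ\tilde\varphi$, at the price of degrading the sharp range bound because $\rho$ is not an isometry near the boundary of $\xi(\A_Q(\R^m))$; or one performs the construction intrinsically on $\A_Q(\R^m)$, which is the route indicated by section~\ref{sec:ALuckhaus} and which preserves \eqref{eq:L003} in its stated form.
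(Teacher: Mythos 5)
Your fallback ``route one'' is in fact exactly the paper's proof of Lemma \ref{lem:3.101}: compose with Almgren's embedding $\boldsymbol{\xi}$, apply Moser's single-valued Luckhaus lemma to $\boldsymbol{\xi}\circ u$, $\boldsymbol{\xi}\circ v$, and push the resulting extension back with the Lipschitz retraction $\boldsymbol{\rho}$. You dismiss this on the grounds that $\boldsymbol{\rho}$ spoils the range bound, but the lemma as stated only asks for constants $C=C(N,m,Q)$ and $C_\infty Q^2$, and both the energy and the distance estimates survive composition with $\boldsymbol{\rho}$ up to exactly such a constant (since $\G(\boldsymbol{\rho}\circ\tilde\varphi(x),u(z))\le \mathrm{Lip}(\boldsymbol{\rho})\,\abs{\tilde\varphi(x)-\boldsymbol{\xi}(u(z))}$). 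So the short argument you set aside is sufficient for the statement; only the sharper, appendix version with explicit $Q$-dependence and constants depending only on $N$ requires the intrinsic construction.

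In your main, intrinsic construction there is a genuine gap at the two-dimensional faces. The $0$-homogeneous radial extension from the barycenter is only admissible for faces of dimension $k\ge 3$: for a $2$-cell the radial extension of a $W^{1,2}$ boundary datum has Dirichlet energy $\sim\int_0^1 r^{-1}\,dr\int_{\partial F}\abs{D_\tau\phi}^2$, which diverges logarithmically, so your induction breaks down at its very first step $k=2$ (this is the critical dimension for the Dirichlet integral, and it is precisely where Luckhaus' original proof uses the linear structure of the target). In the $Q$-valued setting one must replace it by the extension built from the irreducible decomposition of the boundary trace (the ``rolled'' harmonic extension of De Lellis, Lemma \ref{lem:A003}); moreover the naive harmonic-type extension alone destroys the $L^\infty$/range bound (see Example \ref{ex:A003}), which is why the appendix additionally subdivides each $1$-face into subintervals of length $\epsilon$ and inserts linear interpolations between $u$ and $v$ at the subdivision points (Lemma \ref{lem:A004}) before extending; this is also where the factor $\epsilon$ in \eqref{eq:L003} actually comes from, rather than from the Morrey-type bookkeeping you sketch. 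Your worry about matching sheets across neighboring faces, on the other hand, is not the real obstruction: the interpolations are only used on the vertical segments over the $0$-skeleton (and over the $\epsilon$-subdivision points of $1$-faces), where each segment carries its own optimal matching and no compatibility between different segments is needed, because the higher-dimensional extensions only use the already-defined boundary values.
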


\begin{proof}
The lemma can be concluded directly from Moser's argument, see \cite[Lemma 4.4]{Moser} using Almgren's bilipschitz embedding $\boldsymbol{\xi}$. \\
Before we deduce it from Moser's result for $Q$-valued function, we shortly describe how to get the estimates with $K$ from the Moser's result. The first is just \[ \int_{\Sp^{N-1}} \abs{D_\tau u }^2 + \abs{D_\tau v}^2 + \frac{\G(u,v)^2}{\lambda^2} \le \int_{\Sp^{N-1}} \abs{D_\tau u }^2 + \abs{D_\tau v}^2 + \frac{\G(u,v)^2}{\epsilon^2}. \]
The second follows by Cauchy's inequality:
\begin{align*}
&2\epsilon \left(\int_{\Sp^{N-1}} \abs{D_\tau u }^2 + \abs{D_\tau v}^2\right)^{\frac{1}{2}} \left(\int_{\Sp^{N-1}} \frac{\G(u,v)^2}{\epsilon^2}\right)^{\frac{1}{2}} \\
&\le \epsilon \left(\int_{\Sp^{N-1}} \abs{D_\tau u }^2 + \abs{D_\tau v}^2 + \int_{\Sp^{N-1}} \G(u,v)^2\right).
\end{align*}
To derive the result for $Q$-valued functions one can argue as follows: Given $u,v$ as stated, $\boldsymbol{\xi}\circ u, \boldsymbol{\xi}\circ v \in W^{1,2}(\Sp^{N-1}, \R^{\tilde{m}})$ are admissible, and all integral quantities are comparable up to a constant $C(N,m,Q)>0$. By \cite[Lemma 4.4]{Moser} there exists a single valued function  $\tilde{\varphi} \in W^{1,2}(B_1\setminus B_{1-\lambda}, \R^{\tilde{m}})$ that has the stated properties, replacing $u$ by $\boldsymbol{\xi}\circ u$ and $v$ by $\boldsymbol{\xi}\circ v$. Set $\varphi= \boldsymbol{\rho}\circ \tilde{\varphi} \in W^{1,2}(B_1\setminus B_{1-\lambda}, \A_Q(\R^m))$ using Almgren's retraction $\boldsymbol{\rho}$. $\varphi$ then has the desired properties, since again all integral quantities are comparable up to a constant $C(N,m,Q)>0$.
\end{proof}
For the sake of completeness we presented here the "simple" argument based on Almgren's bilipschitz embedding $\boldsymbol{\xi}$. The appendix \ref{sec:ALuckhaus} discusses this result in more detail and contains an intrinsic proof.

\section{Compactness of energy minimizing maps} 
\label{sec:compactness_of_energy_minimizing_maps}
We can follow the classical argument to get as a consequence of lemma \ref{lem:3.101} a compactness result for energy minimizing maps (compare \cite[section 2.9 Lemma 1]{Simon}).
\begin{lemma}\label{lem:4.1}
If $\{u(k)\}\subset W^{1,2}(\Omega, \A_Q(\MN))$ is a sequence of energy minimizing with $\limsup_{k\to \infty} E(u(k), B_\rho(y)) <\infty$ for each ball $\overline{B_R(y)}\subset \Omega$, then there is a subsequence $\{u(k')\}$ and a energy minimizer $u \in W^{1,2}(\Omega, \A_Q(\MN))$ s.t. 
\begin{itemize}
	\item[(i)] $\G(u(k'),u) \to 0$ in $L^2(\Omega)$\\
	\item[(ii)] $\lim_{k'\to \infty} E(u(k'), B_R(y))= E(u,B_R(y))$ for every ball $\overline{B_R(y)}\subset \Omega$.
\end{itemize}
\end{lemma}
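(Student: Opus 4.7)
The plan is to follow the classical Luckhaus-style compactness argument, adapted to the $\MN$-valued, $Q$-valued setting. First I would extract a limit: the uniform energy bound combined with Sobolev compactness (reducing to $\R^{\tilde m}$-valued Sobolev maps through Almgren's bilipschitz embedding $\boldsymbol{\xi}$, if convenient) yields a subsequence $\{u(k')\}$ converging to some $u$ strongly in $L^2_{loc}(\Omega)$ and weakly in $W^{1,2}_{loc}(\Omega)$. Compactness of $\MN$ and the closedness of $\A_Q(\MN)$ inside $\A_Q(\R^m)$ force $u(x)\in \A_Q(\MN)$ a.e., and lower semicontinuity of the Dirichlet energy gives $E(u, B_R(y)) \le \liminf_{k'} E(u(k'), B_R(y))$. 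This already yields (i) and the easy half of (ii).

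To show $u$ is itself a minimizer (and simultaneously upgrade to equality in (ii)) I would build competitors for $u(k')$ with the help of Lemma \ref{lem:3.101}. Given $v \in W^{1,2}(\Omega, \A_Q(\MN))$ with $v = u$ on $B_R(y) \setminus B_{R-\delta}(y)$ and a small parameter $\lambda \in (0, \delta/2)$, a Fubini argument applied to $\G(u(k'),u)^2$ and $|Du(k')|^2$ selects $r \in (R-\delta, R)$, along a subsequence, so that $\int_{\partial B_r}|D_\tau u(k')|^2$ stays bounded and $\int_{\partial B_r}\G(u(k'), u)^2 \to 0$; on such a slice $v|_{\partial B_r} = u|_{\partial B_r}$. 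Then I define
\begin{equation*}
w(k')(x) = \begin{cases} u(k')(x), & r \le |x-y| \le R,\\ (\Pi \circ \varphi_{k'})(x), & (1-\lambda) r \le |x-y| \le r,\\ v\!\left(y + \tfrac{x-y}{1-\lambda}\right), & |x-y| \le (1-\lambda) r,\end{cases}
\end{equation*}
where $\varphi_{k'}$ is the (rescaled) Luckhaus map on $B_r \setminus B_{(1-\lambda) r}$ with boundary data $u(k')|_{\partial B_r}$ and $v|_{\partial B_{(1-\lambda) r}}$. Since $w(k') = u(k')$ near $\partial B_R(y)$, it is admissible.

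The main obstacle is that $\varphi_{k'}$ is only $\R^m$-valued. Estimate \eqref{eq:L003} forces the maximum distance $a_{k'}$ of $\varphi_{k'}$ from $\MN$ to tend to $0$ as $k' \to \infty$ (with $\lambda$ fixed), since $\G(u(k'), u) \to 0$ on $\partial B_r$; hence for $k'$ large $\varphi_{k'}$ lies inside the tubular neighbourhood $\MN_d$, so $\Pi \circ \varphi_{k'}$ is well defined and $\A_Q(\MN)$-valued, and by property (iv) of Remark \ref{rem:2.101} its energy is at most $(1 - 2 a_{k'} \|A\|_\infty)^{-1} \to 1$ times that of $\varphi_{k'}$. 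Combining this with the Luckhaus energy bound and the rescaling identity $\int_{B_{(1-\lambda) r}}|Dw(k')|^2 = (1-\lambda)^{N-2} \int_{B_r}|Dv|^2$, minimality of $u(k')$ gives
\begin{equation*}
\int_{B_r}|Du(k')|^2 \le \frac{C \lambda K_{k'}^2}{1 - 2 a_{k'} \|A\|_\infty} + (1-\lambda)^{N-2} \int_{B_r}|Dv|^2,
\end{equation*}
with $K_{k'}^2$ from Lemma \ref{lem:3.101}. Sending first $k' \to \infty$ (which kills the $\lambda^{-2}\G^2$ contribution to $K_{k'}^2$) and then $\lambda \to 0$ yields $\limsup_{k'}\int_{B_r}|Du(k')|^2 \le \int_{B_r}|Dv|^2$; combined with lower semicontinuity and $v = u$ outside $B_r$, this gives $E(u, B_R(y)) \le E(v, B_R(y))$, so $u$ is a minimizer. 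Choosing $v = u$ in the same construction upgrades the inequality in (ii) to equality on $B_r$ for a.e. $r$; since $r \mapsto \int_{B_r(y)}|Du|^2$ is absolutely continuous in $r$, a diagonal extraction over admissible radii $r_j \to R$ transfers the convergence to every admissible ball, completing (ii).
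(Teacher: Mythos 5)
Your argument is correct and is essentially the paper's own proof: extract the limit by Rellich (via Almgren's embedding), select a good slice by a Fatou/Fubini argument, interpolate with Lemma \ref{lem:3.101}, push the interpolation onto $\MN$ with the nearest point projection $\Pi$ (valid since the $L^\infty$-bound \eqref{eq:L003} puts it inside the tubular neighbourhood), and compare with the glued competitor, differing from the paper only in bookkeeping (you fix the slice first and send $k'\to\infty$ before $\lambda\to 0$, while the paper fixes $\delta,\epsilon,\lambda$ first). The only loose point is the last sentence for (ii): with radii $r_j\uparrow R$ the upper bound at radius exactly $R$ is not automatic (energy of $u(k')$ could a priori concentrate on $\partial B_R(y)$), so one should instead run the comparison on good radii slightly \emph{larger} than $R$ (possible since $\overline{B_R(y)}\subset\Omega$) and use absolute continuity of $r\mapsto\int_{B_r(y)}\abs{Du}^2$ — a routine fix on which the paper is equally brief.
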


\begin{proof}
$\norm{u(k)}_{L^\infty(\Omega)}<C$ for all $k$ because $\MN$ is compact by assumption. So that 
\[
	\limsup_{k \to \infty} \int_{\Omega'} \abs{u(k)}^2 + \abs{Du(k)}^2 < \infty
\]
for every $\Omega' \subset\subset \Omega$. By Rellich's compactness theorem for bounded sequences in $W^{1,2}$ there is a subsequence not relabeled $u(k)$ and a $u \in W^{1,2}(\Omega, \A_Q(\R^m))$ s.t. $\G(u(k),u)(x) \to 0$ in $L^2$ and a.e. $x \in \Omega$. Hence $u(x)\in \A_Q(\MN)$ for a.e. $x \in \Omega$. It remains to prove that
\[
	\lim_{k \to \infty } E(u(k), B_R(y)) = E(u,B_R(y)) \quad\forall \overline{B_R(y)} \subset \Omega.
\]
Let be $B_R(y)$ be given, not changing notation we write $u(k)(x)$ for $u(k)(y+Rx)$, so we can assume that $B_R(y)$ is the unite ball $B_1$. So $\G(u(k), u) \to 0$ in $L^1(B_1)$ and there is $K>0$ with $\liminf_{k \to \infty} \int_{B_1} \abs{Du(k)}^2 \le K^2$.\\
Let $\frac{1}{2} < r_1 <1$ and $0< \delta< 1-r_1$ arbitray small but fixed. Then fix $0<\epsilon < \lambda < \frac{\delta}{3}$ s.t. if $C$ is the constant of \ref{lem:3.101}, $d$ the size of the tubular neighborhood, then
\[ C \frac{2^{N+2}}{1-r_1} K^2 \lambda < \delta \text{ and } C \frac{2^{N+3}}{\lambda^{N-2} (1-r_1)} \epsilon < d^2 \]
For $u(k)_r(x)= u(k)(rx)$, $u_r(x)=u(rx)$, Fatou's lemma states
\begin{align*}
&\int_{r_1}^1 \liminf_{k \to \infty}  \int_{\Sp^{N-1}} \abs{Du(k)_r}^2 + \abs{Du_r}^2 + \frac{\G(u(k)_r, u_r)^2}{\epsilon^2} \\
&\le \liminf_{k \to \infty} \int_{r_1}^1 r^{-N} \int_{\partial B_r} r^2 (\abs{Du(k)}^2 + \abs{Du}^2) + \frac{\G(u(k),u)^2}{\epsilon^2} \\
&\le 2^{N} \liminf_{k \to \infty} \int_{B_1\setminus B_{1-\lambda}} \abs{Du(k)}^2 + \abs{Du}^2 + \frac{\G(u(k),u)^2}{\epsilon^2} \le 2^{N+1} K^2
\end{align*}
Hence there is a radius $\rho$, $0<r_1<\rho<1$ and a subsequence $u(k)$ not relabelled with 
\[ \int_{\Sp^{N-1}} \abs{Du(k)_\rho}^2 + \abs{Du_\rho}^2 + \frac{\G(u(k)_\rho, u_\rho)^2}{\epsilon^2} < \frac{2^{N+2}}{1-r_1} K^2. \]
We apply the Luckhaus' lemma \ref{lem:3.101} to each tuple $u(k)_\rho, u_\rho$ and obtain $\tilde{\varphi}(k) \in W^{1,2}(B_1\setminus B_{1-\lambda}, \A_Q(\R^m))$ with $\tilde{\varphi}(k)(x)= u(k)_\rho(x)$ for $\abs{x}=1$, $\tilde{\varphi}(k)(x)= u_\rho(\frac{x}{1-\lambda})$ for $\abs{x}=1-\lambda$, $\int_{B_1\setminus B_{1-\lambda}} \abs{D\tilde{\varphi}(k)}^2 \le   C \frac{2^{N+2}}{1-r_1} K^2 \lambda < \delta$ and
\[ \tilde{\varphi}(k)(x) \in \{ z \colon \dist(z, u(k)(\partial B_\rho) \cup u(\partial B_\rho))< a\} \]
with  $a^2 \le C \frac{2^{N+3}}{\lambda^{N-2} (1-r_1)} \epsilon < d^2 $.\\
Therefore $\varphi(k)(x)=\Pi(\tilde{\varphi}(k)(\frac{x}{\rho}))$ is well defined and satisfies $\varphi(k)(x)= u(k)(x)$ for $\abs{x}=\rho$, $\varphi(k)(x)= u(\frac{x}{1-\lambda})$ for $\abs{x}=(1-\lambda)\rho$ and 
\[ \int_{B_\rho\setminus B_{(1-\lambda)\rho}} \abs{D\varphi(k)}^2 \le C \rho^{N-2} \int_{B_\rho\setminus B_{(1-\lambda)\rho}} \abs{D\tilde{\varphi}(k)}^2(\frac{x}{\rho}) \frac{dx}{\rho^N} \le C\delta \rho^{N-2}. \]  
Given a competitor $v \in W^{1,2}(B_\rho, \A_Q(\MN))$ to $u$, the map
\[ v(k)=\begin{cases}
u(k), &\text{ for } \rho \le \abs{x} \le 1 \\
\varphi(k), &\text{ for } (1-\lambda)\rho \le \abs{x} \le \rho \\
v(\frac{x}{1-\lambda}), \text{ for }  \abs{x} \le (1-\lambda)\rho \\
\end{cases}\]
defines a competitor to $u(k)$. Hence by minimality of each $u(k)$ we got
\[ \int_{B_1} \abs{Du(k)}^2 \le \int_{B_1} \abs{Dv(k)}^2 \le \int_{B_1\setminus B_\rho} + C \rho^{N-2} \delta + (1- \lambda)^{N-2} \int_{B_\rho} \abs{Dv}^2 \]
or
\[ \int_{B_\rho} \abs{Du(k)}^2 \le C\delta + \int_{B_\rho} \abs{Du}^2. \]
This implies that
\[ \int_{B_\rho} \abs{Du}^2 \le \liminf_{k \to \infty} \int_{B_\rho} \abs{Du(k)}^2 \le C \delta + \int_{B_\rho} \abs{Dv}^2.\]
$\delta>0$ had been arbitrary small, so $u$ is minimizing on each $B_{r_1} \subset B_\rho \subset B_1$. Choose $u=v$ to deduce the strong convergence of energy, (ii).
\end{proof}


\section{$\epsilon$-H\"older regularity lemma}
\label{sec:e-Hoelder regularity lemma}
In this section we are going to prove an $\epsilon$-regularity lemma for the H\"older continuity of energy minimizing maps in the spirit of the Schoen-Uhlenbeck regularity theorem. 

\begin{lemma}\label{lem:5.101}
Let $u(k) \in W^{1,2}(B_1, \A_Q(\MN)$ be a sequence of energy minimizers with
\[ \lim_{k \to \infty} E(u(k), B_1) =0. \]
For a subsequence, not relabled we can find $a_l \in W^{1,2}(B_1, \A_{Q_l}(\R^m))$, $\sum_{l=1}^L Q_l =Q$, a sequence of points $p_l(k) \in \MN$ s.t. 
\begin{itemize}
\item[(i)] $a_l$ is Dirichlet minimizing, \\
\item[(ii)] $\G(\sigma_k^{-1} u(k), a(k)) \to 0$ in $L^1(B_1)$ for $\sigma_k^2 =E(u(k), B_1)$, $a(k)=\sum_{l=1}^L a_l \oplus \sigma_k^{-1} p_l(k)$\\
\item[(iii)] $\lim_{k \to \infty} \sigma_k^{-2} E(u(k), B_R) = \sum_{l=1}^L \int_{B_R} \abs{Da_l}^2 \text{ for any } 0<R<1.$
\end{itemize}
\end{lemma}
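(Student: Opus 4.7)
The strategy is to identify the blow-up of $u(k)$ at rate $\sigma_k^{-1}$ as a collection of Dirichlet minimizing $\A_{Q_l}(\R^m)$-valued maps, using Lemma~\ref{lem:4.1} for the zeroth-order limit and Lemma~\ref{lem:3.101} for minimality and energy convergence.

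Applying Lemma~\ref{lem:4.1} to $\{u(k)\}$, for a subsequence $u(k)\to u_\infty$ in $L^2$ with $u_\infty$ minimizing and with convergence of energies. Since $E(u_\infty,B_1)=\lim_k E(u(k),B_1)=0$, the map $u_\infty$ is constant on the connected set $B_1$ and equals some $T=\sum_{l=1}^L Q_l\llbracket P_l\rrbracket$ with distinct $P_1,\dots,P_L\in\MN$ and $\sum_l Q_l=Q$. Fix $R<1$ and set $\eta:=\tfrac14\min_{l\neq l'}|P_l-P_{l'}|>0$. At every $x$ with $\G(u(k)(x),T)<\eta$---which holds outside a set of vanishing measure by $L^2$-convergence---the $Q$ values of $u(k)(x)$ split canonically into $L$ clusters of cardinality $Q_l$ inside $B_\eta(P_l)$, yielding a decomposition $u(k)=\sum_{l=1}^L w_l(k)$ with $w_l(k)\in W^{1,2}(B_R,\A_{Q_l}(\MN))$, up to a harmless modification on the vanishing bad set. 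Setting $p_l(k):=P_l$ and defining $a_l(k)\in W^{1,2}(B_R,\A_{Q_l}(\R^m))$ componentwise by $w_l(k)=p_l(k)\oplus\sigma_k\,a_l(k)$, one has $\sum_l\int_{B_R}|Da_l(k)|^2=\sigma_k^{-2}\int_{B_R}|Du(k)|^2$, uniformly bounded by the monotonicity formula \eqref{eq:2.107} and the normalisation $E(u(k),B_1)=\sigma_k^2$. A diagonal extraction in $R\uparrow 1$ produces $a_l\in W^{1,2}_{\mathrm{loc}}(B_1,\A_{Q_l}(\R^m))$ with $a_l(k)\to a_l$ strongly in $L^2_{\mathrm{loc}}$ and weakly in $W^{1,2}_{\mathrm{loc}}$; unwinding the rescalings delivers (ii).

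The main obstacle is establishing (i) and (iii) jointly, for which I plan to use Luckhaus' lemma. Given a competitor $b_l$ of $a_l$ on $B_R$ with $b_l=a_l$ in a neighbourhood of $\partial B_R$, I construct a competitor $v(k)$ of $u(k)$ as follows. On the inner ball $B_{R(1-\lambda_k)}$ set
\begin{equation*}
\tilde v(k):=\sum_{l=1}^L\Pi\bigl(p_l(k)\oplus\sigma_k b_l\bigr);
\end{equation*}
this is $\A_Q(\MN)$-valued since $\dist(p_l(k)\oplus\sigma_k b_l,\MN)=O(\sigma_k)$, and Remark~\ref{rem:2.101}(iv) gives the pointwise bound $|D\tilde v(k)|^2\le(1+C\sigma_k)\sigma_k^2\sum_l|Db_l|^2$. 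In the thin annulus $B_R\setminus B_{R(1-\lambda_k)}$ I apply Lemma~\ref{lem:3.101} separately on each cluster (inside $\A_{Q_l}(\R^m)$) to interpolate between the trace of $a_l(k)$ on a good inner sphere---chosen by Fubini so that the hypothesis of Lemma~\ref{lem:3.101} holds with uniformly bounded $K$---and the trace of $b_l$ on $\partial B_{R(1-\lambda_k)}$, and then compose with $y\mapsto\Pi(p_l(k)\oplus\sigma_k y)$. By \eqref{eq:L002} the annular contribution after rescaling is at most $C\lambda_k\sigma_k^2 K^2$, which is $o(\sigma_k^2)$ when $\lambda_k\to 0$ slowly enough; \eqref{eq:L003} ensures the interpolation stays $O(\sigma_k)$-close to $\MN$ so that $\Pi$ is applicable. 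Minimality of $u(k)$, division by $\sigma_k^2$, and $k\to\infty$ then give
\begin{equation*}
\limsup_k\,\sigma_k^{-2}\int_{B_R}|Du(k)|^2\le\sum_l\int_{B_R}|Db_l|^2.
\end{equation*}
Specialising $b_{l'}=a_{l'}$ for $l'\neq l$ isolates the minimality of each $a_l$ (giving (i)); taking $b_l=a_l$ for every $l$ together with $W^{1,2}$-weak lower semicontinuity of the Dirichlet energy yields (iii). The principal technical delicacy is executing the Luckhaus interpolation compatibly with the projection onto $\MN$ uniformly in $k$---feasible because every relevant displacement is $O(\sigma_k)\to 0$.
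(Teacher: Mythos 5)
Your second half (competitor construction via Luckhaus' lemma, a Fubini/Fatou choice of good radius, projection $\Pi$ after rescaling, then minimality of $u(k)$ plus lower semicontinuity) is essentially the paper's argument. The genuine gap is in the first half, where you produce the maps $a_l$. You cluster the values of $u(k)$ at the \emph{fixed} scale $\eta$ determined by the distinct points $P_l$ of the coarse limit $T$, and then you center each cluster at the \emph{fixed} point $p_l(k):=P_l$ and rescale by $\sigma_k^{-1}$. This does not work: the $L^2$ convergence $u(k)\to T$ only gives $\int\G(u(k),T)^2\to 0$, not $O(\sigma_k^2)$, so $a_l(k)$ defined by $w_l(k)=P_l\oplus\sigma_k a_l(k)$ has no uniform $L^2$ bound and need not converge (strong $L^2_{loc}$ compactness, which you invoke, is simply unavailable). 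Worse, inside a single cluster of multiplicity $Q_l$ the values may separate at intermediate scales between $\sigma_k$ and $1$, so after the $\sigma_k^{-1}$ blow-up the cluster itself must be split further, with $k$-dependent translation points drifting apart at rate $o(1)$ but $\gg\sigma_k$. This is exactly what the concentration--compactness lemma \ref{lem_I:A1.1} provides: a Poincar\'e-type mean $T(k)$ adapted to $u(k)$ (so that $\int\G(u(k),T(k))^2\le C\sigma_k^2$), a splitting whose number of pieces $L$ may exceed the number of distinct points of the coarse limit, and translation points $t_l(k)=\sigma_k^{-1}p_l(k)$ with mutual distances tending to infinity after rescaling. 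Your construction, with fixed centers and only the coarse splitting, proves the statement only in the special case where no such intermediate-scale separation occurs, and the $p_l(k)$ in the statement are allowed to depend on $k$ precisely because it does occur in general.

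A second, more technical, flaw in your competitor step: you assert $\dist\bigl(p_l(k)\oplus\sigma_k b_l,\MN\bigr)=O(\sigma_k)$, but a competitor $b_l\in W^{1,2}(B_R,\A_{Q_l}(\R^m))$ need not be bounded, so $\sigma_k b_l$ is not uniformly small and $\Pi$ need not be defined on $p_l(k)\oplus\sigma_k b_l$; the same issue affects your use of \eqref{eq:L003} on the annulus, where the Luckhaus bound controls the distance to the \emph{traces}, not to $\MN$, unless those traces are themselves uniformly close to $\MN$ after multiplication by $\sigma_k$. The paper resolves this by first truncating with the $1$-Lipschitz retraction $\psi_k$ onto the ball of radius $\sigma_k^{-1/2}$ (so $\dist(\sigma_k\tilde a(k),\A_Q(\MN))\le\sqrt{L\sigma_k}\to 0$ while the energy does not increase and the $L^2$ approximation survives), and only then interpolates and projects. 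Without such a truncation (or a reduction to bounded competitors, which you would have to justify), the projection step of your argument breaks down.
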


\begin{proof}
To every $u(k)$ fix a "mean" $T(k) \in \A_Q(\R^m)$. We may assume that $T(k) \in \A_Q(\MN)$, if not replace it by $T \in \A_Q(\MN)$ with $\G(T(k),T)= \min_{S \in A_Q(\MN)} \G(T(k),S)$. $T$ has still the property of a "mean", as one may check:
\begin{align*}
\int_{B_1} \G(u(k),T)^2 &\le 2 \int_{B_1} \G(u(k),T(k))^2 + 2 \int_{B_1} \G(T(k),T)^2\\ &\le 4 \int_{B_1} \G(u(k),T(k))^2 \le 4C \int_{B_1} \abs{Du(k)}^2.
\end{align*}
Next we apply the concentration compactness lemma, \ref{lem_I:A1.1}, to the sequence of tuples $\sigma_k^{-1} u(k), \sigma_k^{-1} T(k)$. For a subsequence not relabeled, we get maps $a_l \in W^{1,2}(\A_{Q_l}(B_1),\R^m)$, a related sequence $t_l(k)=\sigma_k^{-1}p_l(k) \in spt(\sigma_k^{-1} T(k))$. (ii) is a consequence of the concentration compactness lemma. It remains to prove that the $a_l$'s are Dirichlet minimizing and that the strong convergence of energy (iii) holds. The concentration compactness lemma implies $\sum_{l=1}^L \int_{B_R} \abs{Da_l}^2 \le \liminf_{k \to \infty} \sigma_k^{-1} E(u(k),B_R)$ for all $0<R<1$ as a consequence of the lower semicontinuity of energy.\\
$\psi_k$ denotes the $1-$Lipschitz retraction map onto $B_{\sigma_k^{-\frac{1}{2}}}\subset \R^m$ defined as 
\[\psi_k(z)=\begin{cases}
z, &\text{ for } \abs{z} < \sigma_k^{-\frac{1}{2}}\\
\sigma_k^{-\frac{1}{2}} \frac{z}{\abs{z}}, \text{ for } \abs{z} \ge \sigma_k^{-\frac{1}{2}}
\end{cases}\]
Furthermore we set
\begin{equation}\label{eq:5.101}
a(k)= \sum_{l=1}^L a_l \oplus \sigma_k^{-1} p_l(k) \text{ and } \tilde{a}(k)=\sum_{l=1}^L \psi_k(a_l) \oplus \sigma_k^{-1} p_l(k).
\end{equation}
We still have $\G(\tilde{a}(k), \sigma_k^{-1} u(k))\to 0$ in $L^2(B_1)$ because
\begin{align*}
\int_{B_1} \G(\tilde{a}(k), \sigma_k^{-1}u(k))^2 &\le 2 \int_{B_1} \G(a(k), \sigma_k^{-1}u(k))^2 + 2\int_{B_1} \G(a(k),\tilde{a}(k))^2\\
&\le 2 \int_{B_1} \G(a(k), \sigma_k^{-1}u(k))^2 + 2 \sum_{l=1}^L \int_{B_1 \cap \{ x\colon \abs{a_l}(x) \ge \sigma_k^{-\frac{1}{2}}\}} \abs{a_l}^2.
\end{align*}
$\int_{B_1} \G(a(k), \sigma_k^{-1}u(k))^2 \to 0$ in $L^2(B_1)$ due to the concentration compactness lemma, the second integral tends to $0$ as $k \to \infty$ (since $\sigma_k \to 0$) and $\abs{a_l} \in L^2(B_1)$.\\
Let $\frac{1}{2}<R<1$ be fixed and $0<\delta <1-R$ be arbitrary small but fixed as well. Choose $0<\epsilon<\lambda<\frac{\delta}{3}$ s.t. 
\[ C\frac{2^{N+2}}{1-R} \lambda < \delta \quad \text{ and }\quad C \frac{2^{N+3}}{\lambda^{N-2}(1-R)} \epsilon < \frac{1}{2} d^2, \]
where $C$ is the constant of the Luckhaus' lemma \ref{lem:3.101} and $d>0$ the size of the tubular neighbourhood to $\MN$.\\
Using the classical notation for rescalling a function $f_r(x)=f(rx)$, Fatou's lemma states 
\begin{align*}
&\int_{1-R}^1 \liminf_{k\to \infty} \int_{\Sp^{N-1}} \left(\frac{\abs{Du(k)_r}^2}{\sigma_k^2} + \abs{Da(k)_r}^2 + \frac{\G(\tilde{a}(k)_r,\sigma_k^{-1} u(k)_r)^2}{\epsilon^2}\right)\\
&\le \liminf_{k\to \infty}\int_{1-R}^1 r^{-N} \int_{\partial B_r} \left( r^2\left(\frac{\abs{Du(k)_r}^2}{\sigma_k^2} + \abs{Da(k)_r}^2\right) + \frac{\G(\tilde{a}(k)_r,\sigma_k^{-1} u(k)_r)^2}{\epsilon^2}\right) \\
& \le 2^N \liminf_{k\to \infty} \int_{B_1\setminus B_{1-R}} \left( \frac{\abs{Du(k)_r}^2}{\sigma_k^2} + \abs{Da(k)_r}^2 + \frac{\G(\tilde{a}(k)_r,\sigma_k^{-1} u(k)_r)^2}{\epsilon^2}\right) \le 2^{N-1}.
\end{align*}
Hence there must be a radius $R<\rho<1$ and a subsequence not relabelled with
\[ \int_{\Sp^{N-1}} \left( \frac{\abs{Du(k)_\rho}^2}{\sigma_k^2} + \abs{Da(k)_\rho}^2 + \frac{\G(\tilde{a}(k)_\rho,\sigma_k^{-1} u(k)_\rho)^2}{\epsilon^2} \right)< \frac{2^{N+2}}{1-R}.\]
As in the proof of the compactness of minimizers, lemma \ref{lem:4.1}, apply Luckhaus' lemma \ref{lem:3.101} to each tuple $\sigma_k^{-1} u(k)_\rho, \tilde{a}(k)_\rho$ to get $\tilde{\varphi}(k) \in W^{1,2}(B_1\setminus B_{1-\lambda}, \A_Q(\R^m))$ with the properties that
\begin{itemize}
\item[(i)] $\tilde{\varphi}(k)(x)= \sigma_k^{-1} u(k)_\rho(x)$  for  $\abs{x}=1$ and $\tilde{\varphi}(k)(x)=\tilde{a}(k)_\rho(\frac{x}{1-\lambda})$ for $ \abs{x}=1-\lambda$;\\
\item[(ii)] $\int_{B_1\setminus B_{1-\lambda}} \abs{D\tilde{\varphi}(k)}^2 \le C\frac{2^{N+2}}{1-R}\lambda < \delta$;\\
\item[(iii)] for some $a^2 \le C \frac{2^{N+3}}{\lambda^{N-2}(1-R)} \epsilon < \frac{1}{2} d^2$ \[ \tilde{\varphi}(k)(x) \in  \{ z \in \R^m \colon \dist(z, \sigma^{-1}_k u(k)(\partial B_\rho) \cup \tilde{a}(k)(\partial B_\rho) )< a \}. \]
\end{itemize}
Due to \eqref{eq:5.101} we have $\dist(\sigma_k^{-1} \tilde{a}(k)(x), \A_Q(\MN))^2 \le \sum_{l=1}^L \sigma_k^2 \abs{\psi_k(a(k))(x)}^2 \le L \sigma_k$, so that $\dist(\sigma_k \tilde{\varphi}(k)(x), \A_Q(\MN))< a + \sqrt{L\sigma_k}$. Hence it is in the tubular neigborhood and \[\varphi(k)(x) = \sigma_k^{-1} \Pi\left(\sigma_k \tilde{\varphi}(k)\bigl(\frac{x}{\rho}\bigr)\right)\] is well-defined. Furthermore it satisfies 
\begin{itemize}
\item[(i)] $\varphi(k)(x)=\sigma_k^{-1} u(k)(x)$ for $\abs{x}=\rho$, $\varphi(k)(x)=\sigma_k^{-1} \Pi(\sigma_k \tilde{a}(k)(\frac{x}{1-\lambda}))$ for $\abs{x}=(1-\lambda)\rho $;\\
\item[(ii)] $\int_{B_\rho\setminus B_{(1-\lambda)\rho}} \abs{D\varphi(k)}^2 \le C\rho^{N-2}\delta$.
\end{itemize}
Given competitors $c_l \in W^{1,2}(B_\rho, \A_Q(\R^m))$ to each $a_l$. As in \eqref{eq:5.101} set
\[c(k)= \sum_{l=1}^L c_l \oplus \sigma_k^{-1} p_l(k) \text{ and } \tilde{c}(k)=\sum_{l=1}^L \psi_k(a_l) \oplus \sigma_k^{-1} p_l(k).\]
As for $\tilde{a}(k)$ we have $\dist(\sigma_k \tilde{c}(k)(x), \A_Q(\MN))\le L\sigma_k$. Remark \ref{rem:2.101} (iv) states 
\[ (1-2 \sqrt{L\sigma_k} A)\abs{D(\Pi(\sigma_k \tilde{c}(k))}^2 \le \sigma_k^2 \abs{D\tilde{c}(k)}^2 \le \sigma_k^2 \abs{Dc(k)}^2 \]
with $A=\sup_{p \in \MN} \norm{A_p}_\infty$ and $\abs{D\tilde{c(k)}}^2=\sum_{l=1}^L \abs{D\psi_k(u_l)}^2 \le \sum_{l=1}^L \abs{Du_l}^2$ because $\psi_k$ is a $1-$Lipschitz retraction. We can define a competitor to $u(k)$ by 
\[ v(k)(x) = \begin{cases}
 u(k)(x), &\text{ for } \rho< \abs{x} \le  1\\
 \sigma_k \varphi(k)(x), &\text{ for } (1-\lambda)\rho< \abs{x} \le \rho \\
 \Pi(\sigma_k \tilde{c}(k)(\frac{x}{1-\lambda})), &\text{ for }  \abs{x}\le (1-\lambda)\rho.
 \end{cases}\]
Hence by minimality of each $u(k)$ we get
\[\int_{B_1} \abs{Du(k)}^2 \le \int_{B_1} \abs{Dv(k)}^2 \le \int_{B_1\setminus B_\rho} \abs{Du(k)}^2 + C \sigma_k^{2} \delta + \frac{\sigma_k^{2}(1-\lambda)^{N-2}}{1-2\sqrt{L\sigma_k}A} \int_{B_\rho}  \abs{Dc(k)}^2 \]
or 
\[ \sigma_k^{-2} \int_{B_\rho} \abs{Du(k)}^2 \le C\delta + \frac{1}{1-2\sqrt{L\sigma_k}A} \sum_{l=1}^L \int_{B_\rho} \abs{Dc_l}^2.\]
This implies that, by lower semicontinuity of the energy,
\begin{equation}\label{eq:5.102} \sum_{l=1}^L \int_{B_\rho} \abs{Da_l}^2 \le \liminf_{k \to \infty} \sigma_k^{-2} \int_{B_\rho} \abs{Du(k)}^2 \le C\delta + \sum_{l=1}^L \int_{B_\rho} \abs{Dc_l}^2. \end{equation}
$\delta>0$ can be taken arbitrary small, so each $a_l$ must be minimizing on $B_R \subset B_\rho \subset B_1$.  Choose $c_l=a_l$ for each $l$ in \eqref{eq:5.102} to deduce the strong convergence of energy, i.e. (iii).
\end{proof}

\begin{lemma}\label{lem:5.102}
There exists $\epsilon_0 >0$ and $\alpha>0, C>1$ depending on $N,Q, \MN$ with the property that, if $u\in W^{1,2}(\Omega, \A_Q(\MN))$ is energy minimizing with
\begin{equation}\label{eq:5.103}
E(u,B_{R_0}(y_0)) \le \epsilon_0 \text{ for some } B_{R_0}(y_0) \subset \Omega,
\end{equation}
then $\abs{Du}$ is an element of the Morrey space $L^{2,N-2+2\alpha}(B_{\frac{R_0}{2}}(y_0))$. More precisely we have the estimate 
\begin{equation}\label{eq:5.104}
E(u,B_r(y)) \le C \left(\frac{r}{R}\right)^{2\alpha} E(u,B_R(y)) \forall y \in \overline{B_{\frac{R_0}{2}}(y_0)}, 0< r\le R \le \frac{R_0}{2}.
\end{equation}
Furthermore $u \in C^{0,\alpha}(B_{\frac{R_0}{2}}(y_0))$.
\end{lemma}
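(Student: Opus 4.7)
The plan is to follow the classical Schoen--Uhlenbeck scheme adapted to the multivalued, curved-target setting: establish a one-step energy decay $E(u, B_\theta) \le \theta^{2\alpha} E(u, B_1)$ under a smallness hypothesis by a blow-up/contradiction argument, then iterate to produce \eqref{eq:5.104}, and finally pass to Hölder continuity via a Morrey--Campanato characterization. Fix $\alpha \in (0, \alpha_0(N,Q))$, where $\alpha_0$ is the exponent supplied by Theorem \ref{theo_I:1.103}, and pick any $\theta \in (0, 1)$.

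First I would prove the decay claim: there exists $\epsilon_0' > 0$ such that every minimizer $u \in W^{1,2}(B_1, \A_Q(\MN))$ with $E(u, B_1) \le \epsilon_0'$ satisfies $E(u, B_\theta) \le \theta^{2\alpha} E(u, B_1)$. Suppose this fails. Then one produces a sequence $u(k)$ of minimizers on $B_1$ with $\sigma_k^2 := E(u(k), B_1) \to 0$ but $\sigma_k^{-2} E(u(k), B_\theta) > \theta^{2\alpha}$. Lemma \ref{lem:5.101} applies: along a subsequence it furnishes Dirichlet-minimizing $Q_l$-valued maps $a_l \in W^{1,2}(B_1, \A_{Q_l}(\R^m))$ with $\sum_l Q_l = Q$ and strong energy convergence $\sigma_k^{-2} E(u(k), B_R) \to \sum_l E(a_l, B_R)$ for every $0 < R < 1$. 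Monotonicity of $r \mapsto E(u(k), B_r)$ (formula \eqref{eq:2.107}) forces $\sum_l E(a_l, B_R) \le 1$ for $R < 1$, and letting $R \to 1^-$ by monotone convergence yields $\sum_l E(a_l, B_1) \le 1$; the failure assumption in turn gives $\sum_l E(a_l, B_\theta) \ge \theta^{2\alpha}$. Now each $a_l$ takes values in Euclidean $\R^m$, so Theorem \ref{theo_I:1.103} applies and gives $E(a_l, B_\theta) \le \theta^{2\alpha_0(N,Q_l)} E(a_l, B_1) \le \theta^{2\alpha_0(N,Q)} E(a_l, B_1)$. Summing in $l$,
\[ \theta^{2\alpha} \le \sum_l E(a_l, B_\theta) \le \theta^{2\alpha_0(N,Q)} \sum_l E(a_l, B_1) \le \theta^{2\alpha_0(N,Q)}, \]
which contradicts $\alpha < \alpha_0(N,Q)$ and $\theta < 1$.

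With the one-step decay in hand, set $\epsilon_0 := 2^{2-N} \epsilon_0'$. For any $y \in \overline{B_{R_0/2}(y_0)}$ the inclusion $B_{R_0/2}(y) \subset B_{R_0}(y_0)$ yields $E(u, B_{R_0/2}(y)) \le 2^{N-2} E(u, B_{R_0}(y_0)) \le \epsilon_0'$. Applying the decay lemma (translated and rescaled) at $B_{R_0/2}(y)$ and iterating---each application uses only the current energy as input, and the output energy is bounded by $\epsilon_0'$ so the hypothesis is self-sustaining---produces $E(u, B_{\theta^j R}(y)) \le \theta^{2\alpha j} E(u, B_R(y))$ for every $R \le R_0/2$ and every $j \ge 0$. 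Standard dyadic interpolation using the monotonicity of $r \mapsto E(u, B_r(y))$ then yields \eqref{eq:5.104} with $C = \theta^{-2\alpha}$. The Hölder continuity $u \in C^{0,\alpha}(B_{R_0/2}(y_0))$ follows from the Morrey--Campanato characterization for $Q$-valued Sobolev functions (combine the decay of $\int_{B_r(y)} \abs{Du}^2$ with the Poincaré inequality on $\A_Q(\R^m)$), the same step used at the end of the proof of Theorem \ref{theo_I:1.103} in \cite{Lellis}.

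The main obstacle is the blow-up step: although $u(k)$ is constrained to $\MN$, the Almgren-type regularity \ref{theo_I:1.103} is available only for targets $\R^m$, so the proof must cross from the curved to the flat setting in the limit. Lemma \ref{lem:5.101} is precisely engineered for this, delivering three independent outputs (the splitting into Euclidean tangent pieces $a_l$, the Dirichlet minimality of each $a_l$, and the strong convergence of energy). Without the strong convergence (iii) the sharp bound $\sum_l E(a_l, B_1) \le 1$ would degrade to a strict inequality in the wrong direction and the final chain of estimates would collapse; without the individual minimality of the $a_l$ the decay of Theorem \ref{theo_I:1.103} could not be invoked on each piece separately. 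Both of these ingredients in turn rest on the Luckhaus-type interpolation of Lemma \ref{lem:3.101}, which is why the curvature of $\MN$ enters only through the error controlled by the nearest-point projection estimate in Remark \ref{rem:2.101}(iv).
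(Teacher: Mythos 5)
Your proposal is correct and follows essentially the same route as the paper: a contradiction/blow-up argument using Lemma \ref{lem:5.101} together with the Euclidean decay of Theorem \ref{theo_I:1.103} applied to each piece $a_l$, then iteration of the one-step decay via the monotonicity formula and a standard dyadic interpolation, with H\"older continuity from the Morrey bound. The only differences are cosmetic (you use a general ratio $\theta$ and exponent $\theta^{2\alpha}$ where the paper uses radius $\tfrac{1}{2}$ and a factor $\gamma$ with $2\alpha=-\ln\gamma/\ln 2$; your version also states cleanly the inequality $\gamma>2^{-2\alpha_0}$-type condition that the paper's ``fix $\gamma<2^{-2\alpha_0}$'' appears to misprint).
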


\begin{proof}
First we will prove the following statement and show thereafter how it implies \eqref{eq:5.104}.\\

\emph{ $\exists \epsilon_1 >0, 0 < \gamma <1$ depending on $N, Q,\MN$ s.t. if $u\in W^{1,2}(B_R(y), \A_Q(\MN))$ is energy minimizing and $E(u,B_R(y))<\epsilon_1$ then
\begin{equation}\label{eq:5.105}
E(u,B_{\frac{R}{2}}(y)) < \gamma E(u,B_R(y)).
\end{equation}
}
Indeed, fix $\gamma<2^{-2\alpha_0}$, where $\alpha_0=\alpha_0(N,Q)>0$ is the H\"older exponent for Dirichlet minimizers into $\R^m$, compare \cite[Theorem 0.9]{Lellis}. Suppose such an $\epsilon_1>0$ does not exists, hence there are $v(k) \in W^{1,2}(B_{R_k}(y_k), \A_Q(\MN))$ energy minimizing failing \eqref{eq:5.105}, i.e. $E(v(k),B_{\frac{R_k}{2}}(y_k)) \ge \gamma E(v(k),B_{R_k}(y_k))$ and $\sigma_k^2=E(v(k),B_{R_k}(y_k) \to 0$  as $k \to \infty$. Consider the rescaled sequence \[ u(k)(x)= v(k)(y_k + R_k x) \text{ i.e. } E(u(k),B_1)=E(v(k),B_{R_k}(y_k))=\sigma_k^2.\] So we can apply the previous lemma \ref{lem:5.102}: for a subsequence $u(k)$, not relabeled, there are Dirichlet minimizing $a_l \in W^{1,2}(B_1, \A_{Q_l}(\R^m))$ ( $\sum_{l=1}^L Q_l =Q$) and a sequence of points $p_l(k) \in \MN$ such that for $a(k)=\sum_{l=1}^L a_l \oplus \sigma_k^{-1} p_l(k)$ one has
\begin{itemize}
\item[(i)] $\G(\sigma_k^{-1} u(k),a(k)) \to 0$ in $L^2(B_1)$;\\
\item[(ii)] $\lim_{k\to \infty} \sigma_k^{-2} E(u(k),B_R) = \sum_{l=1}^L E(a_l,B_R)$ for all $0<R<1$.
\end{itemize}
We firstly observe that this implies  $\sum_{l=1}^L E(a_l,B_{\frac{1}{2}}) \ge \gamma$ because \[\sigma_k^{-2} E(u(k), B_{\frac{1}{2}}) =\sigma_k^{-2} E(v(k),B_{\frac{R_k}{2}}(y_k)) \ge \gamma.\] Secondly 
\[ \sum_{l=1}^L E(a_l,B_{\frac{1}{2}}) = \lim_{k \to \infty} \sigma_k^{-2} E(u(k), B_{\frac{1}{2}}) \ge \liminf_{k\to \infty} \gamma \sigma_k^{-2} E(u(k),B_1) \ge \sum_{l=1}^L E(a_l,B_1).\]
So there must be a nontrivial $a_l$, with $E(a_l,B_{\frac{1}{2}}) \ge \gamma E(a_l,B_1)$. But $a_l$ is Dirichlet minimizing and therefore $E(a_l,B_{\frac{1}{2}}) \le 2^{-2\alpha_0} E(a_l,B_1)$. This is a contradiction. \\
Set $\epsilon_0=2^{-N} \epsilon_1 >0$, then \eqref{eq:5.104} holds because, if $E(u,B_{R_0}(y_0))<\epsilon_0$, then
\[ E(u,B_R(y))\le E(u,B_{\frac{R_0}{2}}(y)) \le 2^N E(u,B_{R_0}(y_0))\quad \forall y \in \overline{B_{\frac{R_0}{2}}(y_0)}, 0<R<\frac{R_0}{2},\]
as a consequence of the monotonicity formula \eqref{eq:2.107}.\\
Induction on \eqref{eq:5.105} gives $E(u, B_{2^{-k}R}(y)) \le \gamma^k E(u,B_R(y))$ for all $k \in \N$ and any $y \in  \overline{B_{\frac{R_0}{2}}(y_0)}, 0<R<\frac{R_0}{2}$. Choose $k\in \N$ s.t. $2^{-k-1}R<r \le 2^{-k}$ for $r<R$.  Then by monotonicity \eqref{eq:2.107} and the estimates above we have 
\[ E(u,B_r(y)) \le E(u,B_{2^{-k}R}(y)) \le \frac{1}{\gamma} \gamma^{k+1} E(u,B_R(y)) \le \frac{1}{\gamma} \left(\frac{r}{R}\right)^{2\alpha} E(u,B_R(y))\]
for $2\alpha= \frac{-\ln(\gamma)}{\ln(2)}$. \\
\eqref{eq:5.105} implies that $\abs{Du}$ is an element of the Morrey space $L^{2,N-2+2\alpha}(B_{\frac{R_0}{2}}(y_0))$. The H\"older continuity then follows classically. 
\end{proof}

\section{Properties of the singular set $\sing_H u$} 
\label{sec:properties_of_the_singular_set_sing_h_u_}

In this section let $u\in W^{1,2}(\Omega, \A_Q(\MN))$ be a fixed energy minimizing map. 
For any $B_{R_0}(y) \subset \Omega$, the monotonicity formula, \eqref{eq:2.107}, gives
\[
	\Theta_{u}(y)=\inf_{0< R \le R_0} E(u,B_R(y)) \le E(u,B_R(y)) \le E(u,B_{R_0}(y)) \quad \forall 0<R\le R_0.
\]
For any sequence $R_k \to 0$ and $y \in \Omega$ we may consider the rescaled sequence $v(k)(x)=u_{y,R_k}(x)=u(y+ R_k x)$ and observe that for any $r>0$, sufficient large $k\in \N$, i.e. $R_k \le \frac{R_0}{r}$
\[
	E(v(k),B_r)= E(u,B_{rR_k}(y)) \le E(u,B_{R_0}(y)).
\]
The compactness result, lemma \ref{lem:4.1}, asserts for a subsequence $v(k')$ there is $\varphi \in W^{1,2}(\R^N, \A_Q(\MN))$ energy minimizing with $\G(v(k),\varphi)\to 0$ in $L^2_{loc}(\R^N)$ and
\begin{equation}\label{eq:6.001}
	E(\varphi,B_r)=\lim_{k \to \infty } E(v(k),B_r)= \lim_{k \to \infty} E(u, B_{rR_k}(y))=\Theta_u(y) \quad \forall R>0.
\end{equation}
Furthermore the monotonicity formula, \eqref{eq:2.107} gives \[0=\int_{B_R\setminus B_r} \abs{x}^{2-N} \abs{\frac{\partial \varphi}{\partial r}}^2 \quad \forall 0<r<R.\] So, $0=\abs{\frac{\partial \varphi}{\partial r}}^2= \sum_{l=1}^Q \abs{\frac{\partial \varphi_l}{\partial r}}^2=0$ a.e.. Integrating this in $r$ gives $\varphi(\lambda x)=\varphi(x)$ for all $\lambda >0$ and $x \in \R^N$. This homogeneous degree zero property is characteristic for tangent maps, hence we define classically:

\begin{definition}\label{def:6.1}
A zero homogenous function $\varphi\in W^{1,2}(\R^N, \A_Q(\MN))$ is called tangent map to $u$ at $y\in \Omega$ if
\[ \exists R_k \to 0 \text{ with } \G(u_{y,R_k}, \varphi) \to 0 \text{ in } L^2_{loc.}(\R^N).\]
\end{definition}

\subsection{Properties of homogeneous degree zero minimizers} 
\label{sub:properties_of_homogeneous_degree_zero_minimizers}
Let us consider $\varphi \in W^{1,2}(\R^N, \A_Q(\MN))$ be energy minimizing and zero homogeneous, i.e. $\varphi(\lambda x)=\varphi$ for all $x \in \R^N, \lambda>0$. Every tangent map, definition \ref{def:6.1}, has this property. In this section we  state some consequences. First of all one observe that the multivalued case does not differ from the single valued, "classical" case. Our presentation follows very closely L.Simon's in \cite[section 3]{Simon}. The analysis of tangent maps enables a stratification procedure, section \ref{sub:consequences_for_sing_h_u_}. It is a direct modification of a result by F.~Almgren, \cite{Almgren2}. As a consequence we will be able to get an estimate on the singular set $\sing_H u$.
\begin{equation}\label{eq:6.201}
\Theta_\varphi(y) \text{ takes its maximum in } y=0.
\end{equation}

Indeed, fix $y \in \R^N$, for any $0<R$ combining the monotonicity \eqref{eq:2.109} with $E(\varphi, B_r(0))=\Theta_\varphi(0)\; \forall r>0$ gives
\begin{align*}
	& 2 \int_{B_R(y)} \abs{x-y}^{2-N} \abs{\frac{\partial \varphi}{\partial r_y}}^2 + \Theta_\varphi(y) = E(\varphi, B_R(y))\\
	 & \le \left(1 + \frac{\abs{y}}{R}\right)^{N-2} E(\varphi,B_{R+ \abs{y}}(0))=\left(1 + \frac{\abs{y}}{R}\right)^{N-2} \Theta_u(0);
\end{align*}
with $\frac{\partial}{\partial r_y}$ we want to emphasize the center $y$ i.e. it is the directional derivative in the radial direction $\frac{x-y}{\abs{x-y}}$. Taking the limit $R \to \infty$ we get
\begin{equation}\label{eq:6.202}
	2 \int_{\R^N} \abs{x-y}^{2-N} \abs{\frac{\partial \varphi}{\partial r_y}}^2 + \Theta_\varphi(y) \le \Theta_u(0)= \Theta_\varphi(0).
\end{equation}\\

\begin{definition}\label{def:6.201}
Let $\varphi \in W^{1,2}(\R^m,\A_Q(N))$ be a homogeneous degree $0$ energy minimizer. Then we define
\[
	S(\varphi)=\{ y \in \R^N \colon \Theta_\varphi(y)=\Theta_\varphi(0) \}.
\] 
\end{definition}
We next claim that
\begin{align}\label{eq:203}
S(\varphi) &\text{ is a linear subspace of } \R^N\\ \label{eq:2031}
\text{ and }\varphi(x+y) = \varphi(x) &\text{ for all } x\in \R^N, y \in S(\varphi)
\end{align}
To show \eqref{eq:203} and \eqref{eq:2031} observe that for $y \in S(\varphi)$, equality in \eqref{eq:6.202} implies $\frac{\partial \varphi}{\partial r_y}=0$ i.e.
\[ \varphi(y + \lambda x)= \varphi(y +x) \quad \forall x \in \R^N\lambda >0\]
Combing this with, $\varphi(\tilde{\lambda}x) =\varphi(x)\quad \forall x\in \R^N, \tilde{\lambda}>0$ gives
\begin{align*}
	\varphi(x)&=\varphi(\lambda x)\\
	 &= \varphi( y + (\lambda x -y )) = \varphi( y + \lambda^{-2}(\lambda x -y))= \varphi(\lambda^{-1} x + (y -\lambda^{-2} y)) \\
	&= \varphi(x + (\lambda - \lambda^{-1})y)= \varphi(x + \mu y)
\end{align*}
where $\mu= \lambda-\lambda^{-1}$ is an arbitrary real number. This implies naturally $E(u,B_R(0))=E(u,B_R(\mu y))$ and $\Theta_\varphi(0)= \Theta_\varphi(\mu\, y)$ for all $\mu \in \R$ and $y \in S(\varphi)$.\\

\subsection{Consequences for $\sing_H u$} 
\label{sub:consequences_for_sing_h_u_}
The obtained results gives us equivalent identifications of the H\"older regular set.
\begin{lemma}\label{lem:6.401}
Let $u\in W^{1,2}(\Omega, \A_Q(\MN))$ be energy minimizing, then the following are equivalent
\begin{itemize}
	\item[(i)] $y \in \reg_H u$;
	\item[(ii)] $\Theta_u(y)=0$;
	\item[(iii)] $u$ has a constant tangent map $\varphi$ at $y$;
	\item[(iv)] $\operatorname{ dim } S(\varphi) =N$ for some tangent map $\varphi$ of $u$ at $y$.
\end{itemize}
\end{lemma}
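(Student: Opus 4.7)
The plan is to close a cycle $(ii) \Rightarrow (i) \Rightarrow (iii) \Rightarrow (ii)$ and to treat the side equivalence $(ii) \Leftrightarrow (iv)$ separately, using the analysis of zero-homogeneous minimizers from Section \ref{sub:properties_of_homogeneous_degree_zero_minimizers}. The machinery is already in hand: the $\epsilon$-regularity Lemma \ref{lem:5.102}, the identity \eqref{eq:6.001} coming from the compactness Lemma \ref{lem:4.1}, and the translation-invariance properties \eqref{eq:203}--\eqref{eq:2031}.

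First, for $(ii) \Rightarrow (i)$, I would note that $\Theta_u(y) = 0$ together with monotonicity \eqref{eq:2.107} forces $E(u, B_r(y)) \to 0$ as $r \to 0$, so at some small radius the hypothesis of Lemma \ref{lem:5.102} holds and yields $u \in C^{0,\alpha}(B_{r/2}(y))$. For $(i) \Rightarrow (iii)$, I would fix an arbitrary tangent map $\varphi$ arising from some sequence $u_{y,R_k}$ with $R_k \to 0$, and exploit H\"older continuity on a neighborhood $B_\rho(y)$ with some exponent $\beta$ to obtain
\[
	\G\bigl(u_{y,R_k}(x), u(y)\bigr) \le C\,(R_k |x|)^{\beta} \longrightarrow 0
\]
locally uniformly in $x$. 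This forces the tangent map to be the constant $Q$-tuple $u(y)$. For $(iii) \Rightarrow (ii)$, a constant tangent map has vanishing energy on every ball, so \eqref{eq:6.001} gives $\Theta_u(y) = E(\varphi, B_1) = 0$.

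For the side equivalence $(ii) \Leftrightarrow (iv)$: if $\Theta_u(y) = 0$ then \eqref{eq:6.001} shows that every tangent map $\varphi$ has vanishing energy on every ball, so $\varphi$ is constant and $S(\varphi) = \R^N$. Conversely, if $\dim S(\varphi) = N$ for some tangent map, then $S(\varphi) = \R^N$ and the translation invariance \eqref{eq:2031} forces $\varphi$ to be constant, whence $\Theta_u(y) = E(\varphi, B_1) = 0$.

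The only mildly delicate point I anticipate is in $(i) \Rightarrow (iii)$: I need to verify that the locally uniform convergence of the rescalings produced by H\"older continuity is compatible with the $L^2_{loc}$ notion of tangent map used in Definition \ref{def:6.1}. This is routine: the uniform bound $\G(u_{y,R_k}, u(y)) \le C$ coming from compactness of $\MN$ together with the pointwise decay above gives $L^2_{loc}$ convergence by dominated convergence, and the resulting constant map is automatically the unique tangent map along the chosen sequence. Everything else is a one-line consequence of tools already established in Sections \ref{sec:compactness_of_energy_minimizing_maps}--\ref{sub:properties_of_homogeneous_degree_zero_minimizers}.
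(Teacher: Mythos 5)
Your proposal is correct and follows essentially the same route as the paper: the $\epsilon$-regularity Lemma \ref{lem:5.102} gives (ii) $\Rightarrow$ (i), the H\"older decay of the rescalings $u_{y,R_k}$ kills the oscillation of any tangent map for (i) $\Rightarrow$ (iii), and the energy identity \eqref{eq:6.001} together with the homogeneity/translation-invariance facts \eqref{eq:203}--\eqref{eq:2031} handles the remaining implications. Routing the last step as (ii) $\Leftrightarrow$ (iv) instead of the paper's (iii) $\Leftrightarrow$ (iv) is only a cosmetic reordering of the same argument.
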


\begin{proof}
\emph{ (i) $\Rightarrow$ (iii):} Let $\varphi$ be any tangent map of $u$ at $y$. Passing to a subsequence we have $u_{y,R_k}(x)=u(y + R_k x)$ converging locally a.e. to $\varphi$. Hence for a.e. $x,x'$ we have
\[
	\G(\varphi(x),\varphi(x'))= \lim_{k \to \infty} \G(u(y+ R_k x), u(y+ R_k x')) \le \liminf_{k \to \infty} C R_k^\alpha \abs{x-x'} =0.
\]
Thus $\varphi \equiv const.$.\\
\emph{ (ii) $\Leftrightarrow$ (iii) :} This equivalence is obvious.\\
\emph{ (iii) $\Leftrightarrow$ (iv) :} This equivalence just follows by definition and the last observation in the previous section.\\
\emph{ (ii) $\Rightarrow$ (i) :} If $\Theta_u(y)=0$ there is a $R>0$ s.t. $E(u,B_R(y))<\epsilon_0$, where $\epsilon_0>0$ is the constant of lemma \ref{lem:5.101}. Then this lemma states $u \in C^{0,\alpha}(B_\frac{R}{2}(y))$ and so $y \in \reg_Hu$.
\end{proof}

\begin{remark}\label{rem:6.401}
For single valued, "classical" harmonic functions, lemma \ref{lem:6.401} implies
\[ \reg u = \reg_H u \text{ and so } \sing u = \sing_H u \].
\end{remark}

Furthermore lemma \ref{lem:6.401} has the following simple consequences as in the single valued setting. 

\begin{lemma}\label{lem:6.402}
$\mathcal{H}^{N-2}(\sing_H u) =0$
\end{lemma}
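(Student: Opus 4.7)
The plan is the classical Vitali covering argument on $\sing_H u$, powered by the $\epsilon$-regularity Lemma \ref{lem:5.102}. The constant $\epsilon_0$ of that lemma, combined with the monotonicity formula \eqref{eq:2.107}, gives the implication $y \in \sing_H u \Rightarrow \Theta_u(y) \geq \epsilon_0$, equivalently
\[
	\int_{B_r(y)} \abs{Du}^2 \geq \epsilon_0 \, r^{N-2} \quad \text{for every } y \in \sing_H u \text{ and every } B_r(y) \subset \Omega.
\]
Indeed if this failed for some $r$ then Lemma \ref{lem:5.102} would put $y$ into $\reg_H u$.

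Before running the cover I would note that $\sing_H u$ has Lebesgue measure zero. This is immediate from Lebesgue differentiation applied to $\abs{Du}^2 \in L^1_{loc}(\Omega)$: at every Lebesgue point $y$ of $\abs{Du}^2$, the density $\abs{B_r}^{-1} \int_{B_r(y)} \abs{Du}^2$ has a finite limit as $r \to 0$, so $E(u, B_r(y))$ behaves like $O(r^2) \to 0$; hence $\Theta_u(y)=0$ and the first step places $y$ in $\reg_H u$.

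Finally, for a compact $K \subset\subset \Omega$ and $\delta > 0$ small, I would select via Vitali a collection $\{B_{r_i}(y_i)\}$ with $y_i \in \sing_H u \cap K$, $r_i < \delta$, covering $\sing_H u \cap K$, and with $\{B_{r_i/5}(y_i)\}$ pairwise disjoint and contained in $\Omega$. Summing the density lower bound and using disjointness yields
\[
	\sum_i r_i^{N-2} \leq C(N)\,\epsilon_0^{-1} \int_{V_\delta} \abs{Du}^2,
\]
where $V_\delta$ is the $\delta$-neighbourhood of $\sing_H u \cap K$. The left-hand side controls the Hausdorff pre-measure $\mathcal{H}^{N-2}_{C\delta}(\sing_H u \cap K)$, while the right-hand side vanishes as $\delta \to 0$ by absolute continuity of the integral together with $\abs{V_\delta} \to 0$ from the previous paragraph. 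Letting $\delta \to 0$ and then exhausting $\Omega$ by compact sets $K$ gives the claim. There is no essential obstacle beyond what has already been done; this is the standard single-valued argument (cf.\ \cite[Section 2.7]{Simon}), and all of the $Q$-valued difficulty has been absorbed into the $\epsilon$-regularity Lemma \ref{lem:5.102}.
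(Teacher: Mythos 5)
Your argument is correct and is precisely the ``classical consequence'' the paper invokes: the $\epsilon$-regularity lemma plus monotonicity gives $\Theta_u(y)\ge\epsilon_0$ on $\sing_H u$, and the standard density/Vitali covering argument for an $L^1$ function (together with $\abs{\sing_H u}=0$) then forces $\mathcal{H}^{N-2}(\sing_H u)=0$. You have simply written out the details that the paper leaves to the reader, so there is nothing to add.
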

\begin{proof}
This is a classical consequence of $\abs{Du}^2$ being in $L^1$ and $\sing_H u = \{ y\,:\, \Theta_u(y) > \epsilon_0 \}$.
\end{proof}

One defines
\begin{equation}\label{eq:6.401}
S_j=\{ y \in \sing_H u \colon \operatorname{dim}S(\varphi) \le j \text{ for all tangent maps $\varphi$ at $y$} \}. 
\end{equation}

We first observe that
\begin{equation}\label{eq:6.402}
	\sing_H u = S_{N-1}= S_{N-2} =S_{N-3}.
\end{equation}
Indeed, suppose not. Then there would be a tangent map $\varphi$, which is a non constant homogenous degree zero minimizer with $N-1 \ge \operatorname{dim}S(\varphi) \ge N-2$. This contradicts lemma \ref{lem:6.402} because $S(\varphi) \subset  \sing_H \varphi$ and 
\[
	+ \infty = \mathcal{H}^{N-2}( S(\varphi)) = \mathcal{H}^{N-2}(\sing_H u).
\]

As L.~Simon mentions in \cite[section 3.4]{Simon} one notice:
\begin{quote}
"The subsets $S_j$ are mainly important because of the following lemma, which is a direct modification of the corresponding result for minimal surfaces by F.~Almgren \cite{Almgren2}; the lemma can be thought of as a refinement of the "dimension reducing" argument of Federer \cite{Federer} (for this see also the discussion in the appendix of \cite{Simon2}). "\footnote{L.~Simon, \cite{Simon}, page 54}
\end{quote}

Classically a characterization of $S_j$ implies a $\delta$- approximation property which then itself implies the following two results. Their classical proofs can be found in \cite[section 3.4, Lemma 1 \& Corollary 1]{Simon}
\begin{lemma}\label{lem:6.403}
For each $j=0, \dotsc, N-3$, $\dim S_j \le j$, and for each $t > 0$, $S_0 \cap \{y \, :\, \Theta_u(y)=t \} $ is a discrete set.
\end{lemma}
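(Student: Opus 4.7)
The plan is to follow the classical Almgren--Federer dimension reduction, with the tangent-map analysis of Section 6.1 as the main input and Lemma \ref{lem:4.1} supplying the required compactness. Two preparatory facts are needed. First, $\Theta_u$ is upper semicontinuous: the monotonicity formula \eqref{eq:2.107} gives $\Theta_u(y) = \inf_{r>0} E(u,B_r(y))$, and since $y \mapsto E(u,B_r(y))$ is continuous for fixed $r$, $\Theta_u$ is an infimum of continuous functions. Second, a \emph{density-convergence lemma}: if $\rho_k \downarrow 0$, $z_k \in B_{\rho_k}(y)$, $\xi_k := \rho_k^{-1}(z_k-y) \to \xi$, and $\Theta_u(z_k) \to \Theta_u(y)$, then some subsequence of the blow-ups $u_{y,\rho_k}(x) := u(y + \rho_k x)$ converges, via Lemma \ref{lem:4.1}, to a tangent map $\varphi$ at $y$, and $\xi \in S(\varphi)$. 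The last point uses strong convergence of energy on balls together with upper semicontinuity of $\Theta$ to get $\Theta_\varphi(\xi) \ge \lim_k \Theta_u(z_k) = \Theta_u(y) = \Theta_\varphi(0)$, while \eqref{eq:6.202} provides the reverse inequality.

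The discreteness assertion follows immediately from this second lemma. If some $y \in S_0 \cap \{\Theta_u = t\}$ were accumulated by a sequence $y_k \ne y$ in the same set, set $\rho_k := |y_k - y|$ and $\xi_k := (y_k - y)/\rho_k \in \Sp^{N-1}$; pass to a subsequence with $\xi_k \to \xi$. Since $\Theta_u(y_k) = t = \Theta_u(y)$ \emph{exactly}, the density-convergence lemma produces a tangent map $\varphi$ at $y$ with $\xi \in S(\varphi)$. But $S(\varphi)$ is a linear subspace by \eqref{eq:203}, containing both $0$ and $\xi \ne 0$, so $\dim S(\varphi) \ge 1$, contradicting $y \in S_0$.

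For the bound $\dim_\h S_j \le j$, the next step is to establish the standard $\delta$-approximation property: for every $y \in S_j$ and $\delta > 0$ there exist $\rho_0, \eta > 0$ such that, for all $\rho \le \rho_0$, the set $\{ z \in B_\rho(y) : \Theta_u(z) \ge \Theta_u(y) - \eta \}$ lies within Euclidean distance $\delta \rho$ of some $j$-dimensional affine subspace through $y$. One argues by contradiction: failure produces $(j{+}1)$-tuples $z_k^0, \dots, z_k^j$ in $B_{\rho_k}(y)$ with $\rho_k, \eta_k \downarrow 0$ and $\Theta_u(z_k^i) \ge \Theta_u(y) - \eta_k$, whose rescalings $\xi_k^i := \rho_k^{-1}(z_k^i - y)$ stay uniformly affinely independent. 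After extracting a subsequence with $u_{y,\rho_k} \to \varphi$ and $\xi_k^i \to \xi^i$, the density-convergence lemma places each $\xi^i$ in $S(\varphi)$, forcing $\dim S(\varphi) \ge j+1$ and contradicting $y \in S_j$. With the $\delta$-approximation property in hand, the classical Vitali-type covering argument of \cite[Sec.\ 3.4, Lemma 1]{Simon} delivers $\dim_\h S_j \le j$.

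The main obstacle is the two-parameter limiting procedure in this last step: naively fixing $\eta > 0$ would only place the limit points $\xi^i$ in the level set $\{\xi : \Theta_\varphi(\xi) \ge \Theta_\varphi(0) - \eta\}$, which need not be a linear subspace. Simultaneously sending $\eta_k \downarrow 0$ alongside the spatial rescaling $\rho_k \downarrow 0$, while tracking $(j{+}1)$ nearly-maximal-density sequences and preserving their affine independence in the blow-up limit, is the technical crux, and follows the pattern of Simon's original treatment.
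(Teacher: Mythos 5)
Your proposal is correct and follows essentially the same route the paper takes: the paper simply invokes the classical dimension-reduction argument of Simon--Almgren (upper semicontinuity of $\Theta_u$, blow-up via the compactness lemma \ref{lem:4.1}, the $\delta$-approximation property, and the covering lemma of \cite[Section 3.4, Lemma 1]{Simon}), and your write-up reconstructs exactly this chain, including the correct use of \eqref{eq:6.202} and \eqref{eq:203} to place limit points in $S(\varphi)$. The only part you cite rather than prove (the Vitali-type covering step) is also only cited in the paper, so nothing essential is missing.
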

\begin{corollary}\label{cor:6.404}
$\operatorname{ dim } \sing_H u \le N-3$. More generally, if all tangent maps $\varphi$ of $u$ satisfy $\operatorname{dim}S(\varphi) \le j_0 \le N-3$ then $\operatorname{dim} \sing_H u \le j_0$. 
\end{corollary}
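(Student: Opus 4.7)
The plan is to deduce the corollary almost immediately from Lemma \ref{lem:6.403} together with the preparatory observation \eqref{eq:6.402} already obtained in this section. The stratification $S_0 \subset S_1 \subset \dotsb \subset S_{N-1}$ together with \eqref{eq:6.402} reduces the question about $\sing_H u$ to an estimate on one specific stratum, which is exactly what Lemma \ref{lem:6.403} provides.

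First I would handle the general statement. If every tangent map $\varphi$ of $u$ at every point $y \in \sing_H u$ satisfies $\dim S(\varphi) \le j_0$, then by the very definition \eqref{eq:6.401} of $S_j$, we have the inclusion $\sing_H u \subset S_{j_0}$. Applying Lemma \ref{lem:6.403} with $j = j_0$ (which is permitted since by hypothesis $j_0 \le N-3$) gives $\dim_{\mathcal{H}} S_{j_0} \le j_0$, and monotonicity of Hausdorff dimension under inclusion yields $\dim_{\mathcal{H}} \sing_H u \le j_0$.

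The first assertion is then the special case obtained without any extra assumption on tangent maps. By \eqref{eq:6.402} we know $\sing_H u = S_{N-3}$ unconditionally, which is precisely the statement that at every point of $\sing_H u$ some (equivalently, every) tangent map satisfies $\dim S(\varphi) \le N-3$; this was the content of the earlier argument ruling out $\dim S(\varphi) \in \{N-2, N-1\}$ via Lemma \ref{lem:6.402}. Thus the general statement applies with $j_0 = N-3$, yielding $\dim_{\mathcal{H}} \sing_H u \le N-3$.

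There is no real obstacle here: all the substantive work is contained in Lemma \ref{lem:6.403} (which in turn rests on the Federer--Almgren dimension reduction argument and the $\delta$-approximation property) and in the identity $\sing_H u = S_{N-3}$ of \eqref{eq:6.402}, which we already established using the maximality of $\Theta_\varphi$ at $0$ for homogeneous minimizers and the translation invariance \eqref{eq:2031} along $S(\varphi)$. The corollary itself is a one-line assembly of these pieces.
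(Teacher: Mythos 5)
Your argument is correct and is essentially the paper's (classical) one: the corollary is exactly the assembly of the identity $\sing_H u=S_{N-3}$ from \eqref{eq:6.402} (resp.\ the inclusion $\sing_H u\subset S_{j_0}$ under the hypothesis on tangent maps) with the dimension bound $\dim S_j\le j$ of Lemma \ref{lem:6.403}, which the paper likewise treats as the only substantive input, citing the Federer--Almgren dimension reduction as in Simon. No gap; your reading of the hypothesis as referring to tangent maps at points of $\sing_H u$ is the intended one.
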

This corollary clearly shows theorem \ref{theo:0.1}.

\begin{appendix}
\section{Q-valued functions} 
\label{sec_I:Q-valued functions}
As announced in the introduction we recall the basic definitions and results on $Q$-valued functions needed in the article. The theory is presented omitting the actual proofs. They can be found for instance in C.~De Lellis and E.~Spadaro's work \cite{Lellis}. 

As mentioned we follow mainly the notation and terminology introduced by C.~De Lellis and E.~Spadaro in \cite{Lellis}. It differs slightly from Almgren's original one. $Q, Q_1, Q_2, \dotsc$ are always natural numbers.\\
The space of unordered sets of $Q$ points in $\R^n$ can be made into a complete metric space.
\begin{definition}\label{def_I:1.101}
	$\left(\A_Q(\R^n), \G \right)$ denotes the metric space of unordered $Q$-tuples given by
	\begin{equation*}
	\mathcal{A}_Q(\R^n)= \left\{ T= \sum_{i=1}^Q \llbracket t_i \rrbracket \colon t_i \in \R^n, i = 1, \dotsc, Q \right\}
	\end{equation*}
	and if $\mathcal{P}_Q$ is the permutation group of $\{1, \dotsc, Q\}$ the metric is given by
	\begin{equation*}
		\G(S,T)^2= \min_{\sigma \in \mathcal{P}_Q} \sum_{i=1}^Q \abs{s_i - t_{\sigma(i)}}^2.
	\end{equation*}
\end{definition}
We use the convention $\llbracket t \rrbracket = \delta_t$ for a Dirac measure at a point $t \in \R^n$. Considering $T=\sum_{i=1}^Q \llbracket t_i \rrbracket$ as a sum of $Q$ Dirac measures one notice that $\A_Q(\R^n)$ corresponds to the set of  $0$-dimensional integral currents of mass $Q$ and positive orientation. Hence we will write 
\[
	spt(T)=\{ t_1, \dots, t_Q \colon T=\sum_{i=1}^Q \llbracket t_i \rrbracket \} \subset \R^n.
\]
 
Furthermore $\A_Q(\R^n)$ is endowed with an intrinsic addition:
\[
	+\colon \A_{Q_1}(\R^n) \times \A_{Q_2}(\R^n) \to \A_{Q_1+Q_2}(\R^n) \quad S+T = \sum_{i=1}^{Q_1} \llbracket s_i \rrbracket + \sum_{i=1}^{Q_2} \llbracket t_i \rrbracket.
\]
We define a translation operator
\[
	\oplus\colon A_Q(\R^n) \times \R^n \to \A_Q(\R^n) \quad T\oplus s = \sum_{i=1}^Q \llbracket t_i + s \rrbracket.
\]
The metric $\G$ defines continuity, modulus of continuity, H\"older and Lipschitz continuity and (Lebesgue) measurability for functions from a set $\Omega \subset \R^N$ into $\A_Q(\R^n)$, i.e.$u: \Omega \to \A_Q(\R^n)$. \\
As it has been shown in \cite[Proposition 0.4]{Lellis} for any measurable function $u: \Omega\to \A_Q(\R^n)$ we can find a measurable selection i.e.
\[
	v=(v_1, \dotsc , v_Q): \Omega \to (\R^n)^Q \text{ measurable s.t. } u(x)=[v](x)=\sum_{i=1}^Q \llbracket v_i(x) \rrbracket.
\]
Selections of higher regularity are considered in \cite{Lellis select}, \cite[Proposition 1.2]{Lellis} and in the appendix to \cite{H1}.\\
We will write $\abs{u(x)}=\sqrt{\sum_{i=1}^Q \abs{v_i(x)}^2}=\G(u(x),Q\llbracket 0 \rrbracket)$.
\begin{definition}\label{def_I:1.102}
	The Sobolev space $W^{1,2}(\Omega, \A_Q(\R^n))$ is defined as the set of measurable functions $u: \Omega\subset \to \A_Q(\R^n)$ that satisfy 
	\begin{itemize}
		\item[(w1)] $x \mapsto \G(u(x),T) \in W^{1,2}(\Omega, \R_+)$ for every $T \in \A_Q(\R^n)$;
		\item[(w2)] $\exists \varphi_j \in L^2(\Omega, \R_+)$ for $j=1, \dots, N$ s.t. $\abs{D_j\G(u(x),T)} \le \varphi_j(x)$ for any $T \in \A_Q(\R^n)$ and a.e. $x \in \Omega$.
	\end{itemize}
\end{definition}
It is not difficult to show the existence of minimal functions $\tilde{\varphi}_j$, in the sense that $\tilde{\varphi}_j(x)\le \varphi_j(x)$ for a.e. $x$ and any $\varphi_j$ satisfying property (w2), \cite[Proposition 4.2]{Lellis}. Such a minimal bound is denoted by $\abs{D_ju}$ and is explicitly characterised by
\[
	\abs{D_ju}(x)= \sup\left\{ \abs{D_j \G(u(x), T_i)}\colon \{T_i\}_{i\in \N} \text{ dense in } \A_Q(\R^n)\right\}.
\]
The Sobolev "semi-norm", or Dirichlet energy, is defined by integrating the measurable function $\abs{Du}^2=\sum_{j=1}^N \abs{D_ju}^2$:
\begin{equation}\label{eq_I:1.101}
	\int_\Omega \abs{Du}^2 = \int_{\Omega} \sum_{j=1}^J \abs{D_ju}^2.
\end{equation} 
Strictly speaking it is not a "semi-norm". $W^{1,2}(\Omega, \A_Q(\R^n))$ is not a linear space since $\A_Q(\R^n)$ lacks this property.\\
A function $u \in W^{1,2}(\Omega, \R^n)$ is said to be Dirichlet minimizing if
\begin{equation}\label{eq_I:1.102}
	\int_{\Omega} \abs{Du}^2= \inf \left\{ \int_{\Omega} \abs{Dv}^2 \colon v \in W^{1,2}(\Omega, \A_Q(\R^n)), \G(u(x), v(x)) \in W^{1,2}_0(\Omega, \R_+) \right\}.
\end{equation}

On Lipschitz regular domains $\Omega \subset \R^N$ one has a continuous trace operator as for classical single valued Sobolev functions
\[
	\circ\tr{\partial \Omega}: W^{1,2}(\Omega, \A_Q(\R^n)) \to L^2(\partial \Omega, \A_Q(\R^n)).
\]
The definition of $W^{1,2}(\Omega, \A_Q(\R^n))$, definition \ref{def_I:1.102}, implies that on a Lipschitz regular domain $\Omega \subset \R^N$ one has that $\G(u(x),v(x)) \in W^{1,2}_0(\Omega)$ corresponds to $u\tr{\partial \Omega}=v\tr{\partial \Omega}$ for any $u,v \in W^{1,2}(\Omega, \A_Q(\R^n))$.\\

As a consequence of a Rademacher theorem for multivalued Lipschitz functions, \cite[section 1.3 \& Theorem 1.13]{Lellis} a Sobolev function $u \in W^{1,2}(\Omega, \A_Q(\R^n))$ is a.e. approximately differentiable in the sense
\begin{itemize}
	\item[(1)] $\exists \mathcal{U}_x: \Omega \to \A_Q(\R^n \times Hom(\R^N, \R^n))$, $x \mapsto \mathcal{U}_x= \sum_{i=1}^Q \llbracket (u_i(x), U_i(x)) \rrbracket$ measurable with $U_i(x)=U_j(x)$ whenever $u_i(x)=u_j(x)$;
	\item[(2)] $\mathcal{U}_x$ defines a 1-jet $J\mathcal{U}_x: \Omega \times \R^N \to \A_Q(\R^n)$ by $J\mathcal{U}_x(y)=\sum_{i=1}^Q \llbracket u_i(x) + U_i(x)(y-x) \rrbracket$, that has the additional property that $J\mathcal{U}_x(x)= u(x)$ for a.e. $x \in \Omega$;\\
	\item[(3)] for a.e. $x\in \Omega$,  $\exists E_x \subset \Omega$ having density $1$ in $x$ s.t.  $\G(u(y), J\mathcal{U}_x(y))=o(\abs{y-x})$ on $E_x$. 
\end{itemize}
As one may guess the 1-jet corresponds to a first order "Taylor expansion", that becomes apparent in the proof of Rademacher's theorem, \cite[Theorem 1.13]{Lellis}. 
One can show that $\abs{D_ju}(x)= \sum_{i=1}^Q \abs{U_i(x)e_j}^2$ for a.e. $x \in \Omega$, \cite[Proposition 2.17]{Lellis}. From now on we will write $Du_i(x)$ for $U_i(x)$ and $D_ju_i(x)$ for $U_i(x)e_j$.\\

A useful tool is Almgren's bi-Lipschitz embedding of $\A_Q(\R^n)$ into some $\R^N$. A remark of Brian White improved it, compare \cite[Theorem 2.1 \& Corollary 2.2]{Lellis}:

\begin{theorem}[bi-Lipschitz embedding]\label{theo_I:1.101}
	There exists $m=m(Q,n)$ and an injective map $\boldsymbol{\xi}: \A_Q(\R^n) \to \R^m$ with the properties
	\begin{itemize}
		\item[(i)] $Lip(\boldsymbol{\xi})\le 1$ and $Lip(\boldsymbol{\xi}^{-1}\vert_{\boldsymbol{\xi}(\A_Q(\R^n))})\le C(Q,n)$;
		\item[(ii)]  $\forall T \in \A_Q(\R^n)$ $\exists\delta=\delta(T) >0$ such that $\abs{\boldsymbol{\xi}(T)-\boldsymbol{\xi}(S)}=\G(T,S)$ for all $S \in B_{\delta}(T) \subset \A_Q(\R^n)$.
	\end{itemize}
There is a retraction $\boldsymbol{\rho}: \R^m \to \A_Q(\R^n)$ because of (i) and the Lipschitz extension Theorem, e.g. \cite[Theorem 1.7]{Lellis}.
\end{theorem}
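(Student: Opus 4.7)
The plan is to construct $\boldsymbol{\xi}$ as a weighted concatenation of sorted directional projections, following Almgren's strategy with White's refinement that upgrades the global $1$-Lipschitz bound to an honest local isometry. For a unit vector $v \in \R^n$ I define the sorted projection $\phi_v : \A_Q(\R^n) \to \R^Q$ by $\phi_v(T) = (a_1, \ldots, a_Q)$ with $a_1 \le \cdots \le a_Q$ the scalars $\langle t_i, v\rangle$ sorted in non-decreasing order. The elementary rearrangement inequality says that the sorted pairing minimises sums of squared differences, so for every permutation $\sigma$
\[
\abs{\phi_v(T) - \phi_v(S)}^2 = \min_{\tau \in \mathcal{P}_Q} \sum_{i=1}^Q \langle t_i - s_{\tau(i)}, v\rangle^2 \le \sum_{i=1}^Q \langle t_i - s_{\sigma(i)}, v\rangle^2 .
\]

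I then pick a finite family $\{v_j\}_{j=1}^K$ of unit vectors and positive weights $\{\lambda_j\}$ satisfying the Parseval-type identity
\[
\sum_{j=1}^K \lambda_j^2\, v_j \otimes v_j \;=\; \mathrm{Id}_{\R^n},
\]
and set $\boldsymbol{\xi}(T) = (\lambda_1 \phi_{v_1}(T), \ldots, \lambda_K \phi_{v_K}(T)) \in \R^{KQ}$. Taking $\sigma = \sigma^\ast$ to be the matching realising $\G(T,S)^2$ and writing $d_i = t_i - s_{\sigma^\ast(i)}$, the previous inequality summed with weights $\lambda_j^2$ gives
\[
\abs{\boldsymbol{\xi}(T) - \boldsymbol{\xi}(S)}^2 \le \sum_{i=1}^Q d_i^\top \Bigl(\sum_{j=1}^K \lambda_j^2 v_j\otimes v_j\Bigr) d_i = \sum_{i=1}^Q \abs{d_i}^2 = \G(T,S)^2,
\]
so part (i) with Lipschitz constant $\le 1$ is automatic. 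For the lower Lipschitz bound I would observe that on each of the finitely many ``sorting chambers'' of $(\R^n)^Q/\mathcal{P}_Q$ determined by the orderings of the projections onto each $v_j$, the map $\boldsymbol{\xi}$ is piecewise linear; for directions in sufficiently general position each chamber-restricted linear map is injective, and a compactness and scaling argument on the quotient modulo translation and dilation upgrades chamberwise injectivity to a uniform bi-Lipschitz constant $C(Q,n)$.

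The main obstacle is (ii). Given $T = \sum_{i=1}^Q \llbracket t_i\rrbracket$, I group its support into clusters of coinciding points with multiplicities $Q_1, \ldots, Q_L$; for $S$ sufficiently close to $T$ one writes $S = \sum_{i=1}^Q \llbracket t_i + \epsilon_i\rrbracket$ with $\abs{\epsilon_i}$ small, the $\G$-optimal matching decomposes blockwise along clusters, and since $\sum_i \abs{\epsilon_i}^2$ is invariant under permutations within a cluster one obtains $\G(T,S)^2 = \sum_i \abs{\epsilon_i}^2$. The delicate expansion to carry out is
\[
\abs{\phi_{v_j}(T) - \phi_{v_j}(S)}^2 = \sum_{i=1}^Q \langle \epsilon_i, v_j\rangle^2
\]
on a neighbourhood of $T$, including the degenerate cases of coincident points of $T$ or ``tied'' projections $\langle t_i, v_j\rangle = \langle t_{i'}, v_j\rangle$. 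Both are handled by noting that taking max/min over projections reshuffles values but preserves their squared sum, and in the cluster case the extra freedom to permute within clusters exactly matches the permutation ambiguity of $\phi_{v_j}$. Summing over $j$ with the Parseval identity yields
\[
\abs{\boldsymbol{\xi}(T) - \boldsymbol{\xi}(S)}^2 = \sum_{i=1}^Q \epsilon_i^\top \Bigl(\sum_{j=1}^K \lambda_j^2 v_j \otimes v_j\Bigr)\epsilon_i = \sum_{i=1}^Q \abs{\epsilon_i}^2 = \G(T,S)^2,
\]
the desired local equality. A Parseval-compatible family can be produced by choosing $v_1, \ldots, v_K$ to be the orbit of a generic unit vector under a finite group acting irreducibly on $\R^n$, which also meets the general-position requirement from the injectivity step. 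Finally the retraction $\boldsymbol{\rho}$ is obtained by applying the Kirszbraun/McShane Lipschitz extension theorem to $\boldsymbol{\xi}^{-1}$ restricted to $\boldsymbol{\xi}(\A_Q(\R^n))$.
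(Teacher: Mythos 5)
You should first note that the paper itself does not prove this theorem: it is quoted from Almgren via \cite[Theorem 2.1 \& Corollary 2.2]{Lellis}, so your proposal has to stand on its own. The parts of your outline that work are the ones that are standard: the sorted directional projections with a tight-frame (Parseval) family $\sum_j\lambda_j^2 v_j\otimes v_j=\mathrm{Id}$ give $Lip(\boldsymbol{\xi})\le 1$ exactly as you compute, and your argument for (ii) — for $S$ in a small ball around a fixed $T$ the optimal matching decomposes along the clusters of $T$, and ties among the projected values of $T$ are harmless because tied values contribute the same cost under any pairing — is essentially White's observation and is correct once the radius $\delta(T)$ is taken small compared with the minimal separation of $\operatorname{spt}(T)$ and the minimal gap between distinct projected values of $T$ over the finitely many chosen directions.

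The genuine gap is the lower Lipschitz bound, which is the heart of Almgren's theorem, and your argument for it would fail. Chamberwise injectivity of the piecewise-linear lift is cheap (on each chamber the map splits particlewise into $t_k\mapsto(\lambda_j\langle t_k,v_j\rangle)_j$, injective as soon as the $v_j$ span $\R^n$) but it does not give injectivity of $\boldsymbol{\xi}$, let alone a quantitative inverse bound: for $Q=2$, $n=2$ and any two directions $v_1,v_2$ (coordinate axes, say), $T=\llbracket(0,0)\rrbracket+\llbracket(1,1)\rrbracket$ and $S=\llbracket(1,0)\rrbracket+\llbracket(0,1)\rrbracket$ have identical sorted projections although $T\neq S$; the same ``crossed pair'' exists for any two non-parallel directions, so genericity cannot substitute for having sufficiently many directions (in the known proof their number depends on $Q$ and $n$). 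What the actual argument needs, and what is missing from your sketch, is a family $\{v_j\}$ such that for every pair $T,S$ some $v_j$ satisfies $\abs{\langle v_j,w\rangle}\ge\alpha\abs{w}$ simultaneously for all $Q^2$ difference vectors $w=t_i-s_k$; this yields $\min_\tau\sum_i\langle t_i-s_{\tau(i)},v_j\rangle^2\ge\alpha^2\,\G(T,S)^2$ uniformly in the permutation $\tau$, which is the crux, since the permutation optimal for the projected values need not be the optimal matching in $\R^n$. Your proposed ``compactness and scaling argument modulo translation and dilation'' does not close this either: after normalising $\G(T,S)=1$ and centering, the configuration space is still non-compact (clusters of $T$ may separate to infinity), and the standard remedy — splitting well-separated clusters and inducting on $Q$ — is absent; moreover compactness could only give a positive lower bound after global injectivity is established, which is exactly what is unproven. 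A minor final point: the retraction $\boldsymbol{\rho}$ cannot be obtained from Kirszbraun/McShane, because the target $\A_Q(\R^n)$ is neither $\R$ nor a Hilbert space; one needs the Lipschitz extension theorem for $Q$-valued maps, \cite[Theorem 1.7]{Lellis}, exactly as the statement indicates.
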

As a consequence $\abs{Du}(x)=\abs{D\boldsymbol{\xi}\circ u}(x)$ for a.e. $x \in \Omega$ for any $u \in W^{1,2}(\Omega, \mathcal{A}_Q(\R^n))$.\\
We want to remark that the image of $\A_Q(\R^n)$ under $\boldsymbol{\xi}$ in $\R^m$ is not convex neither a $C^2$ manifold. Thus there is no "nearest point" projection not even in a tubular neighborhood.

Two cornerstones in the context of Dirichlet minimizers mapping into $\R^n$ that are of interest for us are (c.p. with \cite[Theorem 0.8 \& Theorem 0.9]{Lellis}):
.
\begin{theorem}[Existence of Dirichlet minimizers]\label{theo_I:1.102}
	Let $v\in W^{1,2}(\Omega, \A_Q(\R^n))$ be given, then there exists a (not necessarily unique) Dirichlet minimizing $u\in W^{1,2}(\Omega, \A_Q(\R^n))$ with $\G(u(x),v(x)) \in W^{1,2}_0(\Omega,\R_+)$.  
\end{theorem}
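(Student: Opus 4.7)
The plan is to apply the direct method of the calculus of variations, using Almgren's bi-Lipschitz embedding $\boldsymbol{\xi}$ from Theorem \ref{theo_I:1.101} to reduce compactness and lower semicontinuity to the classical single-valued case. First I would set
\[
I = \inf \left\{ \int_\Omega \abs{Dw}^2 \colon w \in W^{1,2}(\Omega, \A_Q(\R^n)),\ \G(w,v) \in W^{1,2}_0(\Omega,\R_+)\right\},
\]
which is finite because $v$ itself is admissible, and pick a minimizing sequence $\{u_k\}$ with $\int_\Omega \abs{Du_k}^2 \to I$.

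Next I would push everything forward by $\boldsymbol{\xi}$. Set $w_k = \boldsymbol{\xi}\circ u_k$ and $\tilde v = \boldsymbol{\xi}\circ v$. The identity $\abs{Du_k}(x)=\abs{D(\boldsymbol{\xi}\circ u_k)}(x)$ recalled after Theorem \ref{theo_I:1.101} gives a uniform $L^2$ bound on $\nabla w_k$. For the $L^2$ bound on $w_k$ itself, use that $\Abs{w_k - \tilde v}\le \G(u_k,v)\in W^{1,2}_0(\Omega)$ together with a Poincaré inequality. Thus $\{w_k\}$ is bounded in $W^{1,2}(\Omega,\R^{\tilde m})$; extract a subsequence converging weakly in $W^{1,2}$, strongly in $L^2$ and a.e. to some $w\in W^{1,2}(\Omega,\R^{\tilde m})$.

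Then I would identify $w$ with an $\A_Q$-valued map. The image $\boldsymbol{\xi}(\A_Q(\R^n))$ is closed in $\R^{\tilde m}$: if $\boldsymbol{\xi}(T_j)\to z$, then by property (i) of Theorem \ref{theo_I:1.101} the sequence $\{T_j\}$ is Cauchy in the complete metric space $(\A_Q(\R^n),\G)$, hence converges to some $T$ with $\boldsymbol{\xi}(T)=z$. From the a.e. convergence $w_k(x)\to w(x)$ it follows that $w$ takes values in $\boldsymbol{\xi}(\A_Q(\R^n))$ a.e., so $u(x):=\boldsymbol{\xi}^{-1}(w(x))$ defines a measurable $\A_Q$-valued map, and the Lipschitz estimate on $\boldsymbol{\xi}^{-1}$ upgrades strong $L^2$ convergence of $w_k$ to $\G(u_k,u)\to 0$ in $L^2$.

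Finally I would check minimality and the boundary condition. Using again $\abs{Du}=\abs{D(\boldsymbol{\xi}\circ u)}=\abs{Dw}$, weak lower semicontinuity of the Dirichlet integral in $W^{1,2}(\Omega,\R^{\tilde m})$ gives
\[
\int_\Omega \abs{Du}^2 = \int_\Omega \abs{Dw}^2 \le \liminf_{k\to\infty} \int_\Omega \abs{Dw_k}^2 = \liminf_{k\to\infty} \int_\Omega \abs{Du_k}^2 = I,
\]
so $u$ is a Dirichlet minimizer, once the boundary trace is correct. For the latter, the scalar functions $\G(u_k,v)$ lie in $W^{1,2}_0(\Omega)$, are bounded in $W^{1,2}(\Omega)$ (their weak gradient is controlled by $\abs{Du_k}+\abs{Dv}$ via the metric triangle inequality and property (w2) of Definition \ref{def_I:1.102}), and converge in $L^2$ to $\G(u,v)$; thus the convergence is weak in $W^{1,2}$ and, since $W^{1,2}_0$ is weakly closed, $\G(u,v)\in W^{1,2}_0(\Omega,\R_+)$. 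The main technical point to verify carefully is the closure and identifiability of the weak limit as an $\A_Q$-valued map and the corresponding energy identity; everything else is a direct transcription of the classical direct method.
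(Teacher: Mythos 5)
Your argument is correct and is essentially the standard proof: the paper itself states this theorem without proof, citing Almgren and \cite{Lellis}, and the proof given there is exactly this direct method transferred to the single-valued setting through the bi-Lipschitz embedding $\boldsymbol{\xi}$ (minimizing sequence, $L^2$-compactness and a.e. convergence, identification of the limit using that $\boldsymbol{\xi}(\A_Q(\R^n))$ is closed, lower semicontinuity via $\abs{Du}=\abs{D(\boldsymbol{\xi}\circ u)}$, and weak closedness of $W^{1,2}_0$ for the admissibility of the limit). The only caveat is that your appeal to the Poincar\'e inequality and Rellich compactness requires $\Omega$ to be bounded, which is the setting tacitly assumed here and explicit in the cited reference.
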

and the already stated 
\begin{theorem}[interior H\"older continuity]\label{theo_I:1.103}
There is a constant $\alpha_0=\alpha_0(N,Q)>0$ with the property that if $u \in W^{1,2}(\Omega, \A_Q(\R^n))$ is Dirichlet minimizing, then $u\in C^{0,\alpha_0}(K, \A_Q(\R^n))$ for any $K\subset \Omega\subset \R^N$ compact. Indeed, $\abs{Du}$ is an element of the Morrey space $L^{2,N-2-2\alpha_0}$ with the estimate
\begin{equation}\label{eq_I:1.103}
	r^{2-N-2\alpha_0} \int_{B_r(x)} \abs{Du}^2 \le R^{2-N-2\alpha_0} \int_{B_R(x)} \abs{Du}^2 \text{ for } r\le R, B_R(x) \subset \Omega.
\end{equation}
For two-dimensional domains $\alpha_0(2,Q)=\frac{1}{Q}$ is explicit and optimal.
\end{theorem}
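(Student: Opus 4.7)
The plan is to reduce the Morrey-type decay \eqref{eq_I:1.103} to a single-scale improvement estimate and to establish the latter by contradiction through a blow-up/compactness argument. More precisely, by Campanato's characterization of H\"older continuous functions it is enough to produce $\theta\in(0,1/2)$, $\gamma\in(0,1)$, and then to set $2\alpha_0:=-\log\gamma/\log(\theta^{-1})$, such that for every Dirichlet minimizing $u\in W^{1,2}(B_1,\A_Q(\R^n))$ one has
\begin{equation}\label{eq:decayPP}
E(u,B_\theta)\le \gamma\,E(u,B_1).
\end{equation}
Once \eqref{eq:decayPP} is available, iteration combined with the classical monotonicity formula (proven from inner variations exactly as in \eqref{eq:2.107}, with the second fundamental form term absent because the target is $\R^n$) produces $E(u,B_r(x))\le C(r/R)^{2\alpha_0}E(u,B_R(x))$, whence $|Du|\in L^{2,N-2-2\alpha_0}$ and $u\in C^{0,\alpha_0}_{\mathrm{loc}}$ via a standard Campanato--Morrey argument applied to the $W^{1,2}$ function $x\mapsto \G(u(x),T)$ for a dense family of $T$.

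To prove \eqref{eq:decayPP} I would argue by contradiction and reproduce, in the purely euclidean setting, the compactness/blow-up scheme employed in lemmas \ref{lem:4.1}--\ref{lem:5.102}. Fix $\gamma\in(2^{-2\alpha_0},1)$ and suppose there is a sequence of Dir-minimizers $u_k$ on $B_1$ with $E(u_k,B_{1/2})\ge \gamma\,E(u_k,B_1)$. Translate each $u_k$ by a well-chosen ``mean'' $T_k\in\A_Q(\R^n)$ (e.g.\ a minimizer of $S\mapsto \int_{B_1}\G(u_k,S)^2$, controlled by the Dirichlet energy via Poincar\'e in the $Q$-valued setting) and rescale by $\sigma_k^2:=E(u_k,B_1)$ to obtain a sequence of Dir-minimizers with bounded energy and bounded $L^2$-norm. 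The crucial step is to apply the concentration-compactness principle (\cite[Lemma 3.2]{Lellis Lp}; the analogue invoked in lemma \ref{lem:5.101}) to extract a decomposition $u_k\to \sum_{l=1}^L a_l\oplus p_l(k)$ in $L^2_{\mathrm{loc}}$, with each $a_l\in W^{1,2}(B_1,\A_{Q_l}(\R^n))$ a Dir-minimizer and $\sum Q_l=Q$, and crucially to upgrade this to strong convergence of energy on every $B_r\Subset B_1$; this is done by transplanting competitors back to $u_k$ via Luckhaus' lemma \ref{lem:3.101} (here trivially simpler since no target projection $\Pi$ is needed).

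Having established strong convergence of the energies, one passes the hypothesised estimate $E(u_k,B_{1/2})\ge \gamma E(u_k,B_1)$ to the limit and obtains a nontrivial Dir-minimizer $a_{l_0}\in W^{1,2}(B_1,\A_{Q_{l_0}}(\R^n))$ with
\[
\sum_l E(a_l,B_{1/2})\;\ge\;\gamma\,\sum_l E(a_l,B_1)\quad\text{and}\quad \sum_l E(a_l,B_r)\;\le\;\sum_l E(a_l,B_1)\ \forall r<1,
\]
hence at least one $a_l$ with $E(a_l,B_{1/2})\ge\gamma\,E(a_l,B_1)$. In dimension $N=2$ this already contradicts the explicit decay rate $\alpha_0(2,Q)=1/Q$ which one proves independently using Fourier decomposition on $\Sp^1$ and the freezing argument of \cite[Sect.~3.4]{Lellis}; in general $N\ge 3$ this is where the induction on $Q$ enters: one assumes \eqref{eq:decayPP} known for $Q'<Q$, so each $a_l$ with $Q_l<Q$ already satisfies it; while an indecomposable $Q$-valued limit ($L=1$, $Q_1=Q$, $p_1(k)\to p_\infty$) is analyzed directly using Almgren's frequency function and the homogeneous degree $\alpha_0$ structure of tangent maps as in section \ref{sub:properties_of_homogeneous_degree_zero_minimizers}.

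The main obstacle is the strong $L^2$--to--energy upgrade in the concentration-compactness step, i.e.\ showing $E(a_l,B_r)=\lim_k\sigma_k^{-2}E(u_k,B_r)$ and not merely the lower semicontinuity inequality. The argument is the one in lemma \ref{lem:5.101}: one interpolates on a good annulus using Luckhaus' lemma \ref{lem:3.101} and uses minimality of the $u_k$ against the competitor assembled from the limits $a_l$, reversing inequalities to obtain equality. A subtle point is that, unlike in the manifold-valued setting of lemma \ref{lem:5.101}, the absence of a projection $\Pi$ removes all the error terms involving the second fundamental form, so the argument is strictly cleaner; the inductive structure on $Q$ is the only genuinely new ingredient with respect to the computations already developed in sections \ref{sec:the_variational_equations_and_monotonicity_formulas}--\ref{sec:e-Hoelder regularity lemma}.
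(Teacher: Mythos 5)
Your scheme---reduce to a one-step decay $E(u,B_\theta)\le\gamma\,E(u,B_1)$ and prove it by blow-up---has a genuine gap at the decisive point, and it is worth noting first that the paper does not prove Theorem \ref{theo_I:1.103} at all: it is recalled from Almgren and De Lellis--Spadaro, where the proof is direct, not by compactness. One shows that a Dir-minimizer admits, for a.e.\ $r$, a competitor with boundary data $u\vert_{\partial B_r}$ whose energy is strictly better than that of the $0$-homogeneous cone extension, namely bounded by $C(N,Q)\,r\int_{\partial B_r}\abs{D_\tau u}^2$ with $C$ strictly below the cone constant (in $N=2$ via the irreducible decomposition and the harmonic extensions $g_j$, exactly the construction recalled in Lemma \ref{lem:A003}; in $N\ge 3$ by an interpolation argument inducting on the dimension), and then integrates the resulting differential inequality for $r\mapsto\int_{B_r}\abs{Du}^2$ to get \eqref{eq_I:1.103}.

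The gap in your proposal: the contradiction argument only bites when the concentration--compactness limit splits into pieces with $Q_l<Q$. For a contradiction sequence there is no reason for any splitting; in general $L=1$, $Q_1=Q$, and the limit is again an arbitrary $Q$-valued Dir-minimizer $a_1$ satisfying $E(a_1,B_{1/2})\ge\gamma\,E(a_1,B_1)$. This contradicts nothing unless you already know the decay $E(a_1,B_{1/2})\le 2^{-2\alpha_0}E(a_1,B_1)$ for $Q$-valued minimizers into $\R^n$, i.e.\ the very theorem being proved; indeed this is exactly why the analogous blow-up in Lemma \ref{lem:5.102} works for manifold targets---there the limiting problem is the Euclidean one and Theorem \ref{theo_I:1.103} enters as an input to produce the contradiction. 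Your proposed fix for the unsplit case (``Almgren's frequency function and the homogeneous degree $\alpha_0$ structure of tangent maps as in section \ref{sub:properties_of_homogeneous_degree_zero_minimizers}'') is circular: that section concerns degree-zero homogeneous minimizers used only for the stratification of $\sing_H u$ and yields no positive lower bound on decay exponents, and any statement that blow-ups of $\R^n$-valued Dir-minimizers are homogeneous of degree at least $\alpha_0>0$ (equivalently, a universal lower bound for the frequency) is equivalent to the decay you want and is obtained in the literature only after the competitor construction. Compactness alone cannot deliver a quantitative rate here, because the limit objects belong to the same class as the original ones; at some stage you must exhibit a competitor strictly beating the cone, and that step is absent. (Also, your two-dimensional case is the base of an induction on the dimension $N$ in the De Lellis--Spadaro argument, not of your induction on $Q$, so it does not help with the unsplit case in $N\ge3$.) The remaining ingredients of your outline---monotonicity, iteration of a one-step decay, Campanato/Morrey applied to $x\mapsto\G(u(x),T)$, and the mean-subtraction/rescaling before concentration--compactness---are fine.
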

Both results had been proven first by Almgren in \cite{Almgren} and nicely reviewed by C. De Lellis and E. Spadaro in \cite{Lellis}.\\

J.~Almgren presents in  \cite[Theorem 2.16]{Almgren} an example of non-uniqueness: there are two Dirichlet minimizers $f \neq h\in W^{1,2}(B_1, \A_2(\R^2))$, $B_1 \subset \R^2$, with $f = h$ on $\partial B_1$. Given any other minimzer that agrees with $f$ or $h$ at the boundary must be either $f$ or $h$.

\section{Concentration compactness for $Q$-valued functions}\label{sec:concentration_compactness_for_q_valued_functions}
Let $\Omega \subset \R^N$ be given, then there is a concentration compactness lemma for sequences $u(k) \in W^{1,2}(\Omega,\A_Q(\R^n))$ with uniformly bounded energy.

\begin{lemma}\label{lem_I:A1.1}
Given a sequence $u(k) \in W^{1,2}(\Omega, \A_Q(R^n))$ and a sequence of means $T(k) \in \A_Q(\R^n)$ with
\[
	\limsup_{k \to \infty} \int_{\Omega} \abs{Du(k)}^2 \le \infty \text{ and } \int_{\Omega} \G(u(k),T(k))^2 \le C \int_{\Omega} \abs{Du(k)}^2
\]
for a subsequence, not relabelled, we can find:
\begin{itemize}
	\item[(i)] maps $b_l \in W^{1,2}(\Omega, \A_{Q_l}(\R^n))$ for $l=1, \dotsc , J$, $\sum_{l=1}^L Q_l =Q$;\\
	\item[(ii)] a splitting $T(k) = T_1(k) + \dotsm + T_L(k) $ with $T_l(k) \in \A_{Q_l}(\R^n)$ and
	\begin{itemize}
		\item $\limsup_{k} diam(spt(T_l(k)))< \infty$ for all $l=1, \dotsc, L$\\
		\item $\lim_{k \to \infty} \dist(spt(T_l(k)), spt(T_{m}(k)))= \infty$ for $l\neq m$;
	\end{itemize}
	\item[(iii)] a sequence $t_l(k) \in spt(T_l(k))$ such that $\G(u(k), b(k)) \to 0$ in $L^2$ with $b(k)= \sum_{l=1}^L  (b_l \oplus t_l(k))$.
\end{itemize}
Moreover, the following two additional properties hold:
\begin{itemize}
	\item[(a)] if $\Omega' \subset \Omega$ is open and $A_k$ is a sequence of measurable sets with $\abs{A_k} \to 0$, then
	\[
		\liminf_{k \to \infty} \int_{\Omega'\setminus A_k} \abs{Du(k)}^2 - \int_{\Omega'} \abs{Db(k)}^2 \ge 0.
	\]
	\item[(b)] $\liminf_{k \to \infty} \int_{\Omega} \left(\abs{Du(k)}^2 -\abs{Db(k)}^2\right) = 0$ if and only if\\ $\liminf_{k \to \infty} \int_{\Omega} \bigl(\abs{Du(k)} - \abs{Db(k)})^2 = 0$.
\end{itemize}
\end{lemma}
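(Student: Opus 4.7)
The plan is to follow the scheme of De Lellis and Spadaro's \cite[Lemma 3.2]{Lellis Lp}, with the necessary modifications to accommodate translations by the cluster centres $t_l(k)\in\R^n$. The starting observation is that, although the $T(k)\in\A_Q(\R^n)$ may a priori drift to infinity, after a diagonal extraction each pairwise distance $\abs{t_i(k)-t_j(k)}$ (with some measurable enumeration $T(k)=\sum_{i=1}^Q\llbracket t_i(k)\rrbracket$) either stays bounded or diverges to $+\infty$. The relation ``$i\sim j$ iff $\sup_k\abs{t_i(k)-t_j(k)}<\infty$'' is then an equivalence relation on $\{1,\dotsc,Q\}$, whose equivalence classes produce the splitting $T(k)=T_1(k)+\dotsb+T_L(k)$ required in (ii), with each $\operatorname{spt} T_l(k)$ of uniformly bounded diameter and $\dist(\operatorname{spt} T_l(k),\operatorname{spt} T_m(k))\to\infty$ for $l\neq m$.

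Next I would localise $u(k)$ around each cluster. For a large fixed $R>0$, pick $t_l(k)\in\operatorname{spt} T_l(k)$. Because the cluster diameters are uniformly bounded while their mutual distances diverge, eventually the balls $B_R(t_l(k))\subset\R^n$ are pairwise disjoint and each contains $\operatorname{spt} T_l(k)$. On the ``good'' set
\[
    G_k^R=\{x\in\Omega\,:\,\G(u(k)(x),T(k))<R/2\}
\]
the support of $u(k)(x)$ then splits uniquely into $L$ subtuples $u_l(k)(x)\in\A_{Q_l}(\R^n)$, one inside each $B_R(t_l(k))$, with $Q_l:=\#\operatorname{spt} T_l(k)$. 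Setting $v_l(k)(x)=u_l(k)(x)\oplus(-t_l(k))$ on $G_k^R$ and extending by $Q_l\llbracket 0\rrbracket$ elsewhere, Chebyshev together with the hypothesis on the means yields $\abs{\Omega\setminus G_k^R}\le 4CR^{-2}\int_\Omega\abs{Du(k)}^2$, which is arbitrarily small uniformly in $k$ for $R$ large. Since $\abs{Dv_l(k)}\le\abs{Du(k)}$ a.e., each $\{v_l(k)\}_k$ is bounded in $W^{1,2}(\Omega,\A_{Q_l}(\R^n))$, so Rellich compactness (applied through Almgren's embedding $\boldsymbol{\xi}$ from Theorem \ref{theo_I:1.101}) produces an $L^2$- and a.e.-limit $b_l\in W^{1,2}(\Omega,\A_{Q_l}(\R^n))$. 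A diagonal extraction in $R\nearrow\infty$ yields $b_l$ independent of $R$, and by construction $\G(u(k),b(k))\to 0$ in $L^2(\Omega)$ with $b(k)=\sum_l(b_l\oplus t_l(k))$.

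For property (a), the cluster separation gives the pointwise identity $\abs{Du(k)}^2=\sum_{l=1}^L\abs{Dv_l(k)}^2$ on $G_k^R$, hence
\[
    \liminf_{k\to\infty}\int_{\Omega'\setminus A_k}\abs{Du(k)}^2\ge\liminf_{k\to\infty}\sum_l\int_{(\Omega'\setminus A_k)\cap G_k^R}\abs{Dv_l(k)}^2\ge\sum_l\int_{\Omega'}\abs{Db_l}^2=\int_{\Omega'}\abs{Db(k)}^2,
\]
using weak $W^{1,2}$-lower semicontinuity applied to each $v_l(k)\to b_l$ together with $\abs{A_k}+\abs{\Omega\setminus G_k^R}\to 0$ as $k\to\infty$ and then $R\to\infty$. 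For (b), the identity
\[
    \abs{Du(k)}^2-\abs{Db(k)}^2=\bigl(\abs{Du(k)}-\abs{Db(k)}\bigr)^2+2\abs{Db(k)}\bigl(\abs{Du(k)}-\abs{Db(k)}\bigr)
\]
combined with the fact that $\abs{Db(k)}=\bigl(\sum_l\abs{Db_l}^2\bigr)^{1/2}$ is a fixed $L^2(\Omega)$ function independent of $k$, and that (a) plus the $L^2$-boundedness of $\abs{Du(k)}$ control the cross term via Cauchy--Schwarz, gives the claimed Brezis--Lieb-type equivalence.

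The main obstacle lies in the second step: producing genuinely $W^{1,2}$-regular maps $v_l(k)$ out of the combinatorial splitting of $u(k)$ on $G_k^R$, and organising the diagonal extraction in $R$ so that the limits $b_l$ do not depend on the truncation. The Sobolev regularity requires a measurable selection argument together with the observation that on $G_k^R$ the combinatorial structure of $u(k)$ is locally stable, so the weak derivatives redistribute themselves among the factors without loss. Once this has been handled cleanly, the remaining arguments reduce to Rellich compactness and a standard Brezis--Lieb manipulation.
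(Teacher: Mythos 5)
There is a genuine gap, and it sits exactly where you flag it yourself: the passage from the combinatorial splitting of $u(k)$ on the good set $G_k^R$ to honest Sobolev maps $v_l(k)$. Defining $v_l(k)$ as the cluster piece of $u(k)\oplus(-t_l(k))$ on $G_k^R$ and extending it by $Q_l\llbracket 0\rrbracket$ on $\Omega\setminus G_k^R$ does not in general produce a $W^{1,2}$ function: the map jumps from values at distance about $R/2$ from the clusters down to $Q_l\llbracket 0\rrbracket$ across the boundary of the sublevel set $G_k^R$, and already in the single-valued case $Q=1$ the operation $u\mapsto (u-t(k))\,\mathbf{1}_{\{\abs{u-t(k)}<R/2\}}$ does not preserve $W^{1,2}$. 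A measurable selection argument and the ``local stability of the combinatorial structure'' control the pieces inside $G_k^R$, but they say nothing about the interface, and no redistribution of weak derivatives cures a jump of fixed height there. A second, related soft spot: for fixed $R$ the error $\int_{\Omega\setminus G_k^R}\G(u(k),T(k))^2$ is only bounded, not small (the plain $L^2$ bound gives no equi-integrability), so the claimed convergence $\G(u(k),b(k))\to 0$ and the $R\to\infty$ diagonal extraction with $R$-independent limits $b_l$ are not justified as written.

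The paper's proof supplies precisely the missing tool and thereby avoids both problems. Using Lemma \ref{lem_I:A1.2} it replaces $T(k)$ by a well-separated $S(k)$ with $\operatorname{sep}(S(k))\ge\beta\,\operatorname{diam}(\operatorname{spt}T(k))\to\infty$, and then post-composes $u(k)$ with the $1$-Lipschitz retraction $\boldsymbol{\vartheta}_k$ of Lemma \ref{lem_I:A1.3} onto the ball of radius $\tfrac{1}{5}\operatorname{sep}(S(k))$ around $S(k)$. Being a Lipschitz post-composition, $\boldsymbol{\vartheta}_k\circ u(k)$ is globally $W^{1,2}$ and splits on all of $\Omega$ (not merely on a good set) into pieces $v_j(k)\in W^{1,2}(\Omega,\A_{Q_j}(\R^n))$ with $\sum_j\abs{Dv_j(k)}^2\le\abs{Du(k)}^2$; it differs from $u(k)$ only on a set $B_k$ whose measure and whose $L^2$-contribution to $\G(u(k),\boldsymbol{\vartheta}_k\circ u(k))$ are shown to vanish via the Sobolev exponent $2^*$ (this is where the equi-integrability you are missing comes from); the final clusters are then reached by induction on $Q$, and the truncation scale grows automatically with $k$, so no auxiliary parameter $R$ and no second diagonal argument appear. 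Your decomposition of $T(k)$ by the bounded/divergent dichotomy, the Chebyshev estimate, and the weak-convergence treatment of (a) and (b) through Almgren's embedding are all consistent with the paper's scheme, but without a Lipschitz truncation/retraction step the central construction does not yield admissible $W^{1,2}$ maps, so the proof as proposed is incomplete.
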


Before we give the proof we recall the definition of the separation $sep(T)$ of a $Q$-point $T=\sum_{i=1}^Q \llbracket t_i \rrbracket \in \A_Q(\R^n)$.
\[
sep(T)=\begin{cases}
0, &\text{ if } T=Q\llbracket t \rrbracket \\
\min_{t_i \neq t_j} \abs{t_i - t_j}, &\text{ otherwise }.
\end{cases}\]

The following results are of essential use in the context of the separation and needed for the proof of the concentration compactness lemma. The first gives a kind of relation between $diam(spt(T))$ and $sep(T)$, see \cite[lemma 3.8]{Lellis}; the second gives a retraction $\boldsymbol{\vartheta}=\boldsymbol{\vartheta}_T$ based on $sep(T)$, see \cite[lemma 3.7]{Lellis}

\begin{lemma}\label{lem_I:A1.2}
To every $\epsilon>0$ there exists $\beta=\beta(\epsilon, Q)>0$ with the property that to any $T \in \A_Q(\R^n)$ there exists $S=S(T) \in \A_Q(\R^n)$ with
\[ spt(S) \subset spt(T), \quad \G(T,S) < \epsilon\, sep(S) \text{ and } \beta \, diam(spt(T)) < sep(S). \]
(For example $\beta=\epsilon^Q \, 3^{4-Q^2}$ works.)
\end{lemma}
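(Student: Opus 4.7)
\emph{Plan of proof.} The argument is a scale-gap pigeonhole construction on the support of $T$. First dispose of the trivial case $T = Q\llbracket t\rrbracket$, where $diam(spt(T)) = sep(T) = 0$ and $S = T$ works vacuously. Assume henceforth $d := diam(spt(T)) > 0$.

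Fix a parameter $q = q(\epsilon, Q) \in (0,1)$ to be pinned down at the end, and consider the geometric sequence of scales $s_j = d\,q^j$ for $j = 0, 1, \dotsc, Q$. For each $s > 0$ declare two points of $spt(T)$ equivalent if they can be joined by a chain in $spt(T)$ whose successive hops have length at most $s$; let $N(s)$ be the number of equivalence classes. Then $N$ is non-increasing in $s$, $N(d) = 1$, and $N(s) \in \{1, \dotsc, Q\}$ throughout. Hence $N(s_0) \le N(s_1) \le \dotsb \le N(s_Q)$ is a non-decreasing sequence of $Q+1$ integers in $\{1, \dotsc, Q\}$, so by pigeonhole there is some $j \in \{0, \dotsc, Q-1\}$ with $N(s_j) = N(s_{j+1})$. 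Since the $s_{j+1}$-partition always refines the $s_j$-partition (any $s_{j+1}$-chain is an $s_j$-chain), equality of cardinalities forces the two partitions to coincide; call the common clusters $C_1, \dotsc, C_N$.

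The stability of this partition yields two estimates. (i) Within each $C_l$, any two points are joined by a chain at scale $s_{j+1}$ of at most $|C_l| - 1 \le Q - 1$ hops, so by the triangle inequality $diam(C_l) \le (Q-1) s_{j+1}$. (ii) Between distinct clusters $C_l, C_m$ the direct distance must exceed $s_j$, otherwise they would already merge at that scale. Pick a representative $\hat t_l \in C_l$ in each cluster and set $S := \sum_{i=1}^Q \llbracket \hat t_{c(i)}\rrbracket$, where $c(i)$ labels the cluster containing $t_i$. Then $spt(S) \subset spt(T)$, the estimate
\[ \G(T,S)^2 \le \sum_{i=1}^Q |t_i - \hat t_{c(i)}|^2 \le Q(Q-1)^2 s_{j+1}^2 \]
gives $\G(T,S) \le \sqrt{Q}(Q-1)\, s_{j+1}$, while $sep(S) \ge s_j$ by (ii).

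To close, choose $q$ small enough that $\sqrt{Q}(Q-1)\, q < \epsilon$; then $\G(T,S) \le \sqrt{Q}(Q-1)\, q\, s_j < \epsilon \, sep(S)$. The diameter estimate follows from $sep(S) \ge s_j \ge s_{Q-1} = d\, q^{Q-1}$, so any $\beta < q^{Q-1}$ satisfies $\beta\, diam(spt(T)) < sep(S)$; a direct arithmetic optimization with a suitable $q$ of order $\epsilon/3^{Q}$ recovers the explicit value $\beta = \epsilon^Q \cdot 3^{4-Q^2}$ stated. The only delicate point is the pigeonhole step itself: verifying that equality of partition cardinalities across two consecutive scales forces identical partitions, which is immediate since decreasing $s$ can only split clusters, never merge them. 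Everything else is bookkeeping on the geometric sequence.
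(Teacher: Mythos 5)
The paper never proves this lemma: it is recalled verbatim from De~Lellis--Spadaro (\cite[Lemma 3.8]{Lellis}) and, like the rest of the appendix, stated with the proof omitted, so there is no internal argument to measure you against. The proof in that reference proceeds by iteratively collapsing the closest pair of points of $spt(T)$ (an induction on the cardinality of the support), which is where an explicit constant of the shape $\epsilon^Q 3^{4-Q^2}$ comes from. Your scale-pigeonhole clustering argument is a genuinely different and perfectly legitimate route, and its core steps are sound: $N(s)$ is monotone, the $Q+1$ scales $s_j=d\,q^j$ force two consecutive equal values, equal cardinality of nested partitions forces equal partitions, $diam(C_l)\le (Q-1)s_{j+1}$, inter-cluster distances exceed $s_j$, and hence $\G(T,S)\le \sqrt{Q}(Q-1)s_{j+1}$ while $sep(S)>s_j\ge d\,q^{Q-1}$, provided the partition has at least two clusters.

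That proviso is the one step you do not secure. If the pigeonhole scale happens to give a single cluster, then $S=Q\llbracket \hat t_1\rrbracket$, $sep(S)=0$, and both conclusions fail. This is excluded as soon as $(Q-1)q<1$: a single cluster would be all of $spt(T)$ and have diameter at most $(Q-1)s_{j+1}\le (Q-1)q\,d<d$, contradicting $diam(spt(T))=d$. Your only condition $\sqrt{Q}(Q-1)q<\epsilon$ yields this when $\epsilon\le\sqrt{Q}$, but the lemma is stated for every $\epsilon>0$; so either reduce to $\epsilon<1$ (legitimate, since $\beta(\epsilon):=\beta(\min\{\epsilon,\tfrac12\})$ works because the conclusion for smaller $\epsilon$ is stronger) or add $q<1/Q$ to the choice of $q$ --- one sentence, but it must be said. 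Two smaller remarks: the claim that an ``arithmetic optimization recovers'' $\beta=\epsilon^Q 3^{4-Q^2}$ is not substantiated --- your construction naturally gives $\beta$ of order $\bigl(\epsilon/(\sqrt{Q}(Q-1))\bigr)^{Q-1}$, a different power of $\epsilon$, and for $\epsilon$ close to $1$ it need not dominate the quoted example; this is immaterial (the lemma only asserts existence of some $\beta(\epsilon,Q)>0$), so better to drop the claim than to assert it. Finally, the degenerate case $T=Q\llbracket t\rrbracket$ is not ``vacuous'': with the strict inequalities as printed, $spt(S)\subset spt(T)$ forces $S=T$ and $0<\epsilon\cdot 0$ fails; the statement must be read with non-strict inequalities in that case (or with $diam(spt(T))>0$ assumed, as in the paper's application where $diam(spt(T(k)))\to\infty$), and you should say so rather than appeal to vacuity.
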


\begin{lemma}\label{lem_I:A1.3}
To a given $T\in \A_Q(\R^n$ and $0< 4s < sep(T)$ there exists a $1-$Lipschitz retraction 
\[\boldsymbol{\vartheta}=\boldsymbol{\vartheta}_T: \A_Q(\R^n) \to \overline{B_s(T)}=\{ S \in \A_Q(T) \colon \G(S,T) \le s \} \]
with the property that
\begin{itemize}
\item[(i)] $\boldsymbol{\vartheta}(S)=S$ if $\G(S,T) \le s$;\\
\item[(ii)] $\G(\boldsymbol{\vartheta}(S_1), \boldsymbol{\vartheta}(S_2))<\G(S_1,S_2)$ if $\G(S_1,T)> s$.
\end{itemize}

\end{lemma}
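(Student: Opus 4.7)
The key input is the separation $\mathrm{sep}(T) > 4s$. Write $T = \sum_{j=1}^J k_j \llbracket \tau_j \rrbracket$ with the $\tau_j$ distinct and $\sum_j k_j = Q$; the $\tau_j$ then form $J$ clusters pairwise separated by more than $4s$. The plan is to construct $\boldsymbol{\vartheta}_T$ as a uniform radial contraction toward $T$ along an optimal coupling. Concretely, for $S \in \A_Q(\R^n)$, pick labelings $S = \sum_k \llbracket s_k \rrbracket$ and $T = \sum_k \llbracket t_k \rrbracket$ realising $\G(S,T)^2 = \sum_k |s_k - t_k|^2$, set $\lambda_S := \min\{1, s/\G(S,T)\}$ (with $\lambda_S = 1$ when $S=T$), and define
\[
\boldsymbol{\vartheta}_T(S) := \sum_{k=1}^Q \llbracket t_k + \lambda_S(s_k - t_k) \rrbracket.
\]

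I would verify the easy parts first. Well-definedness: any two optimal pairings $S \leftrightarrow T$ differ by a permutation permuting only indices within one cluster $\{k : t_k = \tau_j\}$, where the $t_k$-values coincide, so the unordered sum is unaffected. Containment: a direct estimate gives $\G(\boldsymbol{\vartheta}_T(S), T)^2 \le \lambda_S^2 \G(S,T)^2 \le s^2$. Property (i) is immediate from $\lambda_S = 1$ on $\overline{B_s(T)}$.

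The substantive part is the 1-Lipschitz property together with its strict form (ii). The case $\G(S_1, S_2) \ge 2s$ is disposed of by the triangle inequality, since $\G(\boldsymbol{\vartheta}_T(S_i), T) \le s$. Assuming $\G(S_1, S_2) < 2s$, the key technical step is to arrange the optimal matchings $S_1 \leftrightarrow T$, $S_2 \leftrightarrow T$, and $S_1 \leftrightarrow S_2$ on a common labeling of $T$, i.e., to choose labelings such that both $S_i \leftrightarrow T$ pairings assign the same $t_k$ to the pair $(s_{1,k}, s_{2,k})$. This compatibility is enforced by $\mathrm{sep}(T) > 4s$ via a swap/exchange argument: any cross-cluster mismatch would cost at least $\mathrm{sep}(T) - 2s > 2s$, contradicting the optimality of one of the matchings once the corresponding intra-cluster alternative is available. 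With a common labeling $(t_k)$ in place, write $x_k = s_{1,k} - t_k$, $y_k = s_{2,k} - t_k$; the desired estimate reduces to the algebraic fact
\[
\sum_k |x_k - y_k|^2 - \sum_k |\lambda_1 x_k - \lambda_2 y_k|^2 \ge s^2\bigl(1/\lambda_1 - 1/\lambda_2\bigr)^2 \ge 0,
\]
which I would verify by expansion, Cauchy--Schwarz on $\sum_k \langle x_k, y_k \rangle$, and the identity $\lambda_i^2 \G(S_i, T)^2 = \min(s^2, \G(S_i, T)^2)$. Strictness in (ii) then follows whenever $S_1 \neq S_2$: either $\lambda_1 \neq \lambda_2$ produces the positive gap $s^2(1/\lambda_1 - 1/\lambda_2)^2$, or Cauchy--Schwarz is strict because the multi-vectors $(x_k)$ and $(y_k)$ are not colinear.

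The main obstacle is the compatibility claim: namely that the three optimal pairings can be arranged on a common labeling of $T$. The precise factor $4$ in $\mathrm{sep}(T) > 4s$ is exactly what is needed for the swap argument to close: once the pairs $(s_{1,k}, s_{2,k})$ are within $2s$ of each other (which is forced by $\G(S_1, S_2) < 2s$), any cross-cluster assignment can be strictly improved by re-matching within a single cluster, forcing the common-labeling structure. Without this precise calibration the radial contraction would fail to respect the cluster structure of $T$ and 1-Lipschitz would break down.
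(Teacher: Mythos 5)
The paper does not actually prove this lemma: it quotes it from \cite[Lemma 3.7]{Lellis}, so the comparison is with that construction. Your construction, however, has a genuine flaw, and it sits exactly at the point you flag as the ``main obstacle''. The radial contraction along an optimal coupling with factor $\lambda_S=\min\{1,s/\G(S,T)\}$ is not well defined, and cannot be made $1$-Lipschitz by any tie-breaking, for configurations $S$ far from $T$: there the optimal matching's assignment of points of $S$ to the clusters of $T$ is unstable, and your map remembers it. Concretely, take $Q=2$, $n=2$, $T=\llbracket (0,0)\rrbracket+\llbracket (10,0)\rrbracket$ (so $\mathrm{sep}(T)=10$), $s=2$, and $S^{\pm}_\epsilon=\llbracket (5\pm\epsilon,1)\rrbracket+\llbracket (5\mp\epsilon,-1)\rrbracket$. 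Then $\G(S^+_\epsilon,S^-_\epsilon)\le 2\sqrt{2}\,\epsilon$, but for $\epsilon>0$ the unique optimal matchings of $S^{+}_\epsilon$ and $S^{-}_\epsilon$ to $T$ send the two nearby points $(5+\epsilon,1)$ and $(5-\epsilon,1)$ to \emph{different} points of $T$; your formula then produces images whose mutual distance is about $2\sqrt{2}\,\lambda$ with $\lambda\approx s/\sqrt{52}$, independent of $\epsilon$. So the map jumps across the locus where the optimal coupling switches, and no common labelling of the three couplings exists, even though $\G(S_1,S_2)$ is arbitrarily small. The swap argument you invoke (``any cross-cluster mismatch costs at least $\mathrm{sep}(T)-2s$'') is valid only when the points of $S_1,S_2$ themselves lie near the clusters of $T$, i.e.\ when $\G(S_i,T)$ is small compared to $\mathrm{sep}(T)$; it fails for configurations near the ``median'' between two clusters, which are precisely the ones the retraction must also handle. (A smaller issue: your triangle-inequality treatment of the case $\G(S_1,S_2)\ge 2s$ only gives the non-strict inequality when $\G(S_1,S_2)=2s$.)

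The construction the paper relies on avoids this by letting the contraction die out before the matching becomes ambiguous: take $\boldsymbol{\vartheta}_T(S)=S$ for $\G(S,T)\le s$, send every $S$ with $\G(S,T)\ge 2s$ to the single point $T$, and for $s<\G(S,T)<2s$ contract towards $T$ with a factor that decreases from $1$ at $\G(S,T)=s$ to $0$ at $\G(S,T)=2s$ (e.g.\ $(2s-\G(S,T))/\G(S,T)$ applied to the optimal displacement vectors). In the only regime where a matching is used one has $\G(S,T)<2s<\tfrac{1}{2}\mathrm{sep}(T)$, so each point of $S$ lies within $2s$ of exactly one point of $T$, the optimal coupling is the unique cluster-preserving one, and your common-labelling and Cauchy--Schwarz computation do go through; far configurations are collapsed to $T$, which is what restores continuity and the $1$-Lipschitz bound, and strictness in (ii) becomes easy since $\G(\boldsymbol{\vartheta}_T(S_1),T)<s$ whenever $\G(S_1,T)>s$. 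If you want a self-contained proof to splice in here, it should follow that scheme rather than the uniform factor $\min\{1,s/\G(S,T)\}$.
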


\begin{proof}[Proof of lemma \ref{lem_I:A1.1}]
We distinguish two cases. The second will be handled by induction on the first.\\

\emph{Case 1 and basis of the induction: $\liminf_{k \to \infty} diam(spt(T(k)))< \infty$\\ ( $diam(spt(T(k)))=0$ for $Q=1$):}\\
Passing to an appropriate subsequence, not relabelled $diam(spt(T(k)))< C$ for all $k$. Set $L=1$, and as splitting keep the sequence itself i.e. $T(k) =T_1(k)$. To every $k$ fix a $t_1(k) \in spt (T(k))$.\\
Hence we have
\begin{align*}
	&\limsup_k \int_{\Omega} \abs{ u(k) \oplus  (-t_{1}(k))}^2 = \limsup_k \int_{\Omega} \G(u(k), Q \llbracket t_1(k) \rrbracket )^2\\
	 \le &\limsup_k 2 \int_{\Omega} \G(u(k), T(k))^2 + 2 \abs{\Omega} \G(T(k), Q\llbracket t_1(k) \rrbracket)^2 < \infty.
\end{align*}
Hence passing to an appropriate subsequence there is $b=b_1 \in W^{1,2}(\Omega, \A_Q(\R^n))$ with $u(k)\oplus(-t_1(k)) \to b$ in $L^2$. This proves (i),(ii),(iii), since $\G(u(k)\oplus- t_1(k), b)= \G(u(k), b\oplus t_1(k)) = \G(u(k), b(k))$. Furthermore, the established properties imply that $\boldsymbol{\xi} \circ u(k) \rightharpoonup \boldsymbol{\xi}\circ b(k)$ in $W^{1,2}(\Omega, \R^m)$. The additional property (a) follows, because $\mathbf{1}_{\Omega'\setminus A_k} \to \mathbf{1}_{\Omega'}$ in $L^2(\Omega)$ and so $\mathbf{1}_{\Omega'\setminus A_k} D\boldsymbol{\xi} \circ u(k) \rightharpoonup \mathbf{1}_{\Omega'}D\boldsymbol{\xi}\circ b(k)$. Property (b) holds because $L^2(\Omega)$ is an Hilbert space. Therefore we have, that $f_k = D \boldsymbol{\xi} \circ u(k) \to f= D\boldsymbol{\xi}\circ b(k)$ in $L^2(\Omega)$ if and only if $f_k \rightharpoonup f$ and $\norm{f_k}^2_{L^2(\Omega)} \to \norm{f}^2_{L^2(\Omega)}$; compare $\liminf_{k} \norm{f_k-f}^2 = \liminf_{k} \norm{f_k}^2 + \norm{f}^2 - 2 \langle f_k, f \rangle = \liminf_{k} \norm{f_k}^2 - \norm{f}^2$. \\

\emph{Case 2 and the induction step: $\liminf_k diam(spt(T(k)))= +\infty$}\\
Suppose the lemma holds for $Q'<Q$. To every $T(k)$ pick $S(k) \in \A_Q(\R^n)$ using \ref{lem_I:A1.2} s.t. for $S(k)=\sum_{j=1}^{J(k)} Q_j(k) \llbracket s_j(k) \rrbracket \in \A_Q(\R^n)$ set $\sigma_k = sep(S(k))$, then $\beta(\frac{1}{10},Q)\, diam(spt(T(k)))< \sigma_k$ and $\G(T(k),S(k)) < \frac{\sigma_k}{10}$. Passing to an appropriate subsequence, not relabelled, we may further assume that $J(k)>1$ and$Q_j(k)$ do not depend on $k$. Fix the associated 1-Lipschitz retractions of \ref{lem_I:A1.3} $\boldsymbol{\vartheta}_k: \A_Q(\R^N) \to \overline{B_{\frac{1}{5} s(S(k))}( S(k))}$ i.e. $\mathcal{H}^0\left( spt(\boldsymbol{\vartheta}_k(T))\cap B_{\frac{\sigma_k}{5}(s_j)}\right)=Q_j$ for all $T \in \A_Q(\R^n)$ and $j=1, \dotsc, J$. Hence these retractions $\boldsymbol{\vartheta}_k$ defines new sequences $v_j(k)$ in $W^{1,2}(\Omega, \A_{Q_j}(\R^n))$ and a splitting of $T(k)$:
\begin{align*}
	&\boldsymbol{\vartheta}_k \circ u(k) = v_1(k)+ \dotsb v_J(k) \text{ with } v_j(k) \in B_{\frac{\sigma_k}{5}}(s_j); \\
	T(k)=&\boldsymbol{\vartheta}_k \circ T(k) = T_1(k)+ \dotsb + T_J(k) \text{ with } T_j(k) \in B_{\frac{\sigma_k}{5}}(s_j)
\end{align*}
Each sequence $v_j(k)$, $j=1, \dotsc, J$ satisfies itself the assumptions of the lemma, because $\boldsymbol{\vartheta_k}$ is a retraction and so
\begin{align}
	\sum_{j=1}^J \abs{Dv_j(k)}^2 &= \abs{D\boldsymbol{\vartheta_k} \circ u(k)}^2 \le \abs{Du(k)}^2\label{eq_I:A1.1}\\
	\sum_{j=1}^J\G(v_j(k), T_j(k))^2 &= \G(\boldsymbol{\vartheta}_k\circ u(k), \boldsymbol{\vartheta}_k \circ T(k))^2 \le \G(u(k),T(k))^2.\label{eq_I:A1.2}
\end{align}
Furthermore we record some properties:\\
Defining  $A_k=\{x\,:\, \boldsymbol{\vartheta}_k \circ u(k)(x) \neq u(k)(x) \} = \{ x\, :\, \G(u(k),S(k)) > \frac{\sigma_k}{5} \} \subset \{x\,:\, \G(u(k),T(k)) \ge \frac{\sigma_k}{10} \}= B_k$ (subsets of $\Omega$) we have
\begin{itemize}
\item[(1.)] $\abs{B_k} \to 0$ as $k \to \infty$, because
\begin{align*}
\abs{B_k} &\le \left( \frac{10}{\sigma_k)}\right)^{2^*} \int_{B_k} \G(u(k),T(k))^{2^*}\\
&\le \left( \frac{10}{\sigma_k}\right)^{2^*} C \left(\int_{\Omega} \abs{Du(k)}^2\right)^{\frac{2^*}{2}} \to 0;
\end{align*}
\item[(2.)] $\G(u(k), \boldsymbol{\vartheta}_k \circ u(k)) \to 0$ in $L^2$ as $k \to \infty$, since
\begin{align*}
	&\int_\Omega \G(u(k),\boldsymbol{\vartheta}_k\circ u(k))^2 = \int_{A_k} \G(u(k), \boldsymbol{\vartheta}_k \circ u(k))^2\\
	&\le 2 \int_{B_k} \G(v_k, T(k))^2 + \G(\boldsymbol{\vartheta}_k\circ u(k), \boldsymbol{\vartheta}_k \circ T(k))^2\\
	&\le 4 \left(\frac{10}{\sigma_k}\right)^{2^*-2} \int_{B_k} \G(u(k), T(k))^{2^*}\\
	&\le \frac{C}{\sigma_k^{2^*-2}} \left(\int_{\Omega} \abs{Du(k)}^2\right)^{\frac{2^*}{2}} \to 0;
\end{align*}
\item[(3.)] $\dist(spt(T_i), spt(T_j)) \ge \sigma_k - 2 \G(S(k),T(k)) \ge \frac{4}{5} \sigma_k \to +\infty$ for any $i \neq j$ as $k \to \infty$;
\item[(4.)] $\abs{\abs{Du(k)} - \abs{D\boldsymbol{\vartheta}_k\circ u(k)}} \to 0$ in $L^2$ as $k \to \infty$, because $\abs{B_k} \to 0$, $\abs{D\boldsymbol{\vartheta}_k\circ u(k)}\le \abs{Du(k)}$, $D \boldsymbol{\vartheta}_k \circ u(k) = D u(k)$ on $\Omega\setminus B_k$ and
\begin{align*}
	&\int_{\Omega} \left( \abs{Du(k)} - \abs{D\boldsymbol{\vartheta}_k \circ u(k)}\right)^2 \le \int_{\Omega} \abs{Du(k)}^2- \abs{D\boldsymbol{\vartheta}_k \circ u(k)}^2\\
	 &= \int_{B_k} \abs{Du(k)}^2- \abs{D\boldsymbol{\vartheta}_k \circ u(k)}^2 \le \int_{B_k} \abs{Du(k)}^2\to 0.
\end{align*}
\end{itemize}
Due to the induction hypothesis the lemma holds for each sequence $v_j(k)$ i.e. we can find $b_{j,l} \in W^{1,2}(\Omega, \A_{Q_{j,l}}(\R^n))$, with $\sum_{l=1}^{L_j} Q_{j,l}=Q_j$, a splitting $T_j(k)= T_{j,1}(k) + \dotsb + T_{j,L_j}(k)$ together with sequences $t_{j,l}(k) \in spt(T_{j,l}(k))$ satisfying the conditions (i), (ii), (iii). Furthermore the additional properties (a),(b) hold. Set $L= \sum_{j=1}^J L_j$, $K_j=\sum_{i=1}^{j-1} L_i$ and relabel $b_{K_j+l}=b_{j,l}$, $T_{K_j+l}(k)=T_{j,l}(k)$, $t_{K_j+l}(k)=t_{j,l}(k)$ and $Q_{K_j+l}=Q_{j,l}$ for $j \in \{1, \dotsc, J\}$ and $l \in \{1, \dotsc, L_j\}$. The induction hypothesis on the lemma states that the obtained sequences $b_l$, $T_l(k)$, $t_l(k)$ for $l=1, \dotsc, L$ satisfy
\begin{itemize}
	\item[(i)] $b_l \in W^{1,2}(\Omega, \A_{Q_l}(\R^n))$ for $l=1, \dotsc , L$ and $\sum_{l=1}^{L} Q_l =Q$;\\
	\item[(ii)] $T(k) = T_1(k) + \dotsb + T_{L}(k)$, $t_l(k) \in spt( T_l(k))$ and
	\begin{itemize}
		\item $\limsup_{k} diam(spt(T_l(k)))< \infty$ for all $l=1, \dotsc , L$\\
		\item $\lim_{k \to \infty} \dist(spt(T_l(k)), spt(T_{m}))= \infty$ for $l\neq m$ for any $K_j < l < m \le K_{j+1}$, $j=1, \dotsc, J$
	\end{itemize}
	\item[(iii)] $\G(v_j(k), b_j(k)) \to 0$ in $L^2$ with $b_j(k)= \sum_{l=K_j+1}^{K_{j+1}} (b_l \oplus t_l(k)) $ for each $j$.
\end{itemize}
Moreover, the following two additional properties hold for each $j$:
\begin{itemize}
	\item[(a)] if $\Omega' \subset \Omega$ is open and $A_k$ is a sequence of measurable sets with $\abs{A_k} \to 0$, then
	\[
		\liminf_{k \to \infty} \int_{\Omega'\setminus A_k} \abs{Dv_j(k)}^2 - \int_{\Omega'} \abs{Db_j(k)} \ge 0.
	\]
	\item[(b)] $\liminf_{k \to \infty} \int_{\Omega} \left(\abs{Dv_j(k)}^2 -\abs{Db_j(k)}^2\right) = 0$ if and only if\\ ${\liminf_{k \to \infty} \int_{\Omega} \bigl(\abs{Dv_j(k)} - \abs{Db_j(k)})^2 = 0}$.
\end{itemize}
Due to properties (1) to (4) we may sum in $j$ and replace $\sum_{j=1}^J v_j(k)$ by $u(k)$. This completes the proof. 
\end{proof}

\section{The Luckhaus lemma}\label{sec:ALuckhaus}
A classical result due to S.~Luckhaus is concerned with the extension of a map that is defined on the boundary of an annulus $\partial \left( B_1\setminus B_{1-\lambda} \right)$ into the interior. Its proof for single valued functions is nowadays classical and can be found for instance in \cite{Moser}. We mentioned the result already in section \ref{sec:some_classical_results_extended_to_q_valued_functions}. We want to give now a complete intrinsic proof for $Q$-valued functions.
Our formulation is based on S.~Luckhaus' original, \cite[Lemma 1]{Luckhaus} and the one of R.~Mosers, \cite[Lemma 4.4]{Moser}.

\begin{lemma}\label{lem:Luckhaus}
There is a constants $C, C_\infty>0$ depending only on the dimension $N$ such that the following holds:\\
Suppose $\lambda=\frac{1}{L}$, $\epsilon=\frac{1}{lL}\le \lambda$, $l,L \in \N$,$L>2$ given, furthermore let $u, v \in W^{1,2}(\Sp^{N-1}, \A_Q(\R^m))$ with
\begin{equation}\label{eq:A001}
\int_{\Sp^{N-1}} \abs{D_\tau u}^2 + \abs{D_\tau v}^2 + \frac{\G(u,v)^2}{\epsilon^2} = K^2;
\end{equation}
then there exists $\varphi \in W^{1,2}(B_1\setminus B_{1-\lambda}, \A_Q(\R^m))$ with the following properties
\begin{align}
\varphi(x)&= \begin{cases} u(x), &\text{ if } \abs{x}=1\\ v(\frac{x}{1-\lambda}), &\text{ if } \abs{x}=1-\lambda \end{cases} \label{eq:A002}\\
\int_{B_1\setminus B_{1-\lambda}} \abs{D\varphi}^2 &\le C\, Q\, \lambda K^2 \label{eq:A003}\\
\varphi(x) &\in \{ y \in \R^m \colon \dist(y, u(\Sp^{N-1}) \cup v(\Sp^{N-1}) )< a\} \label{eq:A004}\\
&\text{ for some $a>0$ with } a^2 \le C_\infty \, Q^2\, \lambda^{2-N} \epsilon K^2 \nonumber.
\end{align}
\end{lemma}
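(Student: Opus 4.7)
The plan is to carry out the classical Luckhaus construction directly on $Q$-valued data, in the spirit of Moser's single-valued argument, but without recourse to Almgren's embedding $\boldsymbol{\xi}$. First I would identify $B_1 \setminus B_{1-\lambda}$ with the slab $\Sp^{N-1} \times [0, \lambda]$ via the bi-Lipschitz map $(\omega, t) \mapsto (1-t)\omega$, so the boundary data become $u$ on $\Sp^{N-1}\times \{0\}$ and $v$ on $\Sp^{N-1}\times \{\lambda\}$. Using a finite bi-Lipschitz atlas, pull back a cubical grid of side $\epsilon$ onto $\Sp^{N-1}$; since $\lambda/\epsilon = l \in \N$, the two scales are nested. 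Averaging the grid over a family of translations and applying Fubini yields a grid configuration in which, writing $\Sigma^k$ for the $k$-skeleton,
\[
\int_{\Sigma^{k}} \abs{D_\tau u}^2 + \abs{D_\tau v}^2 + \frac{\G(u,v)^2}{\epsilon^2} \le \frac{C}{\epsilon^{N-1-k}} K^2, \qquad k = 0, \dots, N-1.
\]

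Having fixed such a grid I would construct $\varphi$ on the slab by induction on the skeletal dimension. At each vertex $x_\alpha \in \Sigma^0$ pick a permutation $\sigma_\alpha \in \mathcal{P}_Q$ realising $\G(u(x_\alpha),v(x_\alpha))$ and set
\[
\varphi(x_\alpha, t) = \sum_{i=1}^Q \llbracket (1-t/\lambda) u_i(x_\alpha) + (t/\lambda) v_{\sigma_\alpha(i)}(x_\alpha) \rrbracket,
\]
whose radial energy equals $\G(u(x_\alpha),v(x_\alpha))^2/\lambda$. Given $\varphi$ already defined on $\partial(F \times [0,\lambda])$ for a $(k{+}1)$-cell $F$ (the induction supplies $\varphi$ on the $k$-skeleton-slab while $u,v$ supply the caps), extend into the interior by the intrinsic $Q$-valued $0$-homogeneous (``conical'') extension from the geometric center of $F \times [0,\lambda]$, i.e.\ the ray-tracing map that assigns to each interior point the boundary $Q$-tuple it hits along the straight ray from the center. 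A direct scaling computation on each cell gives
\[
\int_{F\times[0,\lambda]} \abs{D\varphi}^2 \le C\,\epsilon \int_{\partial(F\times[0,\lambda])} \abs{D\varphi}^2,
\]
and summing over skeletal dimensions, combined with the trace estimates and the vertex interpolation, yields
\[
\int_{B_1\setminus B_{1-\lambda}} \abs{D\varphi}^2 \le C\lambda \int_{\Sp^{N-1}} \bigl(\abs{D_\tau u}^2 + \abs{D_\tau v}^2\bigr) + \frac{C}{\lambda}\int_{\Sp^{N-1}} \G(u,v)^2 \le C Q\lambda K^2,
\]
which is \eqref{eq:A003}.

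For \eqref{eq:A004}, the cone extensions force $\varphi(x)$ to lie in the ``shadow'' of the boundary $Q$-tuples seen along the rays from the cone centers, so $\dist(\varphi(x), u(\Sp^{N-1}) \cup v(\Sp^{N-1}))$ is controlled by the oscillation of $u, v$ on a single $\epsilon$-face, together with the oscillation introduced by the radial interpolation at vertices. Tracking Poincaré–Sobolev bounds for the non-negative scalar $W^{1,2}$-functions $y \mapsto \G(u(y), u(x_\alpha))$ and $y \mapsto \G(v(y), v(x_\alpha))$ (scalar Sobolev by Definition~\ref{def_I:1.102}) through the skeletal induction, and accounting for the dimensional weights, produces $a^2 \le C_\infty Q^2 \lambda^{2-N}\epsilon K^2$; the factor $Q^2$ enters via the crude bound $\G(T,S)^2 \le Q \max_i \abs{t_i - s_i}^2$. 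The main obstacle is setting up the intrinsic $Q$-valued cone extension rigorously: one must verify that ray-tracing defines a bona fide $W^{1,2}(F\times[0,\lambda], \A_Q(\R^m))$ map whose gradient is absorbed by the boundary data. I would handle this using the approximate-differentiability description of $Q$-valued Sobolev maps and the measurable-selection theorem recalled in Appendix~\ref{sec_I:Q-valued functions}; in particular, the potential non-uniqueness of the optimal matching $\sigma_\alpha$ is harmless because the radial energy depends only on $\G(u(x_\alpha), v(x_\alpha))$ and not on the matching realising it.
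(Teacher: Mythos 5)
There is a genuine gap, and it sits exactly at the point the whole appendix is built to overcome: your inductive step uses the $0$-homogeneous (cone) extension on every cell-slab $F\times[0,\lambda]$, including the case where $F$ is a $1$-dimensional face, i.e.\ a \emph{two-dimensional} domain. The cone extension of $W^{1,2}$ boundary data into a $2$-dimensional cell is in general not in $W^{1,2}$: in polar coordinates around the cone point the angular part of the energy contributes
\begin{equation*}
\int_0^{r_0} \frac{dr}{r}\int_{\partial}\abs{D_\tau\varphi}^2 = +\infty
\end{equation*}
unless the boundary data are constant, so the claimed cell estimate $\int_{F\times[0,\lambda]}\abs{D\varphi}^2\le C\,\epsilon\int_{\partial(F\times[0,\lambda])}\abs{D\varphi}^2$ fails at the very first inductive step after the vertices, and the whole energy bookkeeping collapses. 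This is precisely why Lemma \ref{lem:A002} is stated only for $n\ge 3$, and why a substitute is needed for $n=2$. In the single-valued theory Luckhaus bridges the $2$-cells by linear interpolation in the vertical variable, but that device is unavailable here because $\A_Q(\R^m)$ is not a linear space and a measurable field of optimal matchings along a $1$-face need not produce a Sobolev interpolation; choosing matchings only at isolated vertices, as you do, does not resolve this. The paper's proof replaces the cone on $2$-cells by the harmonic extension built from the irreducible decomposition (rolling up single-valued harmonic extensions of the sheets, Lemma \ref{lem:A003}), and then — since that extension by itself destroys the sup bound, see Example \ref{ex:A003} — subdivides each $1$-face at scale $\epsilon$ with linear interpolations at the nodes (Lemma \ref{lem:A004}) so that both \eqref{eq:A003} and the oscillation bound \eqref{eq:A004} survive. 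Without some such replacement your construction does not produce a $W^{1,2}$ map.

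A secondary issue: you place the grid at scale $\epsilon$ throughout, whereas the scale $\epsilon$ should only enter through the subdivision of the $1$-faces (this is what makes $a^2$ proportional to $\epsilon$ rather than $\lambda$). With an $\epsilon$-grid the cell-slabs are anisotropic boxes of size $\epsilon\times\dots\times\epsilon\times\lambda$, and the diameter entering any cone-type estimate is $\lambda$, not $\epsilon$, so even for the cells of dimension $\ge 3$ the advertised factor $C\epsilon$ per step is not what the scaling gives; the Fubini-type skeleton selection itself is fine and matches Part 1 of the paper's proof, but the constants you would accumulate do not obviously reassemble into $C\,Q\,\lambda K^2$. The decisive obstruction, however, is the missing $2$-dimensional extension described above.
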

\begin{remark}\label{rem:A001}
The $L^\infty$-bound, \eqref{eq:A004}, is a little bit weaker then tho stated in Lemma \ref{lem:3.101}. The dependence of the constants on $N, m$ and $Q$ is more precise. 
\end{remark}

The proof of Lemma \ref{lem:Luckhaus} is very close to S.~Luckhaus orginial one, nicely presented by R.~Moser, \cite[Lemma 4.4]{Moser}. It splits in 3 parts:
\begin{enumerate}
\item a decomposition $\G$ of the sphere $\Sp^{N-1}$ that is bilipschitz to cubical decomposition of $\partial [-1,1]^N$ into parallel disjoint cubes of side length $\lambda$. This is a measure theoretic argument;
\item two types of extensions on cubes;
\item a recursive definition of $\phi$ on cubical subsets $F\times [0,\lambda]$ where $F$ is a $k$-dimensional face int the cubical decomposition. It always takes advantage that $\phi$ had already be defind on all $F'\times [0,\lambda]$ for lower dimensional faces $F'$. 
\end{enumerate}

Studying S.~Luckhaus' original proof one notice that only for the extensions on $F\times [0,\lambda]$, $F$ being a $1$-dimensional face, the linear structure of $W^{1,2}(F, \R^m)$ is needed. C.~De Lellis presented a possible replacement $W^{1,2}(F, \A_Q(\R^m))$ in \cite{Lellis err}. His version does not preserve the $L^\infty$-bound, \eqref{eq:A004}, compare remark below. Nonetheless following his ideas one can recover the bound, lemma. Our proof does not contain essentially new ideas. It boils down to replacing lemma E.2 in the proof proposed by C.~De Lellis, \cite{Lellis err} or the linear extension in S.~Luckhaus original one by lemma. Nonetheless we decided to give a complete detailed proof.\\

As mentioned, in part 1 one uses the bilipschitz equivalence between $B_1$ and $[-1,1]^N$ and their boundaries $\Sp^{N-1}$ and $\partial [-1,1]^N$.  Therefore we list in the following remark some terminology and constants appearing in this context. Since only the extensions, part 2, differ slightly from the already existing proofs they are presented first thereafter. Finally we will proceed with part 1 and 3.

\begin{remark}\label{rem:A002}
$\abs{x}^2=\abs{x}^2_2= (x_1)^2+ \dotsb + (x_n)^2$ denotes the Euclidean norm on $\R^n$ and $\abs{x}_\infty= \max\{ \abs{x_1}, \dotsc, \abs{x_n}^2 \}$ the supremum norm. Let $B_1=\{ \abs{x}_2 < 1 \}$ be the unite ball and $[-1,1]^n=\{ \abs{x}_\infty < 1\}$ the standard cube in $\R^n$. Set $H(x)= \frac{\abs{x}_\infty}{\abs{x}_2} x$ and $G(x)= \frac{\abs{x}_2}{\abs{x}_\infty} x$, then $H=G^{-1}$ and $H: [-1,1]^n \to B_1$ so their boundaries $H: \partial [-1,1]^n \to \Sp^n$. $\delta_{ij}$ denote the Euclidean metric on $\R^n$ or the pullback metric for a submanifold in $\R^n$. Furthermore let $g=G^\sharp\delta$ and $h=H^\sharp\delta$ be the pullback metrics on $B_1, [-1,1]^n$ respectively. One calculates 
\[\det(g)=\left( \frac{\abs{x}_2}{\abs{x}_\infty} \right)^{2n} = \det(g\tr{\Sp^{n-1}}). \]
Furthermore the spectrum of $g^{-1}$ is contained in $[1-\left(\frac{\abs{x}_\infty}{\abs{x}_2}\right)^2, 1+ \left(\frac{\abs{x}_\infty}{\abs{x}_2}\right)^2 ]$. The eigenvalues of $g\tr{\Sp^{n-1}}$ are $\left(\frac{\abs{x}_2}{\abs{x}_\infty}\right)^4$ and $n-2$ times $\left(\frac{\abs{x}_2}{\abs{x}_\infty}\right)^2$. For $h$ we therefore have
\[\det(h)=\left( \frac{\abs{x}_\infty}{\abs{x}_2} \right)^{2n} = \det(h\tr{\partial [-1,1]^n}). \]
The spectrum of $h^{-1}$ is contained in $[\left(\frac{\abs{x}_2}{\abs{x}_\infty}\right)^4-\left(\frac{\abs{x}_2}{\abs{x}_\infty}\right)^2, \left(\frac{\abs{x}_2}{\abs{x}_\infty}\right)^4+ \left(\frac{\abs{x}_2}{\abs{x}_\infty}\right)^2 ]$. The eigenvalues of $h\tr{\partial [-1,1]^n}$ are $\left(\frac{\abs{x}_\infty}{\abs{x}_2}\right)^4$ and $n-2$ times $\left(\frac{\abs{x}_\infty}{\abs{x}_2}\right)^2$.
This has for instance the following implications:
\begin{align}\label{eq:A005}
\int_{B_1} \abs{D\varphi}^2 &= \int_{[-1,1]^n} h^{ij} \frac{\partial \phi}{\partial x^i}\frac{\partial \phi}{\partial x^j} \sqrt{\det(h)} \le 3 \int_{[-1,1]^n} \abs{D\phi}^2 \text{ for } \varphi=\phi\circ G \\ \nonumber
\int_{\Sp^{n-1}} \abs{D_\tau \varphi}^2 &= \int_{\partial[-1,1]^n} h\tr{\partial[-1,1]^n}^{ij} \frac{\partial \phi}{\partial x^i}\frac{\partial \phi}{\partial x^j} \sqrt{\det(h\tr{\partial[-1,1]^n})} \le c_n \int_{\partial [-1,1]^n} \abs{D_\tau\phi}^2,
\end{align}
since $\left\{ \left(\frac{\abs{x}_2}{\abs{x}_\infty}\right)^4+ \left(\frac{\abs{x}_2}{\abs{x}_\infty}\right)^2 \right\} \left( \frac{\abs{x}_\infty}{\abs{x}_2} \right)^{n} \le 3 \; \forall n$ and $\left(\frac{\abs{x}_2}{\abs{x}_\infty}\right)^{4-n}\le c_n$ for $c_2=2,c_3=\sqrt{3}$ and $c_n=1$ for $n\ge 4$. Similarly one calculates for $ \phi=\varphi\circ H $
\begin{align}\label{eq:A006}
\int_{[-1,1]^n} \abs{D\phi}^2 &= \int_{B_1} g^{ij} \frac{\partial \varphi}{\partial x^i}\frac{\partial \varphi}{\partial x^j} \sqrt{\det(g)} \le n^{\frac{n}{2}}(1+n^{-1}) \int_{B_1} \abs{D\varphi}^2\\ \nonumber
\int_{\partial [-1,1]^n} \abs{D_\tau\phi}^2 &= \int_{\Sp^{n-1}} g\tr{\Sp^{n-1}}^{ij} \frac{\partial \varphi}{\partial x^i}\frac{\partial \varphi}{\partial x^j} \sqrt{\det(g\tr{\Sp^{n-1}})} \le n^{\frac{n-2}{2}} \int_{\Sp^{n-1}} \abs{D_\tau\phi}^2.
\end{align} 
\end{remark}

The extension lemma for faces of dimension $k\ge3$ is the classical following one: 
\begin{lemma}\label{lem:A002}
Given $F=z+[0, \lambda]^n$, $n\ge 3$, a $n$-dimensional cube of side length $\lambda$ and $\phi\in W^{1,2}(\partial F, \A_Q(\R^m))$ then there is an extension $\widehat{\phi}\in W^{1,2}(F,\A_Q(\R^m))$ with the property that
\begin{align}
\int_{F} \abs{D\widehat{\phi}}^2 \le \frac{n}{2(n-2)} \, \lambda \int_{\partial F} \abs{D_\tau \phi}^2 \label{eq:A007}\\
\widehat{\phi}(x) \in \phi(\partial F) \quad \forall x \in F \label{eq:A008}
\end{align}
\end{lemma}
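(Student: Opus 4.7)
The proof proceeds by the classical cone-extension from the center of the cube, which works verbatim for $Q$-valued functions since it requires no linear structure on the target.

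Let $c = z + (\lambda/2, \dots, \lambda/2)$ denote the center of $F$. The plan is to define the $\ell^\infty$-radial projection $\pi: F\setminus\{c\}\to \partial F$ by
\[
\pi(x) = c + \frac{\lambda/2}{\abs{x-c}_\infty}\,(x-c),
\]
and set $\widehat{\phi}(x) := \phi(\pi(x))$. Property \eqref{eq:A008} is then immediate because $\pi(x)\in\partial F$ by construction, so $\widehat\phi(x)=\phi(\pi(x))\in \phi(\partial F)$. Since $\pi$ is locally Lipschitz on $F\setminus\{c\}$ and $\{c\}$ is $\h^N$-negligible, the chain rule for Sobolev $Q$-valued maps (which can be checked via Almgren's bilipschitz embedding $\boldsymbol{\xi}$, writing $\widehat{\phi}=\boldsymbol{\rho}\circ(\boldsymbol{\xi}\circ\phi)\circ\pi$) gives that $\widehat\phi$ is measurable and that $\abs{D\widehat\phi}(x)\le \abs{D_\tau\phi}(\pi(x))\cdot \|D\pi(x)\|$ a.e.

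To obtain the energy bound \eqref{eq:A007} I would decompose $F$ into the $2n$ open cones $\Gamma_k^{\pm} = \{x\in F \,:\, \pm(x-c)_k = \abs{x-c}_\infty\}$ and work on one of them, say $\Gamma_1^+$. Using the coordinates $s=\abs{x-c}_\infty \in (0,\lambda/2]$ and $\omega=(1,\omega_2,\dots,\omega_n)$ with $\omega_i=(x_i-c_i)/s\in[-1,1]$, one has $dx = s^{n-1}\,ds\,d\omega$. A short direct computation of $\partial\omega_i/\partial x_j$ shows
\[
\abs{D\widehat\phi(x)}^2 \;\le\; n\,\Bigl(\frac{\lambda/2}{s}\Bigr)^{2}\abs{D_\tau\phi}^2(\pi(x)),
\]
the factor $n$ coming from bounding $|\sum_{i\ne 1}\omega_i D_{y_i}\phi|^2 \le (n-1)\abs{D_\tau\phi}^2$ on the radial derivative and $\abs{D_\tau\phi}^2$ on the remaining $n-1$ tangential directions. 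On the target face the surface measure relates to $d\omega$ by $dy=(\lambda/2)^{n-1}d\omega$, so
\begin{align*}
\int_{\Gamma_1^+}\abs{D\widehat\phi}^2\,dx
&\le n\,\Bigl(\tfrac{\lambda}{2}\Bigr)^{2}\Bigl(\tfrac{2}{\lambda}\Bigr)^{n-1}\!\!\int_0^{\lambda/2}\!\! s^{n-3}\,ds\,\int_{\text{face}}\abs{D_\tau\phi}^2\,dy\\
&= \frac{n\lambda}{2(n-2)}\int_{\text{face}}\abs{D_\tau\phi}^2\,dy.
\end{align*}
Summing over the $2n$ faces yields exactly \eqref{eq:A007}.

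The only real subtlety is the integrability at $s=0$, which requires $\int_0^{\lambda/2} s^{n-3}ds<\infty$, i.e. $n\ge 3$; this is why the lemma is stated under this hypothesis and is the main reason a separate argument will be needed for $n=2$ and $n=1$ faces later in the recursive construction. All $Q$-valued aspects are trivial here because $\widehat\phi$ is obtained by pre-composition only, so no selection, averaging, or linear combination in the target is needed.
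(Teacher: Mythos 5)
Your proposal is correct and is essentially the paper's own argument: the paper also takes the $0$-homogeneous ($\ell^\infty$-radial) extension $\widehat{\phi}(x)=\phi\bigl(\frac{x}{\abs{x}_\infty}\bigr)$ on $[-1,1]^n$ after rescaling, which is exactly your projection from the center $c$, and notes that $n\ge 3$ makes the gradient square-integrable. Your cone-by-cone computation simply spells out the ``direct computation'' the paper leaves implicit, and it reproduces the constant $\frac{n}{2(n-2)}\lambda$ correctly.
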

\begin{proof}
By a simple scaling argument it is sufficient to prove the lemma for $F=[-1,1]^n$. Since $n\ge 3$ the $0$-homogeneous extension $\widehat{\phi}(x) = \phi\left(\frac{x}{\abs{x}_\infty}\right)$ belongs to $W^{1,2}(F,\A_Q(\R^m))$. Direct computations provide the bound \eqref{eq:A005}. \eqref{eq:A006} is clearly satisfied.
\end{proof}
The crucial point is to find a "version" of Lemma \ref{lem:A002} for $n=2$. The first step is the replacement suggested by C.~De Lellis.
\begin{lemma}\label{lem:A003}
Given $F=z+[0, \lambda]^2$, a $2$-dimensional cube of side length $\lambda$ and $\phi\in W^{1,2}(\partial F, \A_Q(\R^m))$ then there is an extension $\widehat{\phi}\in W^{1,2}(F,\A_Q(\R^m))$ with the property that
\begin{align}
\int_{F} \abs{D\widehat{\phi}}^2 \le 3Q\, \lambda \int_{\partial F} \abs{D_\tau \phi}^2 \label{eq:A009}\\
\G(\widehat{\phi}(x),\widehat{\phi}(y))^2 \le \pi Q^2\, \lambda \int_{\partial F} \abs{D_\tau \phi}^2.\label{eq:A010}
\end{align}
\end{lemma}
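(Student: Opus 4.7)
The plan is to mimic Luckhaus's two-dimensional construction, replacing the one-dimensional linear interpolation in \cite{Luckhaus} (or the averaging step of Lemma~E.2 in \cite{Lellis err}) by a $Q$-valued variant that preserves both the energy bound \eqref{eq:A009} and the oscillation bound \eqref{eq:A010}.

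First I would reduce to the unit square by scaling. A dilation $x \mapsto z+\lambda x$ leaves the $2$-dimensional Dirichlet integral invariant and rescales the $1$-dimensional tangential energy on $\partial F$ by $\lambda^{-1}$, so it suffices to produce, for $F=[0,1]^2$ and arbitrary $\phi \in W^{1,2}(\partial F,\A_Q(\R^m))$, an extension $\widehat\phi$ with
\[
\int_F \abs{D\widehat\phi}^2 \le 3Q \int_{\partial F}\abs{D_\tau\phi}^2, \qquad \G(\widehat\phi(x),\widehat\phi(y))^2 \le \pi Q^2 \int_{\partial F}\abs{D_\tau\phi}^2.
\]
The key preliminary observation is the one-dimensional Sobolev embedding $W^{1,2}(\partial F,\A_Q(\R^m))\hookrightarrow C^{1/2}(\partial F,\A_Q(\R^m))$, which gives the pointwise control $\G(\phi(p),\phi(q))^2 \le d_{\partial F}(p,q) \int_{\partial F}\abs{D_\tau\phi}^2 \le 2 \int_{\partial F}\abs{D_\tau\phi}^2$ for any $p,q\in\partial F$. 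This will eventually furnish the $L^\infty$-estimate on $\widehat\phi$.

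Next I would construct $\widehat\phi$ piecewise. Decompose $F$ into the four congruent triangles $T_1,T_2,T_3,T_4$ sharing the common vertex $M=(1/2,1/2)$, with $T_i$ attached to the edge $E_i$ of $\partial F$. On the sub-triangle $T_i'\subset T_i$ obtained by removing a thin neighborhood of $M$, define $\widehat\phi=\phi\circ\pi_i$, where $\pi_i\colon T_i'\to E_i$ is the orthogonal projection onto $E_i$; the change-of-variables formula then yields $\int_{T_i'}\abs{D\widehat\phi}^2 \le \tfrac12\int_{E_i}\abs{D_\tau\phi}^2$, so summing over $i$ already accounts for $\tfrac12\int_{\partial F}\abs{D_\tau\phi}^2$. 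On the remaining small central region and along the four interior edges of the triangulation (where two projections disagree), interpolate between the competing boundary values by a $Q$-valued path construction analogous to that of Lemma \ref{lem:A002}, using a homogeneous-degree-zero $\A_Q$-valued function whose values lie in the finite union of the traces $\phi(E_i)$ joined by geodesic $Q$-valued segments obtained from optimal matchings (which are well defined a.e.\ and are $W^{1,2}$ by the 1D Sobolev embedding applied to each branch).

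The key estimates then split into (i) the Dirichlet estimate on the central piece, where the would-be logarithmic divergence at $M$ is cut off because the diameter of the region shrinks proportionally to the distance at which we switch from the orthogonal projection to the central interpolation, and the $Q$-valued path has length controlled by $\G$-distance between adjacent corner values, each bounded by the 1D Sobolev inequality; and (ii) the oscillation estimate, which follows because the image of $\widehat\phi$ lies in $\phi(\partial F)$ together with $Q$-valued geodesic segments of $\G$-length at most the maximum oscillation $\bigl(2\int_{\partial F}\abs{D_\tau\phi}^2\bigr)^{1/2}$ on $\partial F$, and the passage through up to $Q$ branches in an optimal matching accounts for the extra $Q$-factor producing $\pi Q^2$. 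I expect the main obstacle to be the careful book-keeping of constants for the interpolation near $M$: ensuring that the energy contribution from the central patch does not exceed $CQ\int_{\partial F}\abs{D_\tau\phi}^2$ and that continuity across all interior edges of the triangulation is maintained requires choosing the cutoff radius and the matching permutation consistently across the four triangles.
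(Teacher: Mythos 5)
Your plan breaks down at the two places where you actually have to glue competing values, and both failures are precisely the reason the paper does something different here. First, along the interior diagonals you propose to interpolate between the two projected traces ``by geodesic $Q$-valued segments obtained from optimal matchings''. A pointwise optimal matching between two $W^{1,2}$ $Q$-valued traces does not produce a $W^{1,2}$ (or even measurable/continuous, in general) interpolant as the base point varies: the optimal permutation can jump discontinuously, and $\A_Q(\R^m)$ has no linear structure that would make ``linear interpolation between two functions'' well behaved. This is exactly why Lemma \ref{lem:A004} in the paper only uses linear interpolation in the fiber over finitely many fixed base points $a_k$, never over a continuum; and it is also why invoking such a bridge as a black box is essentially circular, since producing an interpolation of Luckhaus type with controlled energy is the very content of the lemma chain you are trying to prove, with the $2$-dimensional cube as the base case where the difficulty is concentrated. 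Second, your central patch around $M$ is to be filled by a ``homogeneous-degree-zero $\A_Q$-valued function'', but in two dimensions the $0$-homogeneous extension of $W^{1,2}$ boundary data is not in $W^{1,2}$: the radial integral $\int_0^\rho r^{-1}\,dr$ diverges logarithmically at the center no matter how small the patch is, so ``shrinking the region'' does not cut off anything. This is why Lemma \ref{lem:A002} is stated only for $n\ge 3$. Finally, even granting a construction, your argument does not produce the stated constants: the factor $Q$ in \eqref{eq:A009} and $\pi Q^2$ in \eqref{eq:A010} come from a specific mechanism, not from generic bookkeeping.

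For comparison, the paper's proof avoids interpolation altogether: after scaling and transferring the problem from the square to the disk via the bilipschitz map of Remark \ref{rem:A002}, it decomposes the boundary datum on $\Sp^1$ into irreducible pieces $\varphi(x)=\sum_j\sum_{z^{Q_j}=x}\llbracket\tilde g_j(z)\rrbracket$ with single-valued $\tilde g_j\in W^{1,2}(\Sp^1,\R^m)$ (a genuinely one-dimensional/planar device, \cite[Proposition 1.5]{Lellis}), extends each sheet $\tilde g_j$ harmonically to the disk, and reassembles. The energy bound \eqref{eq:A009} then follows from $\int_{B_1}\abs{Dg_j}^2\le\int_{\Sp^1}\abs{D_\tau\tilde g_j}^2$ together with $\int_{\Sp^1}\abs{D_\tau\varphi}^2=\sum_j Q_j^{-1}\int_{\Sp^1}\abs{D_\tau\tilde g_j}^2$, which is where the factor $Q$ arises, and the oscillation bound \eqref{eq:A010} follows from the maximum principle for the harmonic sheets plus the one-dimensional Sobolev estimate, which is where $\pi Q^2$ arises. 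If you want to salvage a projection-plus-gluing construction, you would have to replace the pointwise optimal-matching interpolation by an intrinsically well-defined device (e.g.\ the irreducible decomposition, or Almgren's $\boldsymbol{\xi}$ and $\boldsymbol{\rho}$ as in Lemma \ref{lem:3.101}), and replace the $0$-homogeneous filling of the central patch by something square-integrable; at that point you are back to one of the known proofs.
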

\begin{proof}
By scaling it is sufficient to prove it for $F=[-1,1]^2$. Furthermore using $\varphi=\phi\circ G, \widehat{\phi}=\widehat{\varphi}\circ H$ and the estimates \eqref{eq:A005}, \eqref{eq:A006} for $n=2$ we can show the existence of an extension $\widehat{\varphi}$ from $\Sp^1$ to the disk $B_1$, that satisfies
\begin{align}
\int_{B_1} \abs{D\widehat{\varphi}}^2 \le Q\, \int_{\Sp^1} \abs{D_\tau \varphi}^2 \label{eq:A011}\\
\G(\widehat{\varphi}(x),\widehat{\varphi}(y))^2 \le \pi Q^2\, \int_{\Sp^1} \abs{D_\tau \varphi}^2.\label{eq:A012}
\end{align}
The energy bound \eqref{eq:A011} is derived in Proposition 3.10 in \cite{Lellis} as the crucial estimate to establish the optimal H\"older continuity for Dirichlet minimizers in the interior. Although the competitor constructed there satisfies the $L^\infty$-bound it is not stated. Therefore we present the complete construction. Recall that for a given $\tilde{f} \in W^{1,2}(\Sp^{1}, \R^m)$, single valued, there exists a unique harmonic extension $f \in W^{1,2}(B_1, \R^m)$ ( $\Delta f =0$) with $f= \tilde{f}$ on $\Sp^1$ and it satisfies
\begin{equation}\label{eq:A013}
\int_{B_1} \abs{Df}^2 \le \int_{\Sp^1} \abs{D_\tau \tilde{f}}^2
\end{equation}
and due to the maximum principle for subharmonic functions and $1$-dimensional calculus 
\begin{equation}\label{eq:A014}
\abs{f(x) - f(y)}^2 \le \sup_{x,y \in \Sp^1} \abs{\tilde{f}(x)-\tilde{f}(y)}^2 \le \pi \int_{\Sp^1} \abs{D_\tau \tilde{f}}^2.
\end{equation}
Now let be $\varphi \in W^{1,2}(\Sp^1, \A_Q(\R^m))$ given, as shown in \cite[Proposition 1.5]{Lellis} there is an irreducible decomposition $\varphi(x) = \sum_{j=1}^J \sum_{\substack{z \in \C\\ z^{Q_j} =x }} \llbracket \tilde{g}_j(z) \rrbracket$ for all $x \in \Sp^1$, functions $\tilde{g}_j \in W^{1,2}(\Sp^1, \R^m)$ and $\sum_{j=1}^J Q_j =Q$. To every $\tilde{g}_j$ let $g_j \in W^{1,2}(B_1, \R^m)$ be the harmonic extension, then set 
\[ \widehat{\varphi}(x) = \sum_{j=1}^J \sum_{\substack{z \in \C\\ z^{Q_j} =x }} \llbracket g_j(z) \rrbracket \text{ for } x \in B_1.\]
Direct computations, compare \cite[Lemma 3.12]{Lellis} and \eqref{eq:A013} gives \eqref{eq:A011}:
\begin{align*}
\int_{B_1} \abs{D\widehat{\varphi}}^2 &= \int_{B_1} \sum_{j=1}^J \abs{Dg_j}^2 \le \sum_{j=1}^J \int_{\Sp^1} \abs{D_\tau \tilde{g}_j}^2\\ &\le Q  \sum_{j=1}^J \frac{1}{Q_j} \int_{\Sp^1} \abs{D_\tau \tilde{g}_j}^2 = Q \int_{\Sp^1}\abs{D_\tau \varphi}^2.  
\end{align*}
Furthermore let $x=r\exp(i\alpha)$ then \[\widehat{\varphi}(x)= \sum_{j=1}^J \sum_{l=0}^{Q_j-1} \llbracket g_j\left( r^{\frac{1}{Q_j}} \, e^{i \frac{\alpha}{Q_j} +il\frac{2\pi}{Q_j}} \right)\rrbracket\]
similar for $y=s\exp(i\beta)$, hence applying \eqref{eq:A014} gives \eqref{eq:A012}
\begin{align*}
\G(\widehat{\varphi}(x), \widehat{\varphi}(y))^2 &\le \sum_{j=1}^J \sum_{l=0}^{Q_j-1} \Abs{ g_j\left( r^{\frac{1}{Q_j}} \, e^{i \frac{\alpha}{Q_j} +il\frac{2\pi}{Q_j}} \right) - g_j\left( s^{\frac{1}{Q_j}} \, e^{i \frac{\beta}{Q_j} +il\frac{2\pi}{Q_j}} \right)}^2 \\
&\le \pi \sum_{j=1}^J \sum_{l=0}^{Q_j-1} \int_{\Sp^1} \abs{D_\tau \tilde{g}_j}^2 \le \pi Q^2\, \int_{\Sp^1} \abs{D_\tau \varphi}^2.
\end{align*}
\end{proof}

Although $\A_Q(\R^m)$ is not a linear space we will use the following terminology.  A map $\phi: [a,b] \to \A_Q(\R^m)$ is said to be linear, a linear interpolation, between two points $S=\sum_{l=1}^Q \llbracket s_l \rrbracket, T=\sum_{l=1}^Q \llbracket t_l \rrbracket \in \A_Q(\R^m)$ on the interval $[a,b]$ if there exists $\sigma \in \mathcal{P}_Q$ such that
\begin{align*}
\G(S,T)^2 &= \sum_{l=1}^Q \abs{s_l - t_{\sigma(l)}}^2\\
\phi(t) &= \sum_{l=1}^Q \llbracket \frac{b-t}{b-a} s_l + \frac{t-a}{b-a} t_{\sigma(l)} \rrbracket.
\end{align*}
Furthermore one has $\int_a^b \abs{D\phi}^2 = \frac{\G(S,T)^2}{b-a}$ and to any two points $S,T \in \A_Q(\R^m)$ and an interval $[a,b]$ given there exists at least one linear interpolation. (It may not be unique.)

\begin{lemma}\label{lem:A004}
Suppose $\phi \in  W^{1,2}(\partial (F\times [0,\lambda]), \A_Q(\R^m))$, $F=[a,b]$ a $1$-dimensional face of length $\lambda=b-a$  is given and $\epsilon= \frac{\lambda}{l}, l \in \N$. Furthermore $\phi$ satisfies the following:
\begin{align*}
&t\mapsto \phi(a,t), \phi(b,t) \text{ are linear between $U(a),V(a)$ and $U(b),V(b)$};\\
&\int_{F}  \abs{D_\tau U}^2 + \abs{D_\tau V}^2 + \frac{\G(U,V)^2}{\epsilon^2} = K^2.
\end{align*}
where $U(x)=\phi(x,0), V(x)=\phi(x,\lambda) \in W^{1,2}(F, \A_Q(\R^n)$
Then there exists an extension $\widehat{\phi} \in W^{1,2}(F\times[0,\lambda],\A_Q(\R^m))$ satisfying 
\begin{align}\label{eq:A015}
&\int_{F\times [0,\lambda]} \abs{D\widehat{\phi}}^2 \le 15Q\, \lambda K^2;\\ \label{eq:A016}
&\dist(\widehat{\phi(x,t)}, U(F)\cup V(F))^2 \le 5 \pi Q^2\, \epsilon K^2 \quad \forall (x,t) \in F\times[0,\lambda].
\end{align}
\end{lemma}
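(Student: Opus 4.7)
The plan is to follow the classical Luckhaus--Moser grid construction, using Lemma~\ref{lem:A003} as the intrinsic $Q$-valued replacement for the 2D linear interpolation of the single-valued case. Partition $F\times[0,\lambda]$ into $l^2$ small squares $F_{j,k}=[x_j,x_{j+1}]\times[t_k,t_{k+1}]$ of side $\epsilon$, with $x_j=a+j\epsilon$ and $t_k=k\epsilon$ ($j,k=0,\dotsc,l$). A one-dimensional oscillation estimate applied to the nonnegative $W^{1,1}$ function $x\mapsto\G(U(x),V(x))^2$, combined with the hypotheses $\int_F\G(U,V)^2\le\epsilon^2K^2$ and $\int_F(\abs{D_\tau U}^2+\abs{D_\tau V}^2)\le K^2$, will yield the pointwise bound
\[
\G(U(x),V(x))^2\le C\,\epsilon\,K^2\quad\text{for every }x\in F,
\]
which is the key input for the $L^\infty$ bound \eqref{eq:A016}.

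Next I define $\widehat\phi$ on the 1-skeleton of the grid. For each $j$, let $W_j\in W^{1,2}([0,\lambda],\A_Q(\R^m))$ be the linear interpolation from $U(x_j)$ to $V(x_j)$ realized by some permutation $\sigma_j\in\mathcal{P}_Q$ attaining $\G(U(x_j),V(x_j))$; for $j=0,l$ I choose $\sigma_j$ so that $W_j$ coincides with the prescribed linear maps $\phi(a,\cdot)$ and $\phi(b,\cdot)$ respectively. Set $\widehat\phi(x_j,\cdot)=W_j$ along every vertical grid line, $\widehat\phi(\cdot,0)=U$ and $\widehat\phi(\cdot,\lambda)=V$ on the distinguished rows, and on each interior horizontal edge $[x_j,x_{j+1}]\times\{t_k\}$ ($0<k<l$) the linear interpolation in $x$ between $W_j(t_k)$ and $W_{j+1}(t_k)$, matched by any permutation realizing the corresponding $\G$. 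Applying Lemma~\ref{lem:A003} separately on each of the $l^2$ small squares then extends $\widehat\phi$ to the interior, producing an element of $W^{1,2}(F\times[0,\lambda],\A_Q(\R^m))$ satisfying \eqref{eq:A002}.

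The edge energies are estimated as follows. The top and bottom rows contribute $\int_{F_j}\abs{D_\tau U}^2$ and $\int_{F_j}\abs{D_\tau V}^2$; each interior horizontal edge contributes at most $C\int_{F_j}(\abs{D_\tau U}^2+\abs{D_\tau V}^2)$ via the convexity bound $\abs{(1-s)a+sb}^2\le(1-s)\abs a^2+s\abs b^2$ together with the fundamental theorem; each vertical segment contributes $\G(U(x_j),V(x_j))^2/(\epsilon l^2)\le CK^2/l^2$ by the pointwise bound above. By \eqref{eq:A009}, the interior energy on each small square is at most $3Q\epsilon$ times its edge energy, so summing over the $l^2$ squares (with each interior edge shared by at most two squares) yields \eqref{eq:A015}.

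For \eqref{eq:A016}, the key observation is that each of the four edges of every small square carries energy at most $CK^2$ \emph{independently of $Q$}, so Lemma~\ref{lem:A003}'s estimate \eqref{eq:A010} applied square by square gives $\G(\widehat\phi(x,t),\widehat\phi(\partial F_{j,k}))^2\le CQ^2\epsilon K^2$ inside each square; combining with the fact that every value of $\widehat\phi$ on the 1-skeleton already lies within $\sqrt{C\epsilon K^2}$ of $U(F)\cup V(F)$ (by the pointwise bound of the first paragraph together with the convex-hull property of the linear interpolations used on the edges) produces \eqref{eq:A016}. The hard part will be the permutation bookkeeping behind the convexity estimate for interior horizontal edges: there is no canonical sheet matching simultaneously compatible with both $U$ and $V$ across $x_j\to x_{j+1}$, so the convexity bound conceals a careful argument that $\sigma_j$ and $\sigma_{j+1}$ can be taken essentially compatible up to an error controlled by the smallness of $\G(U,V)$ from the first step; this is precisely the reason for deferring the genuine 2D interpolation to Lemma~\ref{lem:A003} on squares of side $\epsilon$, where sheets are rearranged coherently through the harmonic extension of irreducible components used in its proof.
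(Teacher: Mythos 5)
Your construction has a genuine gap at the step where you put data on the \emph{interior horizontal edges} of your $\epsilon\times\epsilon$ grid. You define the vertical lines $\{x_j\}\times[0,\lambda]$ by linear interpolations $W_j$ realized by permutations $\sigma_j$, and then claim that the edge $[x_j,x_{j+1}]\times\{t_k\}$, interpolating between $W_j(t_k)$ and $W_{j+1}(t_k)$, carries energy at most $C\int_{[x_j,x_{j+1}]}\left(\abs{D_\tau U}^2+\abs{D_\tau V}^2\right)$ ``via the convexity bound''. In $\A_Q(\R^m)$ this convexity estimate is false in general: the optimal matchings $\sigma_j$ and $\sigma_{j+1}$ (and the matching realizing $\G(U(x_j),U(x_{j+1}))$) are independent permutations, and there is no triangle/convexity inequality giving $\G(W_j(t),W_{j+1}(t))\lesssim \G(U(x_j),U(x_{j+1}))+\G(V(x_j),V(x_{j+1}))$. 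When sheets of $U$ or $V$ nearly collide the optimal matching can switch branches between $x_j$ and $x_{j+1}$ even though $U,V$ vary slowly, and then $\G(W_j(t),W_{j+1}(t))$ is only of the order of $\G(U,V)$ itself, i.e.\ of order $\sqrt{\epsilon}\,K$ by your pointwise estimate. The mitigation you propose (``$\sigma_j$ and $\sigma_{j+1}$ essentially compatible up to an error controlled by the smallness of $\G(U,V)$'') is therefore not enough quantitatively: an error of size $\sqrt{\epsilon}K$ on an edge of length $\epsilon$ contributes edge energy of order $K^2$ per edge, so each small square can have boundary energy $\sim K^2$ rather than a \emph{local} quantity; applying \eqref{eq:A010}--\eqref{eq:A009} square by square and summing over the $l^2$ squares then gives a total of order $Q\,\epsilon\, l^2 K^2= Q\,\lambda\, l\, K^2$, which misses \eqref{eq:A015} by the factor $l=\lambda/\epsilon$. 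So the ``permutation bookkeeping'' you defer is not a technicality; it is exactly where the argument breaks.

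The paper sidesteps this entirely by never subdividing in the $t$-direction: it splits $F\times[0,\lambda]$ only into the $l$ thin strips $[a_k,a_{k+1}]\times[0,\lambda]$ of width $\epsilon$, inserts linear interpolations only on the vertical lines $x=a_k$ (so no two independently chosen matchings ever have to be compared along an edge), and applies Lemma \ref{lem:A003} once per strip after the anisotropic rescaling $(x,t)\mapsto\phi(a_k+\tfrac{x}{l},t)$ that turns the strip into a $\lambda\times\lambda$ square. The point of this rescaling is that the vertical-edge contributions then enter the boundary energy divided by $\lambda$ rather than by $\epsilon$: together with the one-dimensional sup estimate $\G(U(a_j),V(a_j))^2\le 4\epsilon K^2$ (the same estimate you prove in your first paragraph) one gets boundary energy $\le \tfrac{5}{l}K^2$ per rescaled square, and \eqref{eq:A015}, \eqref{eq:A016} follow directly from \eqref{eq:A009} and \eqref{eq:A010}. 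If you want to rescue your own scheme you would have to construct the horizontal edge data by some matching chosen \emph{coherently in $x$} (not edgewise optimal matchings), which is essentially a new selection argument; as written, the proposal does not prove the lemma.
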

\begin{proof}
We construct $\widehat{\phi}$ applying the previous extension lemma \ref{lem:A003} several times. Set $a_k=a+ k \epsilon$ for $k=0, \dotsc, l$, i.e. $a_0=a, a_l=b$ and for every $k=1, \dotsc, l-1$ define $t \mapsto \phi(a_k, t)$ to be a linear interpolation between $U(a_k), V(a_k)$. \\
Pick any $k\in \{0, \dotsc, l-1 \}$ then $\phi$ is now already defined on $\partial ([a_{k}, a_{k+1}]\times [0, \lambda])$. We may apply lemma \ref{lem:A003} to 
\[ (x,t) \in [0,\lambda]^2 \mapsto \phi(a_{k} + \frac{x}{l}, t)\]
and obtain an extension $\phi_k \in W^{1,2}([0,\lambda]^2, \A_Q(\R^m))$. By $1$-dimensional calculus one has for $f \in W^{1,2}([c,d], \R)\subset C^{0,\frac{1}{2}}([c,d])$, that
\[\sup_{c\le x \le d} f(x)^2 \le 2\abs{d-c} \int_c^d \abs{f'}^2 + \frac{2}{\abs{d-c}} \int_c^d f^2 \]
and therefore
\[ \sum_{j=k}^{k+1}\G(U(a_j),V(a_j))^2 \le 4\epsilon \int_{a_k}^{a_{k+1}} \abs{D_\tau U}^2+ \abs{D_\tau V}^2 + \frac{4}{\epsilon} \int_{a_k}^{a_{k+1}} \G(U,V)^2 = 4 \epsilon K^2\]. 
This gives ($\frac{\epsilon}{\lambda}= \frac{1}{l}$)
\begin{align*}
\int_{\partial [0,\lambda]^2} \abs{D_\tau \phi_k}^2 &= \frac{1}{l} \left(\int_{a_k}^{a_{k+1}} \abs{D_\tau U}^2 + \abs{D_\tau V}^2 \right)+ \sum_{j=k}^{k+1}\frac{\G(U(a_j),V(a_j))^2}{\lambda}\\
&\le \frac{5}{l} K^2. 
\end{align*}
Finally we define
\[ \widehat{\phi}(x,t) = \phi_k(l(x-a_k),t) \text{ for } (x,t) \in [a_k, a_{k+1}] \times [0,\lambda]. \]
Due to lemma \ref{lem:A003} we found
\begin{align}\label{eq:A017}
\int_{[a_k,a_{k+1}]\times [0,\lambda]} \abs{D\widehat{\phi}}^2 \le l \int_{[0,\lambda]^2} \abs{D\phi_k}^2 \le 3Q\, l\lambda \int_{\partial[0,\lambda]^2 } \abs{D_\tau \phi_k}^2 \le 15Q\, \lambda K^2 \\ \nonumber
\G(\widehat{\phi}(x,t),U(x))^2= \G(\phi_k(y,t),\phi_k(y,0))^2 \le 5\pi Q^2\, \epsilon K^2\quad \forall x=a_k+\frac{y}{l}, y \in [0,\lambda]. 
\end{align}
Since all sets $[a_k, a_{k+1}[\times [0,\lambda]$ are disjoint we obtain a well defined extension $\widehat{\phi}$ applying the above procedure for every $k=0, \dotsc, l-1$. Furthermore adding the estimate \eqref{eq:A017} for $k=0, \dotsc, l-1$ we obtain \eqref{eq:A016} proving the lemma.
\end{proof}

The choice $l=1$ in lemma \ref{lem:A004} reduces it back to lemma \ref{lem:A003}. This corresponds to C.~De Lellis proposal in \cite{Lellis err} to choose the "harmonic" extension. This is in general not a good idea for the $L^\infty$-bound. This can be seen in the following example.

\begin{example}\label{ex:A003}
Let $F=[0,1], (\lambda=1)$, $M \in \N$ and $\phi_M(x,0)=\phi(x,\lambda) = M \Bigl( \cos(2\pi M x), \sin(2\pi M x)\Bigr) \in W^{1,2}(F, \R^2)$, $\phi(0,t)=\phi(\lambda,t)\equiv\Bigl(0,1\Bigr)$. S.~Luckhaus suggests the extension $\widehat{\phi}_L(x,t)= \phi(x,0)$ for all $t \in [0,\lambda]$ that satisfies $\dist(\widehat{\phi}_L(x,t), \phi(F,0))^2=0$ for all $(x,t)\in F\times [0,\lambda]$.
The harmonic extension would be \[\widehat{\phi}_H(x,t) = \frac{\cosh(2\pi M (t-\frac{1}{2}))}{\cosh(\pi M)} \phi(x,0);\] that satisfies now
\[\inf_{x \in F} \abs{\phi_H(x,\frac{1}{2})- \phi(x,0)} \ge \abs{\phi(x,0)}- \abs{\phi_H(x, \frac{1}{2}} = M \left( 1- \frac{1}{\cosh(\pi M)}\right);\]
converging to $+\infty$ as $M \to \infty$.
\end{example}

\begin{proof}[Proof of Lemma \ref{lem:Luckhaus} ]
(Our presentation is close to the proof presented by R.~Moser in \cite{Moser}.)\\

\emph{Part 1: decomposition $\G$ of the sphere using a Fubini-type argument}\\
It is useful to set up some terminology. $\frac{1}{L}\Z^N$ is a square lattice in $\R^N$ decomposing the cube $[-1,1]^N$ and ist boundary $\partial [-1,1]^N$ into congruent cubes of side length $\frac{1}{L}$ of dimension $N$ and $N-1$.  Let $\F_k$ denote the collection of all $k$-dimensional faces in the decomposition $\partial [-1,1]^N \cap \frac{1}{L} \Z^N$. We set $\G_k=\{ H(F) \colon F \in \F_k \}$, a collection of $k$-dimensional faces on the sphere $\Sp^{N-1}$. The number of $k$-dimensional faces $\sharp F_k = \sharp G_k$ is less then $2N$-times the number of $k$-dimensional faces in $[-1,1]^{N-1} \cap \frac{1}{L} \Z^{N-1}$, that is less than $(2L)^{N-1} \tbinom{N-1}{k}$, in total
\begin{equation}\label{eq:A018}
\sharp \F_k = \sharp G_k \le N 2^{N} L^{N-1} \tbinom{N-1}{k}.
\end{equation}
\emph{claim: } Let $f\in L^1(\Sp^{N-1}, \R_+)$ be given. Then there is a partition of $SO(N)$ into the set $\mathcal{O}^\text{good}$ of "good" and the set $\mathcal{O}^\text{bad}$ of "bad" matrices, defined as follows: $O\in \mathcal{O}^\text{good}$ if we have 
\begin{equation}\label{eq:A019}
\sum_{k=1}^{N-2} \frac{L^{N-1-k}}{\tbinom{N-1}{k}} \sum_{G \in \G_k} \int_G f(Ox) \,d\h^k(x) \le \frac{(N-2)2^{N}}{\theta w_N} \int_{\Sp^{N-1}} f \, d\h^{N-1}
\end{equation}
and $O \in \mathcal{O}^\text{bad}$ if instead 
\begin{equation}\label{eq:A019b}
\sum_{k=1}^{N-2} \frac{L^{N-1-k}}{\tbinom{N-1}{k}} \sum_{G \in \G_k} \int_G f(Ox) \,d\h^k(x) > \frac{(N-2)2^{N}}{\theta w_N} \int_{\Sp^{N-1}} f \, d\h^{N-1}.
\end{equation}
Furthermore one has $\mu(\mathcal{O}^\text{bad})< \theta$, where $\mu$ is the Haar measure on $SO(N)$.\\

This can be seen as follows: To any $x,x_0 \in \Sp^{N-1}$ there exists $O_0 \in SO(N)$ with $O_0x_0=x$ and by the invariance of the Haar measure under group action we have
\[ \int_{O \in SO(N)} f(Ox) \,d\mu(O) = \int_{SO(N)} f(OO_0x_0) \,d\mu(O) = \int_{SO(N)} f(Ox_0) \,d\mu(O).\]
The invariance of the Haussdorff measure under orthogonal transformations gives
\[ \int_{\Sp^{N-1}} f(O x) \,d\h^{N-1}(x) = \int_{\Sp^{N-1}} f(x) \,d\h^{N-1}(x). \]
Fubini's theorem with $\mu(SO(N))=1$ gives
\begin{align*} \int_{\Sp^{N-1}} f \,d\h^{N-1} &= \int_{\Sp^{N-1}} f(Ox) \,d\h^{N-1}(x) = \int_{SO(N)} \int_{\Sp^{N-1}} f(Ox) \,d\h^{N-1}(x) \,d\mu(O)\\ &= Nw_N \int_{SO(N)} f(Ox_0) \,d\mu(O).\end{align*}
We deduce
\begin{align}\nonumber
&\int_{SO(N)} \sum_{G \in \G_k} \int_G f(Ox) \,d\h^{k}(x) \,d\mu(O) = \sum_{G \in \G_k} \int_{SO(N)} f(Ox_0) \,d\mu(O) \h^k(G)\\ \label{eq:A020} 
&\le 2^N N \tbinom{N-1}{k} L^{N-1-k} \int_{SO(N)} f(Ox_0) \,d\mu(O) = \frac{2^N}{w_N} \tbinom{N-1}{k} L^{N-1-k} \int_{\Sp^{N-1}} f \,d\h^{N-1}. 
\end{align}
We used \eqref{eq:A018} and $\h^k(G)=\h^k(H(F)) \le \h^k(F) = L^{-k}$. This implies the claim $\mu(\mathcal{O}^\text{bad})< \theta$ because apply \eqref{eq:A020} for every $k$ and \eqref{eq:A019b} for every $O \in \mathcal{O}^\text{bad}$ to deduce
\begin{align*}
&\frac{\mu(\mathcal{O})}{\theta} \frac{(N-2)2^{N}}{w_N} \int_{\Sp^{N-1}} f \, d\h^{N-1} \\&< \int_{O \in \mathcal{O}} \sum_{k=1}^{N-2} \frac{L^{N-1-k}}{\tbinom{N-1}{k}} \sum_{G \in \G_k} \int_G f(Ox) \,d\h^k(x) \,d\mu(O) \le \frac{(N-2)2^{N}}{w_N} \int_{\Sp^{N-1}} f \, d\h^{N-1};
\end{align*}
i.e. $\mu(\mathcal{O}^\text{bad})< \theta$.\\

Given $u,v$ as assumed, set $\theta= \frac{1}{2}$ and $f_1= \abs{Du}^2 + \abs{Dv}^2 + \frac{\G(u,v)^2}{\epsilon^2}$, $f_2= \abs{u}^2 + \abs{v}^2$. The the claim states that if $\mathcal{O}_i^\text{good} \cup \mathcal{O}^\text{bad}_i=SO(N)$ are the related partition, there exists $O \in \mathcal{O}^\text{good}_1 \cap \mathcal{O}^\text{good}_2$ since $\mu(\mathcal{O}^\text{bad}_1 \cup \mathcal{O}^\text{bad}_2)<1$. Hence we have for any $k=1, \dotsc , N-2$, $G \in \G_k$
\begin{align*}
&u\circ O \tr{G}, \;v\circ O \tr{G} \in W^{1,2}(G, \A_Q(\R^m))\\
&(u\circ O \tr{G})\tr{G'}= u\circ O\tr{G'} ,\; (v\circ O \tr{G})\tr{G'}= v\circ O\tr{G'} \quad \forall G' \in \G_{k-1}, G' \subset \partial G.
\end{align*}
We define $U(x)=u(OH(x)), V(x)=v(OH(x))$. Due to the choice of $O$ we have that for any $k=1, \dotsc , N-2$, $F \in \F_k$
\begin{align*}
&U\tr{F},\; V\tr{F} \in W^{1,2}(F, \A_Q(\R^m))\\
&(U\tr{F})\tr{F'}= U\tr{F'}, \;(V\tr{F})\tr{F'}= V\tr{F'} \quad \forall F' \in \F_{k-1}, F' \subset \partial F.
\end{align*}
Set $\tilde{f}_1= \abs{DU}^2 + \abs{DV}^2 + \frac{\G(U,V)^2}{\epsilon^2}$ and using remark \ref{rem:A002} we have for any $F \in \F_k$
\[ \int_F \tilde{f}_1 \,d\h^k \le \int_{G=H(F)} \left(\frac{\abs{x}_2}{\abs{x}_\infty}\right)^{k-1} f_1(Ox) \,d\h^{k}(x) \le N^{\frac{k-1}{2}} \int_G f_1(Ox) \,d\h^k(x). \]
so that 
\begin{equation}\label{eq:A021}
\sum_{k=1}^{N-2} \frac{L^{N-1-k}}{N^{\frac{k-1}{2}} \tbinom{N-1}{k}} \sum_{F \in \F_k} \int_F \tilde{f}_1 \,d\h^k \le \frac{(N-2)2^{N+1}}{w_N} \; K^2.
\end{equation}

\emph{Part 2: extensions of maps that are defined on the boundary of a $k$-dimensional cube $\partial F$ to its interior}\\
This is covered in the results of lemma \ref{lem:A002} and \ref{lem:A004}.\\

\emph{Part 3: recursive construction of $\phi$}\\
We define $\phi$ on $F\times [0,\lambda]\; \forall F \in \F_1$ using lemma \ref{lem:A004}, then recursively on $\{  F \times [0,\lambda] \colon F \in \F_2 \}, \{  F \times [0,\lambda] \colon F \in \F_3 \}, \dotsc, \{  F \times [0,\lambda] \colon F \in \F_{N-1} \}$ by lemma \ref{lem:A002}. In each step taking advantage of the fact that $\phi$ had already be defined on the boundary of $F \times [0,\lambda]$, with 
\begin{equation}\label{eq:A022}
\phi(x,0) = U(x), \; \phi(x,\lambda)= V(x)\quad \forall x \in F, F\in \F_k.
\end{equation}
Now we describe the construction in detail. ($D_\tau$ denotes the tangential differential with respect to the domain of integration, i.e. $\abs{D_\tau \phi}^2$ will be the Dirichlet energy with respect to $F\times [0,\lambda]$, $\abs{D_\tau U}^2 + \abs{D_\tau V}^2$ the Dirichlet energy with respect to a face $F$.):
Pick $z\in \F_0$, the set of all vertices,  define
\begin{equation}\label{eq:A023}
\phi(z,t)\in W^{1,2}(\{z\}\times [0,\lambda], \A_Q(\R^m))\; t\mapsto\phi(z,t) \text{ linear between } U(z), V(z).
\end{equation}
We proceed this way for all $z\in \F_0$: since $z \in \partial F'$ for some $F' \in \F_1$ and $W^{1,2}(F',\A_Q(\R^m))\subset C^{0,\frac{1}{2}}(F',\A_Q(\R^m))$ , $U(z), V(z)$ are defined. Furthermore all $\{z\}\times[0,\lambda]$ are disjoint so $\phi$ is welldefined on $\bigcup_{z\in \F_0} \{z\}\times[0,\lambda]$.

Pick $F \in \F_1$ then $\phi$ is already defined on $\partial\left(F\times [0,\lambda]\right)=F\times\{0,\lambda\} \cup \partial F \times [0,\lambda]$ taking into account \eqref{eq:A022} and \eqref{eq:A023}. We apply lemma \ref{lem:A002} to extend $\phi$ to $F\times[0,\lambda]$ with the estimates: $ \int_{F\times[0,\lambda]} \abs{D_\tau \phi}^2 \le 15 Q\, \lambda K^2_F$, $\dist(\phi(x,t), U(F)\cup V(F))^2 \le 15 Q^2\, \epsilon  K_F^2$  with $K_F^2= \left(\int_{F} \abs{D_\tau U}^2 +\abs{D_\tau V}^2 + \frac{\G(U,V)^2}{\epsilon^2} \, d\h^1 \right)$.
We can define $\phi$ for all $F \in \F_1$ since the interior of the sets $F\times[0,\lambda]$ are disjoint. Taking into account \eqref{eq:A021} we found (with $C_1\le \frac{2^{N+5}(N-1)^2}{w_N}$)
\begin{align}\label{eq:A024}
&\sum_{F \in \F_1} \int_{F\times [0,\lambda]} \abs{D_\tau \phi}^2 \le 15 Q\, \lambda \sum_{F \in \F_1} K_F^2 \le C_1Q\, \lambda^{3-N} K^2 \\ \label{eq:A025}
&\dist(\phi(x,t), U(F)\cup V(F))^2 \le C_1 Q^2 \,\epsilon \lambda^{2-N} K^2 \quad (x,t) \in \bigcup_{F\in \F_1} F\times[0,\lambda].
\end{align}
Pick $F\in \F_2$, then $\phi$ is defined on $\partial\left(F\times [0,\lambda]\right)=F\times\{0,\lambda\} \cup \partial F \times [0,\lambda]$, taking into account \eqref{eq:A022} and the previous step ($\partial F= \bigcup_{i=1}^4 F_i, F_i \in \F_1$). Hence $\phi$ can be extended to $F\times[0,\lambda]$ using lemma \ref{lem:A002} s.t. $\phi(x,t) \in \{ \phi(y,s) \colon (y,s) \in \partial (F\times[0,\lambda])\}$ and
\[\int_{F\times[0,\lambda]} \abs{D_\tau \phi}^2 \le \frac{3}{2}\lambda \left( \int_{F} \abs{D_\tau U}^2 + \abs{D_\tau V}^2 \,d\h^2 + \sum_{i=1}^4 \int_{F_i\times[0,\lambda]} \abs{D_\tau \phi}^2 \right).\] 
As before the interior of the sets $F\times[0,\lambda]$, $F\in\F_2$ are disjoint, so we can proceed this way for all of them and obtain a welldefined $\phi$ on $\bigcup_{F\in \F_2} F\times[0,\lambda]$. Summing the above estimate for all $F\in \F_2$, taking into account \eqref{eq:A021} and \eqref{eq:A024} we get for some constant $C_2$: 
\begin{equation*}
\sum_{F \in \F_2} \int_{F \times [0,\lambda]} \abs{D_\tau \phi}^2 
\le C_2Q\, \lambda^{4-N} K^2.
\end{equation*}
(For a given $F\in \F_k$ we have $\sharp \{ F'\in \F_{k+1} \colon F \subset \partial F \} \le 2 (N-1-k)$.)\\
We use the same method to define $\phi$ on $\{  F \times [0,\lambda] \colon F \in \F_3 \}, \dotsc, \{  F \times [0,\lambda] \colon F \in \F_{N-1} \}$. Each time we obtain the inequality 
\[ \sum_{F\in \F_k} \int_{F\times[0,\lambda]} \abs{D_\tau \phi}^2 \le C_kQ\, \lambda^{k+2-N} K^2. \]
For $k=N-1$ this is 
\begin{equation}\label{eq:A026} \int_{\partial[-1,1]^N \times [0,\lambda]} \abs{D_\tau \phi}^2 \le C_{N-1}Q\, \lambda K^2. \end{equation}
Applying lemma \ref{lem:A002} does not affect the $L^\infty$ bound, \eqref{eq:A025}.\\
Define $\varphi(x)=\varphi(r y) = \phi(G \circ O^t(y), 1-r) \in W^{1,2}(B_1\setminus B_{1-\lambda}, \A_Q(\R^m))$, with $r=\abs{x}, y=\frac{x}{\abs{x}}$. One checks that $\phi$ satisfies \eqref{eq:A002}. \eqref{eq:A026} combined with remark \ref{rem:A002} gives the energy bound \eqref{eq:A003}:
\[ \int_{B_1\setminus B_{1-\lambda}} \abs{D\varphi}^2 \le 4 \int_{\partial[-1,1]^N \times [0,\lambda]} \abs{D\phi}^2 \le CQ\, \lambda K^2.\]
Finally the preserved $L^\infty$ bound \eqref{eq:A025} corresponds with \eqref{eq:A004}.
\end{proof}
\end{appendix}


\begin{thebibliography}{99}

\bibitem{Brothers}
\textit{Some open problems in geometric measure theory and its applications
 suggested by participants of the 1984 AMS summer institute.}, edited by J. E. Brothers, Proc. Sympos. Pure Math. \textbf{44}, Amer. Math. Soc., Providence, RI,  1986
 	
\bibitem{Almgren}
 F.J.~Almgren, \textit{Almgren's big regularity paper. Q-valued functions minimizing Dirichlet's integral and the regularity of area-minimizing rectifiable currents up to codimension 2.}, World Scientific Monograph Series in Mathematics, 1. World Scientific Publishing Co., Inc., River Edge, NJ,  (2000). xvi+955 pp. ISBN: 981-02-4108-9
 

\bibitem{Almgren2}
F.J.~Almgren, \textit{Q valued functions minimizing Dirichlet's integral and the regularity of area minimizing rectifiable currents up to codimension two} Bull. Amer. Math. Soc. (N.S.)  \textbf{8}  (1983),  no. 2, 327--328


\bibitem{dePauw}
P.~Bouafia, T.~De Pauw, J.~Goblet, \textit{Existence of p harmonic mutliple valued maps into a separable Hilbert space}, preprint




\bibitem{Federer}
H.~Federer, \textit{Geometric measure theory},
Die Grundlehren der mathematischen Wissenschaften, Band 153 Springer-Verlag New York Inc., New York  1969 xiv+676 pp.
%
%
%
%
%
%
%
\bibitem{H1}
J.~Hirsch, \textit{Boundary regularity of Dirichlet minimizing $Q$-valued functions}, (2013), in preparation
 
 
\bibitem{Lellis select}
C.~De Lellis, C.R.~Grisanti, P.~Tilli, \textit{Regular selections for multiple-valued functions}, Ann. Mat. Pura Appl. (4)  \textbf{183}  (2004),  no. 1, 79--95.

\bibitem{Lellis09}
C.~De Lellis,M.~Focardi,E.~Spadaro, \textit{Lower semicontinuous functionals for Almgren's multiple valued functions}, Ann. Acad. Sci. Fenn. Math.  \textbf{36}  (2011),  no. 2, 393--410.

\bibitem{Lellis}
C.~De Lellis, E.N.~Spadaro, \textit{Q-valued functions revisited}, Memoirs of the AMS \textbf{211} (2011), no. 991

\bibitem{Lellis err}
C.~De Lellis, \textit{Errata to "Q-valued functions revisited"}, http://user.math.uzh.ch/delellis/index.php?id=publications

\bibitem{Lellis Lp}
 C.~De Lellis, E.~Spadaro, \textit{Regularity of area-minimizing currents I: Gradient $L^p$ estimates.}, to appear
 
\bibitem{Lellis note}
 C.~De Lellis, \textit{Almgren's Q-valued functions revisited}, Proceedings of the International Congress of Mathematicians. Volume III, (2010), 1910--1933, Hindustan Book Agency, New Delhi

\bibitem{Goblet06},
J.~Goblet,\textit{A selection theory for multiple-valued functions in the sense of Almgren}, Ann. Acad. Sci. Fenn. Math.  \textbf{31}  (2006),  no. 2, 297--314.

\bibitem{Goblet09}
J.~Goblet, \textit{Lipschitz extension of multiple Banach-valued functions in the sense of Almgren}, Houston J. Math. \textbf{35}  (2009),  no. 1, 223--231.

\bibitem{GobletZhu08}
J.~Goblet, W.~Zhu, \textit{Regularity of Dirichlet nearly minimizing multiple-valued functions}, J. Geom. Anal.  \textbf{18}  (2008),  no. 3, 765--794

\bibitem{Luckhaus}
S.~Luckhaus, \textit{Partial H\"older continuity for minima of certain energies among maps into a Riemannian manifold.}, Indiana Univ. Math. J. \textbf{37},  (1988), No.2, 349-368

\bibitem{Moser}
R.~Moser, \textit{Partial regularity for harmonic maps and related problems.}, Hackensack, NJ: World Scientific (ISBN 978-981-256-085-8/hbk; 978-981-270-131-2/ebook) (2005)

\bibitem{Mattila83}
P.~Mattila, \textit{Lower semicontinuity, existence and regularity theorems for elliptic variational integrals of multiple valued functions}, Trans. Amer. Math. Soc.  \textbf{280}  (1983),  no. 2, 589--610.

%

\bibitem{Schoen}
R.~Schoen, K.~Uhlenbeck Schoen, \textit{A regularity theory for harmonic maps.}, 
 J. Differential Geom.  \textbf{17}  (1982),  No. 2, 307--335.
 
\bibitem{Simon2}
L.~Simon, \textit{Lectures on geometric measure theory} Proceedings of the Centre for Mathematical Analysis, Australian National University, 3. Australian National University, Centre for Mathematical Analysis, Canberra,  (1983), vii+272 pp. ISBN: 0-86784-429-9 
 
\bibitem{Simon}
L.~Simon, \textit{Theorems on regularity and singularity of energy minimizing maps. Based on lecture notes by Norbert Hungerbühler.} Lectures in Mathematics ETH Z\"urich. Birkh\"auser Verlag, Basel,  1996. viii+152 pp. ISBN: 3-7643-5397-X

\bibitem{Spadaro}
E.~Spadaro, \textit{Complex varieties and higher integrability of Dir-minimizing Q-valued functions}, Manuscripta Math. \textbf{132}  (2010),  no. 3-4, 415--429.
%
%
%
\bibitem{Zhu}
W.~Zhu, \textit{Two-dimensional multiple-valued Dirichlet minimizing functions}, Comm. Partial Differential Equations \textbf{33} (2008), 1847 -1861

\bibitem{Zhu06An}
W.~Zhu, \textit{Analysis on Metric Space Q}, arXiv (2006)

\bibitem{Zhu06Fre}
W.~Zhu, \textit{A Theorem on Frequency Function for Multiple-Valued Dirichlet Minimizing Functions}, arXiv (2006)

\bibitem{Zhu06reg}
W.~Zhu, \textit{A regularity theory for multiple-valued Dirichlet minimizing maps}, arXiv (2006)

\bibitem{Zhu06flow}
W.~Zhu, \textit{An Energy Reducing Flow for Multiple-Valued Functions}, arXiv (2006)

\end{thebibliography}
\end{document}